\documentclass[reqno,11pt]{amsart}
\usepackage{amsfonts, amsmath, amssymb, amsthm,   color}
 \usepackage{amsfonts}
\usepackage{amsmath}
\usepackage{bm}
\usepackage{enumitem}
\allowdisplaybreaks[1]

\def\bbm[#1]{\mbox{\boldmath $#1$}}

\newtheorem{thm}{Theorem}[section]
\newtheorem{lemma}[thm]{Lemma}
\newtheorem{prop}[thm]{Proposition}

\newtheorem{rmk}[thm]{Remark}

\theoremstyle{definition}
\numberwithin{equation}{section}

  \newcommand{\C}{\mathbb{C}}
\newcommand{\cal}{\mathcal }
  
  \newcommand{\N}{\mathbb{N}}

\newcommand{\rr}{\mathbb{R}}
\newcommand{\intr}{\int_{\R^2}}
\newcommand{\R}{\mathbb{R}}

\newcommand{\de}{\delta}

\newcommand{\la}{\lambda}
\newcommand{\into}{\int_{B_1}}

\newcommand{\e}{\varepsilon}
\renewcommand{\(}{\left(}
\renewcommand{\)}{\right)}

\newcommand{\beq}{\begin{equation}}
\newcommand{\eeq}{\end{equation}}

\def\bbm[#1]{\mbox{\boldmath $#1$}}
\def\Re{{\rm Re}\,}
\def\Im{{\rm Im}}

\begin{document}

\title[Higher order criteria and sharpness]{Higher-order criteria and sharpness for blow-up at quantized singularities in Liouville equations}
\author[Teresa D'Aprile]{ Teresa D'Aprile}
\author[Juncheng Wei]{Juncheng Wei}
\author[Lei Zhang]{Lei Zhang} 
\address[Teresa D'Aprile] {Dipartimento di Matematica, Universit\`a di Roma ``Tor
Vergata", via della Ricerca Scientifica 1, 00133 Roma, Italy.}
\email{daprile@mat.uniroma2.it}
\address[Juncheng Wei]{Department of Mathematics \\ Chinese University of Hong Kong \\ Shatin, NT, Hong Kong}\email{wei@math.cuhk.edu.hk}

\address[Lei Zhang]{Department of Mathematics\\
        University of Florida\\
        1400 Stadium Rd\\
        Gainesville FL 32611}
\email{leizhang@ufl.edu}

\thanks{ T. D’Aprile acknowledges the MUR Excellence
Department Project MatMod@TOV awarded to the Department of Mathematics, University of Rome
Tor Vergata, CUP E83C23000330006.  The research of J. Wei is partially supported by NSERC of Canada. The research of  Lei Zhang is partially supported by a Simons Foundation Collaboration Grant.}

\begin{abstract}
This work is devoted to the study of blow-up phenomena for solutions of $$-\Delta v= \la V(x) |x|^{2N}e^v\;\hbox{ in } B_1,\quad	v=0 \;\hbox{ on }\partial B_1,$$  where $B_1$ denotes the unit ball in $\R^2$ centered at the origin, $V(x)$ is a positive  smooth  function, $N\in\N$  and $\la>0$ is a small parameter. 
 
The main goal is to determine conditions on the potential $V$ guaranteeing the existence of families of solutions that blow up at the origin 
 as $\la\to 0^+$. 
Moreover, we establish  higher-order criteria that distinguish between simple and non-simple blow-up behavior at 
$x=0$.

In particular, for $N-1\in2^{\N}$, we construct a non-simple blow-up sequence $v_k$, corresponding to coefficient functions $V_k$ whose derivatives vanish up to the maximal order allowed by the upper bound of Wei–Zhang, thereby proving that this bound is sharp.
\bigskip

\noindent {\bf Mathematics Subject Classification 2020:} 35J61, 35B44, 35B40, 35J75 

\noindent {\bf Keywords:}  singular Liouville equations,  simple and non-simple blow-up, higher-order conditions
\end{abstract}

\maketitle

\section{Introduction}

Singular Liouville-type equations play an important role in several areas of nonlinear analysis and appear naturally in a variety of problems arising in conformal geometry and mathematical physics. Typical examples include the prescribed Gauss curvature equation, mean field equations, self-dual Chern–Simons models, as well as equations connected with two-dimensional turbulence and Toda systems; see, for example, \cite{calipuma, chaki, joliwa, ta1, ta2, tro, ya}.

Let $\Omega\subset\R^2$ be a smooth bounded domain containing the origin. In this paper we study the singular Liouville problem
\begin{equation}\label{eq00}
\left\{
\begin{aligned}
-\Delta v &= \la |x|^{2\gamma}V(x)e^v
&& \text{in } \Omega,\\
v&=0
&& \text{on } \partial\Omega,
\end{aligned}
\right.
\end{equation}
where $\la>0$ is a small parameter, $V$ is a smooth positive potential, and $\gamma>0$.

A major difficulty in the analysis of \eqref{eq00} stems from the exponential nonlinearity, which causes a loss of compactness in the associated variational setting. As a consequence, sequences of solutions may develop blow-up phenomena, namely concentration of mass near isolated points where the solutions become unbounded. Understanding the asymptotic structure of solutions close to these concentration points is a fundamental issue and has been extensively investigated in the literature. In particular, several refined techniques have been developed to describe the local bubbling profile near blow-up points; we refer to \cite{bachenlita, bata, breme, chenlin00, chenlin0, ChenLin, delkomu, espogropi, li, lisha, mawe, mal, nasu, su, tar, we, zha1} and the references therein.

When the blow-up point does not coincide with the singular source, the asymptotic behavior is by now well understood; see for instance \cite{breme, chenlin0, li, lisha}. A fundamental contribution in this direction is due to Li \cite{li}, who, by means of the moving-plane method, established uniform asymptotic estimates showing that solutions with a single blow-up point are well approximated by suitably rescaled standard bubbles, up to uniformly bounded errors. Later, sharper expansions were obtained by Chen--Lin \cite{ChenLin} and Zhang \cite{zhang2006, zha1}. These estimates turned out to be crucial in a number of applications, including topological degree computations.

The situation is considerably subtler when blow-up occurs at the singular point $x=0$. In this case, the singular weight strongly influences the local behavior of solutions. Results of Kuo--Lin \cite{KuoLin} and Bartolucci--Tarantello \cite{bata}, subsequently refined in \cite{bachenlita}, show that if $\gamma\notin\N$, blow-up solutions satisfy a spherical Harnack inequality near the origin. Combined with suitable Pohozaev identities, this leads to quantization properties analogous to those obtained by Li in the regular case $\gamma=0$: after rescaling, the local profile converges to an entire solution of a Liouville-type equation. More precise asymptotic estimates were later derived in \cite{zha1}.

A genuinely different phenomenon appears when the singularity strength is an integer, namely when $\gamma=N\in\N$. In this regime the origin is called a quantized singular source, and the spherical Harnack inequality may fail. As a consequence, one may observe non-simple blow-up, characterized by the formation of several local concentration points near the origin.

To describe this phenomenon more precisely, consider a sequence of solutions $v_k$ of
\begin{equation}\label{eq000}
\left\{
\begin{aligned}&
-\Delta v_k = \la_k |x|^{2N}V_k(x)e^{v_k}
&& \text{in } \Omega,\\ &
\;\;\;\;\quad v_k=0
&& \text{on } \partial\Omega,
\end{aligned}
\right.
\end{equation}
where $N\in\N$ and the functions $V_k$ are uniformly positive and bounded. Throughout the blow-up analysis one usually assumes a uniform mass bound (see \cite{batta})
\beq\label{unifbound}
\la_k\int_\Omega |x|^{2N}V_k(x)e^{v_k},dx\leq C,
\eeq
for some constant independent of $k$. We also suppose that the origin is the unique blow-up point, namely
\beq\label{eqq000}
\max_\Omega v_k\to+\infty,
\eeq
while
\beq\label{eqq0000}
v_k \text{ remains uniformly bounded on compact subsets of }
\overline\Omega\setminus{0}.
\eeq

Under these assumptions two different blow-up patterns may occur. We say that the sequence exhibits {\bf simple blow-up} at the origin if
\beq\label{simple}
\sup_{\overline\Omega}
\Big(v_k(x)+2(N+1)\log|x|+\log\la_k\Big)\leq C.
\eeq
This condition means that, after an appropriate rescaling, the solution resembles a single bubble centered at $0$
; see \cite{selfdual}.

On the contrary, we say that $v_k$ exhibits {\bf non-simple blow-up} if
\beq\label{nons}
\max_{\overline\Omega}
\Big(v_k(x)+2(N+1)\log|x|+\log\la_k\Big)\to+\infty.
\eeq
This corresponds to the breakdown of the spherical Harnack inequality. In this case, as shown in \cite{bata, KuoLin}, the solution develops $N+1$ local maxima which, after suitable scaling, become asymptotically arranged on the unit circle with equal angular separation.

The simple blow-up regime at quantized singular sources has also received considerable attention. We refer in particular to \cite{linawu} for the construction of simple blow-up solutions with non-vanishing coefficient functions, and to \cite{tuoxin} for sharp estimates for simple bubbling solutions.
However, when $N$ is an integer, non-simple blow-up may also arise. The richer blow-up behavior arising for $N\in\N$  is closely related to the structure of the limiting equation 
\beq\label{limit}
-\Delta U=|x|^{2N}e^U
\quad \text{in }\R^2,
\qquad
\int_{\R^2}|x|^{2N}e^{U(x)},dx<+\infty,
\eeq
whose solutions describe the local blow-up profile. By the classification theorem in \cite{Pratar}, all solutions are explicitly given by the three-parameter family
\beq\label{bubble}
U_{\tau,\xi}=
\log
\frac{8(N+1)^2\tau}
{\big(\tau+|x^{N+1}-\xi|^2\big)^2},
\qquad
\tau>0,\ \xi\in\C.
\eeq
In general these solutions are not radially symmetric and may possess several maxima, a feature that is responsible for the occurrence of non-simple bubbling.

Despite the extensive literature on blow-up analysis, the existence theory for non-simple blow-up solutions is still far from complete. Various constructions have been obtained under additional assumptions, for example in the presence of perturbations of the singularity, symmetry assumptions on the domain, or parameter choices depending on $\la$; see \cite{dawe, dawezhasym, delespomu}. On the other hand, the existence of non-simple blow-up solutions for fixed potentials $V$ and fixed integer singularities remains largely open, even in the unit ball, apart from a few partial results for special classes of coefficients; see \cite{dawezhaasy}.

The aim of the present paper is to contribute to this question by identifying higher-order conditions on the coefficient function that lead to simple or non-simple blow-up at the singular point.
We first focus on the case of a fixed potential $V$ in the unit ball $$B_
1
:=\big\{x\in \R^2 
: |x|<1\big\},$$
so that problem \eqref{eq000}--\eqref{eqq0000} becomes
\begin{equation}\label{eq0}
\left\{
\begin{aligned}&
-\Delta v_k
=
\la_kV(x)|x|^{2N}e^{v_k}
& \text{in }& B_1,\\&
\;\;\;\;\quad v_k=0
&\!\!\!\! \text{on } &\partial B_1,\\ &
\lambda_k\int_{B_1}V(x)|x|^{2N}e^{v_k}\,dx
\leq C.
\end{aligned}
\right.
\end{equation}
We assume that the origin is the only blow-up point, namely
\beq\label{eqq0}
\max_{B_1}v_k\to+\infty,
\qquad
\sup_{\e\leq |x|\leq1}v_k<C(\e)
\quad \forall \e>0.
\eeq
We further impose the standard assumptions
\beq\label{eqq1}
V\in C^3(\overline B_1),
\qquad
\min_{\overline B_1}V>0,
\qquad
V(0)=1,
\qquad
\nabla V(0)=0.
\eeq

We stress that the condition $\nabla V(0)=0$ is not only technical but actually necessary for blow-up at the origin. Indeed, Wei and Zhang \cite{WeiZhang1} proved that even in the absence of the spherical Harnack inequality, the gradient of the coefficient function must vanish asymptotically at the blow-up point.

Our main objective is therefore to identify additional conditions on the potential $V$, beyond \eqref{eqq1}, ensuring the occurrence of simple or non-simple bubbling at the origin for problem \eqref{eq0}--\eqref{eqq0}.

The case $N=1$ was completely analyzed in \cite{secondorder}, where a full second-order characterization of the admissible potentials was obtained in the case of nondegeneracy.

\begin{thm}[\cite{secondorder}] \label{th5bis}
Let $N=1$ and assume that $V$ satisfies \eqref{eqq1} and, in addition, that  $0$ is a nondegenerate critical point of $V$. Then a necessary and sufficient condition for the existence of a blow-up sequence $v_k$ satisfying \eqref{eq0}--\eqref{eqq0} is
$${\rm det} \,D^
2
V(0)>0.$$
Moreover, the sequence exhibits simple blow-up in the sense of \eqref{simple}, and, after suitable rescaling, its limiting profile is given by a bubble of the form \eqref{bubble}.
\end{thm}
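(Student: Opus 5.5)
Theorem \ref{th5bis} has a necessity part and a sufficiency part, and I would treat them separately.

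\emph{Necessity.} Let $v_k$ be a blow-up sequence satisfying \eqref{eq0}--\eqref{eqq0} with $N=1$. I would first rescale around the origin. Set $\varepsilon_k^{-2(N+1)}:=\la_k e^{\max v_k}$ (up to normalization) and
$$u_k(y)=v_k(\varepsilon_k y)+2(N+1)\log\varepsilon_k+\log\la_k .$$
The rescaled functions converge to a solution of \eqref{limit}, hence to some $U_{\tau,\xi}$ as in \eqref{bubble}. Next I would use refined pointwise estimates of the type in \cite{zha1,WeiZhang1}, $|u_k-U_{\tau_k,\xi_k}|=O(\varepsilon_k^{\,2})$ on expanding balls. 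These are the place where $\nabla V(0)=0$ enters: it kills the first-order correction.

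The core of the argument is a localized Pohozaev identity on $B_r$, tested against $x\cdot\nabla v_k$ and $\partial_{x_i}v_k$. With $\nabla V(0)=0$ the first-order terms cancel, and the leading obstruction becomes
$$\int_{\R^2}\langle D^2V(0)y,y\rangle\,|y|^{2}e^{U_{\tau,\xi}}\,dy\quad\text{and}\quad \int_{\R^2} D^2V(0)y\,|y|^{2}e^{U}\,dy .$$
The boundary terms are controlled by the Green's representation $v_k\approx 8\pi(N+1)G(x,0)$ away from $0$. For $N=1$, writing $z=y^2$, the bubble has two symmetric peaks at $\pm\sqrt{\xi}$. The quadratic form then contributes in a way that depends both on $\Delta V(0)$ and on the trace-free part of $D^2V(0)$ paired with $\xi$.

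Balancing the resulting expansion against the boundary contribution of order $\varepsilon_k^{2}\log$, or against $\la_k$, should force sign conditions. Vanishing of the $\xi$-dependent equation forces $\xi_k\to0$ unless a degenerate relation holds; this is what yields simple blow-up, with nondegeneracy excluding the degenerate alternative. The radial balance then forces $\Delta V(0)$ and the eigenvalue structure to make $\det D^2V(0)>0$. Concretely, if the eigenvalues had opposite signs, one could choose the orientation of $\xi$ to produce a nonzero leading term and obtain a contradiction.

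\emph{Sufficiency.} If $\det D^2V(0)>0$, I would construct solutions by Lyapunov--Schmidt reduction. The ansatz is the projected bubble
$$PU_{\tau,0}\big(x/\varepsilon\big)+\text{corrections},$$
with parameters $(\varepsilon,\xi)$ tied to $\la$. The reduced energy has leading part $c\,\Delta V(0)\varepsilon^2\log$ plus a quadratic form in $\xi$ governed by $D^2V(0)$. Nondegeneracy of this form gives a stable critical point near $\xi=0$ via degree, and the solution is simple.

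\emph{Main obstacle.} I expect the main difficulty in the necessity direction: obtaining error estimates sharp enough that the second-order Pohozaev terms dominate, and ruling out non-simple configurations with $|\xi_k|\not\to0$. I would first try to derive a vector equation in $\xi$, from the translation-type Pohozaev identities, whose only nondegenerate solution is $\xi=0$.
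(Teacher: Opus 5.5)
The paper does not prove Theorem \ref{th5bis}; it is quoted from \cite{secondorder}. Its sufficiency half, however, is exactly Theorem \ref{th5} with $N=1$, and that reduction shows where your proposal breaks.

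Write $V(x)=1+\alpha_2|x|^2+\Re(\overline\beta_2x^2)+O(|x|^3)$, so that $\Delta V(0)=4\alpha_2$ and $\det D^2V(0)=4(\alpha_2^2-|\beta_2|^2)$. The trace-free (harmonic) part $\Re(\overline\beta_2x^2)$ contributes a $\xi$-independent term $4\pi\hat\la^{1/2}\beta_2$ to the reduced map. This is of the same order as the $\xi$-dependent contribution of $\alpha_2|x|^2$ at the natural scale $\xi=\hat\la^{1/2}\tilde\xi$. By \eqref{dublend3}--\eqref{ttcc1} the reduced equation is
\[
4\alpha_2\,\mathcal{F}(\tilde\xi)+\pi\beta_2=o(1),\qquad \mathcal{F}(\tilde\xi)=\intr|z+\tilde\xi|\,\frac{z}{(1+|z|^2)^3}\,dz .
\]
This is not a quadratic problem near $\xi=0$. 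The map $\mathcal{F}$ is the gradient of $\frac14\intr|z+\tilde\xi|(1+|z|^2)^{-2}dz$, which grows only linearly, and by Lemma \ref{fibrafibrafibra} it is a diffeomorphism onto the open disk of radius $\pi/4$.

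Hence a zero exists iff $|\beta_2|<|\alpha_2|$, i.e. iff $\det D^2V(0)>0$. It sits at $\xi^*=\mathcal{F}^{-1}(-\frac\pi4\beta_2/\alpha_2)$, which is nonzero whenever $D^2V(0)$ is not a multiple of the identity.

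Your sketch uses a quadratic form governed by $D^2V(0)$ whose nondegeneracy gives a stable critical point near $\xi=0$. That only uses $\det D^2V(0)\neq0$, so it would equally ``produce'' solutions when $\det D^2V(0)<0$, contradicting the necessity half. Linearizing $\mathcal{F}$ at $0$ is the illegitimate step. It yields a zero for every $\alpha_2\neq0$, whereas the true zero is at $|\tilde\xi|$ of order one and escapes to infinity as $|\beta_2|/|\alpha_2|\to1^-$. This is how Theorem \ref{mest} produces non-simple blow-up for $N=1$. Also, for $N=1$ there is no logarithm: the $V$-terms are of order $\hat\la^{1/2}$, while the boundary/Green's function terms are $O(\la)$ and do not enter the leading balance.

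The same misconception undermines your necessity argument. The claim that the $\xi$-equation forces $\xi_k\to0$ is false. The solutions of Theorem \ref{th5} with $N=1$ and $0<|\beta_2|<|\alpha_2|$ satisfy all hypotheses of Theorem \ref{th5bis}, and after rescaling they converge to $U_{\tau,\xi}$ with $\xi=\xi^*\neq0$. Simple blow-up \eqref{simple} corresponds to $\tilde\xi_k=\hat\la_k^{-1/2}\xi_k$ staying bounded, not tending to $0$. Nor can you ``choose the orientation of $\xi$'' in a necessity proof, since $\xi_k$ is dictated by the solution.

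What your translation-type Pohozaev identities must deliver is the balance $4\alpha_2\mathcal{F}(\tilde\xi_k)+\pi\beta_2\to0$. The conclusion then comes from the range of $\mathcal{F}$:
\begin{itemize}
\item If $\tilde\xi_k$ stays bounded, then $|\beta_2|<|\alpha_2|$, i.e. $\det D^2V(0)>0$. The case $\alpha_2=0$ would force $\beta_2=0$ and $D^2V(0)=0$, which is impossible.
\item If $|\tilde\xi_k|\to\infty$ (non-simple blow-up, for which the refined estimates must be redone), then $|\mathcal{F}(\tilde\xi_k)|\to\pi/4$ forces $|\beta_2|=|\alpha_2|$, i.e. $\det D^2V(0)=0$. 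Nondegeneracy rules this out.
\end{itemize}
The missing ideas are this interplay between $\Delta V(0)$ and the harmonic part of $D^2V(0)$, and the saturation $|\mathcal{F}|<\pi/4$. Without them, neither direction of your argument reaches the sign condition $\det D^2V(0)>0$, or explains why simple blow-up comes with it.
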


In this context, our first result identifies higher-order  conditions on the potential $V$ ensuring blow-up at $0$ for every  $N\geq1. $
More precisely, the second-order result extends to higher orders, provided that  $i_0$ is the first index for which  $\Delta^{i_0}V(0)\neq 0$  and that all derivatives of the coefficient function vanish up to order $i_0$. 

\begin{thm}\label{th4} Let $N\geq 1$. Assume that, along with \eqref{eqq1},  
 there exists $i_0\geq 1$ such that $V$ belongs to  $C^{2i_0+1}(U)$ for a neighborhood $U$ of $0$, and 
\beq\label{mangia}\begin{aligned}&\Delta V(0)=\Delta^2 V(0)=\ldots=\Delta^{i_0-1}V(0)=0\;\;\hbox{ if }i_0\geq 2,\\ &\Delta^{i_0}V(0)\neq 0\\ &
D^\mu V(0)=0\quad \forall 1\leq |\mu|\leq i_0.\end{aligned}\eeq 
Assume moreover that
$$
2i_0\neq N+1.
$$
If $N+1<2i_0<4N+4$, assume in addition that the Taylor expansion of
$V$ at $0$, up to degree $2i_0$, has no angular Fourier components\footnote{The angular Fourier component of order $n$ of a homogeneous polynomial $P$ of degree $m$ is the component of
$
P(r,\theta)=r^m\phi(\theta)
$
associated with the Fourier frequency $n$ in the angular variable.} of
orders
$
N+1,N+2,\ldots,2N+2.
$
If $i_0=N+1$, assume further that
$
\Delta^{i_0}V(0)\neq 2\cdot 4^{i_0}(i_0!)^2.
$

Then, 
there exists a   blow-up sequence  $v_k$   satisfying \eqref{eq0}--\eqref{eqq0}.
Moreover, $v_k$ satisfies the simple blow-up property \eqref{simple} and its limiting profile, after a suitable scaling, is a radially symmetric bubble of the form \eqref{bubble} with $\xi=0$. 

\end{thm}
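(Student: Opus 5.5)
We construct the solutions by a Lyapunov--Schmidt reduction, with ansatz a radially symmetric bubble centered at the origin. Write $\delta=\delta_k>0$ for the concentration parameter (so that $\tau=\delta^{2N+2}$). The approximate solution is the projection onto $H^1_0(B_1)$ of
\[
U_\delta(x)=\log\frac{8(N+1)^2\delta^{2N+2}}{(\delta^{2N+2}+|x|^{2N+2})^2}-\log\la,
\]
that is, $PU_\delta=U_\delta-\log\frac{8(N+1)^2}{\delta^{2N+2}\la}+O(\delta^{2N+2})$. Here $\la$ and $\delta$ are tied by $\la\sim \delta^{2N+2}$ up to a factor that will be fixed later. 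We look for $v=PU_\delta+\phi$.

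\textbf{Linear theory.} The linearized operator $-\Delta-|x|^{2N}e^{U}$ of the limit problem \eqref{limit} around the radial bubble has kernel
\[
\Big\{Z_0=\partial_\tau U_{\tau,0},\quad Z_1=\Re\frac{x^{N+1}}{\delta^{2N+2}+|x|^{2N+2}},\quad Z_2=\Im\frac{x^{N+1}}{\delta^{2N+2}+|x|^{2N+2}}\Big\},
\]
which is the derivative in $(\tau,\xi)$ of the family \eqref{bubble} at $\xi=0$. Note that $Z_1,Z_2$ carry the angular Fourier mode $N+1$.

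We work in weighted $L^\infty$ norms and solve the problem for $\phi$ orthogonal to $Z_0,Z_1,Z_2$, with Lagrange multipliers $c_0,c_1,c_2$, using a standard contraction argument. The error $R=\Delta PU_\delta+\la V|x|^{2N}e^{PU_\delta}$ is governed by $V(x)-1$. By \eqref{mangia}, $V-1=P_{2i_0}(x)+O(|x|^{2i_0+1})$ with $P_{2i_0}$ homogeneous of degree $2i_0$ (the lower-degree terms vanish because $D^\mu V(0)=0$ for $|\mu|\le i_0$, together with the vanishing of the lower Laplacians; at worst the lower non-harmonic pieces have to be tracked as well). Consequently $\|R\|\lesssim \delta^{2i_0}|\log\delta|$ in the weighted norm, and we obtain $\|\phi\|\lesssim \delta^{2i_0}|\log\delta|$.

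\textbf{Cancelling the multipliers.} This is the main obstacle. Both $c_1$ and $c_2$ are proportional to
\[
\int_{B_1}R\,Z_j .
\]
After scaling $x=\delta y$, the only components of $P_{2i_0}$ that contribute are its angular Fourier components of order $N+1$ (modes $N+1,\dots,2N+2$ also enter at second order through $\phi$ and the quadratic terms). The hypothesis that these components are absent, needed only when $N+1<2i_0<4N+4$, together with the $O(|x|^{2i_0+1})$ tail, makes $c_1,c_2$ of higher order. For $2i_0\ge 4N+4$ the pairing converges to zero by degree counting. We do not rely on symmetry; instead we compute directly that the projection is $o$ of the remaining main term, so that $c_1=c_2=0$ can be achieved by a small shift $\xi$ of the bubble center, via a degree argument on the $2\times2$ system. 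The nondegeneracy of that system is exactly the condition $2i_0\neq N+1$: if $2i_0=N+1$, the mode-$(N+1)$ component of $P_{2i_0}$ would be harmonic, and the resonance would prevent solvability.

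\textbf{Choosing $\la$.} The radial multiplier $c_0$ is fixed by $\la$. Computing $\int_{B_1} R\,Z_0$ through a Pohozaev-type expansion gives a main term
\[
\delta^{2i_0}\,\Delta^{i_0}V(0)\,\kappa_{N,i_0}+\text{(boundary term } \sim\delta^{2N+2}\text{ from } H(0,0)).
\]
The integral $\int_{\R^2}|y|^{2N+2i_0}\,\partial_\tau e^U$ converges only for $2i_0<2N+2$. Otherwise we truncate, which produces logarithmic corrections or a dominant boundary contribution. When $2i_0\neq 2N+2$, a single term dominates, and we choose $\la=\la(\delta)$ (equivalently, $\delta$ as a function of $\la$) so that $c_0=0$; the relation is explicit up to $1+o(1)$, and the intermediate value theorem gives the zero. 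When $i_0=N+1$ the two contributions are of the same order and must not cancel; this is the condition $\Delta^{i_0}V(0)\neq 2\cdot4^{i_0}(i_0!)^2$, after the constant $\kappa$ is computed via $\Delta^{i_0}|x|^{2i_0}=4^{i_0}(i_0!)^2$.

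\textbf{Conclusion.} By construction $v_k=PU_{\delta_k}+\phi_k$ satisfies \eqref{eq0}--\eqref{eqq0}, since the mass is bounded by $8\pi(N+1)+o(1)$. It is simple in the sense of \eqref{simple}, because the bubble is centered at $\xi=o(\delta^{N+1})$ and $\phi=o(1)$. Its rescaled limit is $U_{1,0}$.
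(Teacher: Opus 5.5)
There is a genuine gap at the decisive step, the cancellation of $c_1,c_2$, and the hypotheses on $V$ are attached to the wrong equation. The Fourier-mode assumption only makes $\int_{B_1}R\,PZ_j$ small at $\xi=0$. To get a zero of the reduced map by a degree or stability argument, you must extract the leading $\xi$-dependent part of $\xi\mapsto\int_{B_1}R_\lambda PZ_\lambda\,dx$, show it is a nondegenerate linear map, and show it dominates every remainder, including the $\phi$-terms. You never compute this term, and its nondegeneracy is not ``exactly $2i_0\neq N+1$''. In the paper (see \eqref{dublend1}, \eqref{dublend4}, \eqref{dublend60}, \eqref{dublend6}) the leading term is:
$-\alpha_{2i_0}\Gamma_{2i_0}\hat\lambda^{\frac{i_0}{N+1}-\frac12}\xi$ for $i_0\le N$. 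It comes from the radial part $\alpha_{2i_0}|x|^{2i_0}$ of ${\rm q}_{2i_0}$ paired with the shifted bubble, and it is nondegenerate precisely because $\Delta^{i_0}V(0)=4^{i_0}(i_0!)^2\alpha_{2i_0}\neq0$.
$-4\pi(N+1)\hat\lambda^{\frac12}(\alpha_{2N+2}-2)\xi$ for $i_0=N+1$. Here the $-2\xi$ comes from the regular part of the Green function, i.e.\ the term $4\Re(\bar\xi x^{N+1})|x|^{2N}e^{W_\lambda}$ in $R_\lambda$ produced by $8\pi\mathcal{H}(x^{N+1},\xi)$ in \eqref{pro-exp1}. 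This competition is exactly where $\Delta^{N+1}V(0)\neq2\cdot4^{N+1}((N+1)!)^2$ is used.
$8\pi(N+1)\hat\lambda^{\frac12}\xi$, from the Green function alone, for $i_0>N+1$.
Only with this in hand do the rescaling $\xi=\hat\lambda^{\frac12+\frac{1}{3(N+1)}}\tilde\xi$ and the stability of the nondegenerate zero $\tilde\xi=0$ give $\xi_\lambda=o(\lambda^{1/2})$. Also, $2i_0=N+1$ is not a resonance that blocks solvability. Theorem \ref{th5} solves that case when $|\beta_{N+1}|<|\alpha_{N+1}|$, with the reduced map becoming the nonlinear field $\mathcal{F}$ plus a constant from $\beta_{N+1}$. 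It is excluded here only because the profile need not be radial.

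Conversely, the dilation equation cannot carry the conditions you place on it. For the Dirichlet problem, $PZ^0_\lambda=Z^0_\lambda+1+O(\lambda)$ is not an approximate kernel element. So the paper imposes orthogonality only to $PZ^1_\lambda,PZ^2_\lambda$ (Proposition \ref{inv}), and the scale is tied to $\lambda$ by boundary matching, $\tau=\hat\lambda$, since $\mathcal{H}(x^{N+1},0)\equiv0$. If you keep a free $\delta$ with $\lambda=\mu\,8(N+1)^2\delta^{2N+2}$, then $R$ contains $(1-\mu)\lambda|x|^{2N}e^{U_\delta}$ at order one, whose pairing with $PZ_0$ is about $8\pi(N+1)(1-\mu)$. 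That equation just fixes $\mu=1+o(1)$ and uses nothing about $V$. Your description is inconsistent in any case: if a $\lambda$-independent term $\delta^{2i_0}\Delta^{i_0}V(0)\kappa_{N,i_0}$ dominated, no choice of $\lambda$ could make $c_0$ vanish.

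Finally, \eqref{mangia} does not remove ${\rm q}_{i_0+1},\dots,{\rm q}_{2i_0-1}$; only their radial parts vanish. So $R_\lambda=(\lambda+|\xi||x|^{N+1}+|x|^{i_0+1})O(|x|^{2N}e^{W_\lambda})$ as in \eqref{imbe}, and (for $i_0\le N+1$) $\|\phi_\lambda\|\lesssim\lambda^{\frac{i_0+1}{2(N+1)}-\varepsilon}$, not $\delta^{2i_0}$. These terms drop out of the projection only through the $\frac{2\pi}{N+1}$-symmetry of $|x|^{2N}e^{W_\lambda}Z_\lambda$ (Lemma \ref{copy0}) together with $\alpha_i=0$ for $i<2i_0$. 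You must also check that the resulting $\phi$-contributions, of order $\lambda^{\frac{i_0+1}{N+1}-\varepsilon'}$, stay below the rescaled main term.
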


A more general version, with weaker restrictions on the angular Fourier components of $V$, is proved in Section 6.

In all situations described above  the solutions $v_k$ exhibit a radially symmetric limiting profile around the maximum point $0$. 
In contrast, when $2 i_0=N+1$ with $N$ odd, we are able to find  a sequence of blow-up solutions whose leading-order profile may be non-symmetric, provided that the coefficients of order $N+1$  in the expansion of $V$ at $0$ satisfy a suitable condition relating the harmonic part and the radial part, as stated in the following theorem. 

\begin{thm}\label{th5} Let $N\geq 1$ be odd. Assume that, along with \eqref{eqq1}, 
$V\in C^{N+2}(U)$ in a neighborhood $U$ of $0$ and  
\begin{itemize}
\item $\Delta V(0)=\Delta^2 V(0)=\ldots=\Delta^{\frac{N-1}{2}}V(0)=0$ if $N\geq 3$;

\vspace{2mm}

\item $\Delta^{\frac{N+1}{2}}V(0)\neq 0$;

\vspace{2mm}
\item $D^\mu V(0)=0\quad \forall 1\leq |\mu|\leq \frac{N+1}{2}$\vspace{2mm}

\item $\big|\Delta^{\frac{N+1}{2}}V(0)\big|>\kappa_N\big|{\beta}_{N+1}\big|$ with $\kappa_N= 2^{N+1}\big(\frac{N+1}{2}!\big)^2$,
\

\end{itemize}
\vspace{2mm}
where $\Re(\beta_{N+1} x^{N+1})$ denotes the harmonic component of order\footnote{\label{note}In $\R^2\simeq\C$, every homogeneous harmonic polynomial of degree $m$ is of the form
$
\Re(\overline{\beta} x^m),
\,\beta\in\C.
$
Accordingly, if $P$ is a homogeneous polynomial of degree $m$, we call
$
\Re(\overline{\beta} x^m)
$
the harmonic part of $P$ whenever
$
P(x)=\Re(\overline{\beta} x^m)+|x|^2Q(x)
$
 for some homogeneous polynomial $Q$ of degree $m-2$. Equivalently,  $\Re(\overline{\beta} x^m)$ is the projection of $P$ onto the space of harmonic homogeneous polynomials of degree $m$.} $N+1$ in the expansion of $V$ at $0$.

Then, there exists a sequence of  blow-up solutions  $v_k$   satisfying \eqref{eq0}--\eqref{eqq0}. Moreover, the limiting profile of $v_k$, after a suitable scaling, is a bubble \eqref{bubble} with $\xi$ given by  the unique solution of
\beq\label{nisc}\intr \frac{|z+\xi|}{(|1+|z|^2)^3}z \,dz +\frac\pi4 \kappa_N\frac{\beta_{N+1}}{\Delta^{\frac{N+1}{2}}V(0)}=0
.\eeq

\end{thm}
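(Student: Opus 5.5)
Theorem \ref{th5} is proved by a Lyapunov--Schmidt reduction around the three-parameter family \eqref{bubble}, exactly as in the proof of Theorem \ref{th4}. The difference is that the parameter $\xi$ is no longer pinned at $0$ by symmetry: it is selected by a finite-dimensional reduced equation. Set $\delta_k^{2(N+1)}\sim\la_k$ and $\tau=1$ after scaling. As ansatz we take the projection onto $H^1_0(B_1)$ of
$$
U_{\delta,\xi}(x)=\log\frac{8(N+1)^2\delta^{2(N+1)}}{\big(\delta^{2(N+1)}+|x^{N+1}-\delta^{N+1}\xi|^2\big)^2},
$$
corrected by the usual $H(x,0)$ term and a constant. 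We then look for $v=PU_{\delta,\xi}+\phi$.

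\textbf{Step 1: linear theory.} The linearized operator $-\Delta-|x|^{2N}e^{U_{1,\xi}}$ in $\R^2$ has a kernel spanned by $\partial_\tau U$, $\partial_{\xi_1}U$ and $\partial_{\xi_2}U$; this nondegeneracy is known from the classification \cite{Pratar}. The scaling direction $\partial_\tau$ is absorbed by the choice of $\delta$ as a function of $\la$, or left as a further parameter. Using the standard weighted $L^\infty$ estimates, we solve the projected problem for $\phi$ orthogonal to the kernel, uniformly for $\xi$ in compact sets. This gives $\|\phi\|\lesssim$ the size of the error.

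\textbf{Step 2: the error and the reduced map.} Expand $V(x)=1+P_{N+1}(x)+O(|x|^{N+2})$. The hypothesis $D^\mu V(0)=0$ for $|\mu|\le\frac{N+1}{2}$, combined with vanishing of the lower Laplacians, should make all homogeneous parts of order $\le N$ irrelevant at the leading order. I would try to verify that they contribute nothing at order $\delta^{N+1}$ by showing their projections onto $\partial_\xi U$ vanish by parity and Fourier orthogonality; this is the point I am least sure of. The decisive term is then
$$
P_{N+1}(x)=\Re(\overline{\beta}_{N+1}x^{N+1})+c\,|x|^{N+1},\qquad c\ \text{proportional to}\ \Delta^{\frac{N+1}{2}}V(0),
$$
where the radial part is an even power of $|x|$ with constant $c=\Delta^{\frac{N+1}{2}}V(0)/\big(2^{N+1}(\frac{N+1}{2}!)^2\big)$; the constant $\kappa_N$ comes from this normalization. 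In the variable $z=y^{N+1}$, the reduced equation is the projection onto $\partial_\xi U$ of $\delta^{N+1}P_{N+1}(y)|y|^{2N}e^{U}$. Using $|y|^{N+1}=|z|$ and $y^{N+1}=z$, and changing variables ($|y|^{2N}dy\mapsto dz/(N+1)$, then $z\mapsto z+\xi$), this yields, up to a nonzero factor times $\delta^{N+1}$, the map
$$
F(\xi)=c\intr\frac{|z+\xi|}{(1+|z|^2)^3}\,z\,dz+\frac{\pi}{4}\,\overline{\beta}\text{-term}+o(1).
$$
Computing the harmonic integral exactly should produce the constant $\pi/4$ and the form \eqref{nisc}.

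\textbf{Step 3: solving \eqref{nisc} — the main obstacle.} We must show that $G(\xi)=\intr\frac{|z+\xi|}{(1+|z|^2)^3}z\,dz$ is a diffeomorphism onto the disc $\{|w|<\frac{\pi}{4}\}$, with nonzero degree and a nondegenerate solution. By rotational covariance, $G(\xi)=g(|\xi|)\xi/|\xi|$. I would then show that $g$ is strictly increasing with $g(0)=0$ and $g(\infty)=\lim\intr\frac{z\cdot e}{(1+|z|^2)^3}z\,dz=\pi/4$; monotonicity should follow from convexity of $\xi\mapsto|z+\xi|$, since $G=\nabla_\xi\Phi$ with $\Phi(\xi)=\intr\frac{|z+\xi|-|z|}{(1+|z|^2)^3}\,dz$ strictly convex. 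Hence \eqref{nisc} is solvable, uniquely and nondegenerately, precisely when $\kappa_N|\beta_{N+1}|/|\Delta^{\frac{N+1}{2}}V(0)|<1$, which is the hypothesis of the theorem. Brouwer degree stability then gives a zero $\xi_k\to\xi$ of the full reduced map.

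\textbf{Step 4: conclusion.} The resulting $v_k$ satisfy \eqref{eq0}--\eqref{eqq0} with bounded mass $8\pi(N+1)+o(1)$. Their rescaling converges to $U_{1,\xi}$, which gives the stated limiting profile.
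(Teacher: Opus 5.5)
Your route is the paper's: reduction around $PW_\lambda$ with $\xi=\hat\lambda^{1/2}\tilde\xi$, and lower-order Taylor terms annihilated because $|x|^{2N}e^{W_\lambda}Z_\lambda$ is $\frac{2\pi}{N+1}$-invariant (Lemma~\ref{copy0}) and $\Delta^jV(0)=4^j(j!)^2\alpha_{2j}$ (Lemma~\ref{finallemma}). The reduced map is $4\alpha_{N+1}\mathcal{F}(\tilde\xi)+\pi\beta_{N+1}+o(1)$. So the step you were least sure of is right. Your expansion is not, however: $V=1+P_{N+1}+O(|x|^{N+2})$ with $P_{N+1}=\Re(\overline\beta_{N+1}x^{N+1})+c|x|^{N+1}$ is not what the hypotheses give. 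For $N\ge 3$ the terms of degrees $\frac{N+3}{2},\dots,N$ need not vanish; the $V_\delta$ in the proof of Theorem~\ref{mest} contains $\frac14\Re(x^{\frac{N+3}{2}})$. The degree-$(N+1)$ term can also carry Fourier modes $0<\ell<N+1$. The same symmetry argument kills all of these. Two points still need repair.

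First, the hypothesis $D^\mu V(0)=0$ for $|\mu|\le\frac{N+1}{2}$ plays no role in those cancellations; it is used in a step you skipped. The terms of degree $\ge\frac{N+3}{2}$ survive in $R_\lambda$, so you only get $\|\phi_\lambda\|\lesssim\lambda^{\frac{N+3}{4(N+1)}-\varepsilon}$. This is not $o(\lambda^{1/2})$, and for $N\ge3$ it is much larger than the leading term of the reduced map. Hence ``$\|\phi\|\lesssim$ size of the error'' is not enough. You must show that $\phi_\lambda$ enters the projected equation only through two kinds of terms, as in \eqref{torcer}--\eqref{ariooo}. The linear one is $\int|x|^{2N}e^{W_\lambda}\phi_\lambda(PZ_\lambda-Z_\lambda)\,dx=O(\lambda^{1-\varepsilon}\|\phi_\lambda\|)$; the $Z_\lambda$ part drops out by the orthogonality $\int\nabla\phi_\lambda\cdot\nabla PZ_\lambda=0$. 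The quadratic ones satisfy $O(\|\phi_\lambda\|^2+\|R_\lambda\|\,\|\phi_\lambda\|)=O(\lambda^{\frac12+\frac1{N+1}-2\varepsilon})$. This is $o(\lambda^{1/2})$ exactly because the lowest nonvanishing Taylor degree exceeds $\frac{N+1}{2}$. With a term of degree $\frac{N+1}{2}$, the same estimate would only give $O(\lambda^{\frac12-2\varepsilon})$.

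Second, the potential in Step 3 is wrong. We have $\nabla_\xi\int_{\mathbb{R}^2}(|z+\xi|-|z|)(1+|z|^2)^{-3}dz=\int_{\mathbb{R}^2}\frac{z+\xi}{|z+\xi|}(1+|z|^2)^{-3}dz\neq\mathcal{F}(\xi)$; its modulus tends to $\frac\pi2$, not $\frac\pi4$. Integrating by parts with $\frac{z}{(1+|z|^2)^3}=-\frac14\nabla_z(1+|z|^2)^{-2}$ gives the correct one: $\mathcal{F}=\nabla\Psi$ with $\Psi(\xi)=\frac14\int_{\mathbb{R}^2}(|z+\xi|-|z|)(1+|z|^2)^{-2}dz$. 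Its Hessian is $\frac14\int_{\mathbb{R}^2}|z+\xi|^{-1}(I-u\otimes u)(1+|z|^2)^{-2}dz$, with $u=\frac{z+\xi}{|z+\xi|}$, and this is positive definite. With this fix your convexity argument is a valid and somewhat slicker alternative to the paper's proof of Lemma~\ref{fibrafibrafibra}, which pairs $z$ with $-z$ to show $\phi(r)>0$ and $(r\phi(r))'>0$. Convexity gives injectivity, $\det D\mathcal{F}>0$ and strict monotonicity of $|\mathcal{F}|$ at once.
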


\begin{rmk}\label{rmkth5}The map $$\xi\longmapsto\intr \frac{|z+\xi|}{(|1+|z|^2)^3}z\, dz$$ is rotationally equivariant. As a consequence, its image is radial and is precisely  the open disk $|\xi|<\frac{\pi}{4}$ (see Lemma \ref{fibrafibrafibra}). It follows that equation \eqref{nisc} admits a unique solution under the stated assumptions on $\Delta^{\frac{N+1}{2}}V(0)$ and $\beta_{N+1}$. In particular, if the expansion of $V$ at $0$ has no harmonic component of order $N+1$, namely if $\beta_{N+1}=0$, then  $\xi=0$. Consequently, the corresponding sequence of blow-up solutions is radially symmetric at leading order and behaves like a single bubble centered at the origin.
\end{rmk}

In all the above results, only examples of simple blow-up are exhibited. 
We next exploit Theorem \ref{th5} to establish the existence of non-simple blow-up solutions
 for problem \eqref{eq0}–\eqref{eqq0}, by allowing the potential to depend on $k$, i.e.,  $V=V_k$.

We recall that 
in a recent  line of research developed in \cite{WeiZhang, WeiZhang1, WeiZhang2}, the authors investigate in a broad framework which vanishing properties the coefficient function $V$ must satisfy when the origin is a non-simple blow-up point. While \cite{WeiZhang1, WeiZhang2} show that in non-simple bubbling regimes both the first derivatives and the Laplacian of $V$ vanish at the quantized singular point, further results in \cite{WeiZhang4, WeiZhang3} extend these conclusions to higher-order derivatives. 

\begin{thm}[\bf{\cite{WeiZhang4}}]\label{thweizhang} Let $N\geq 2$. Assume that $v_k$ is a sequence of non-simple blow-up solutions around the origin satisfying \eqref{eq0}--\eqref{eqq0} with 
$V=V_k$ 
and
 \beq\label{assuV} \|V_k\|_{C^{N+2}(\overline B_1)}\leq C,\qquad  \frac1C\leq V_k(x)\leq C\quad \forall x\in \overline B_1\eeq for some constant $C>0$ independent of $k$.  Then,
$$D^\mu V_k(0)=o(1)\quad \forall 1\leq |\mu|\leq \max\big\{2^n+1\,\big|\, n\in\N\cup\{0\},\, 2^{n+1}\leq N\big\}.
$$
\end{thm}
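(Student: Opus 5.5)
This is Theorem \ref{thweizhang}, quoted from \cite{WeiZhang4}. The plan is an induction on the dyadic scale $n$, in the spirit of the Wei--Zhang bootstrap. We work in the non-simple regime \eqref{nons}: after rescaling by $\e_k=e^{-\frac{v_k(p_k)}{2(N+1)}}$, with $p_k$ a local maximum, the blow-up profile is a bubble $U_{\tau,\xi}$ of \eqref{bubble} with $\xi\neq0$. Its $N+1$ local maxima $q_k^{l}$ sit asymptotically on a circle of radius $\delta_k\to0$ at the vertices of a regular polygon.

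We write $w_k$ for the rescaled function. Around each $q_k^l$ we expand $w_k$ against the global bubble $U_k$, and set up the error $w_k-U_k-c_k$. We then apply the Pohozaev identity on small balls $B(q_k^l,r)$, with the translation vector field. After summing over the $N+1$ maxima, the boundary terms cancel to high order because of the $\mathbb Z_{N+1}$ equidistribution of the $q_k^l$. The interior term is $\int \partial_i(V_k(\delta_k\cdot))|y|^{2N}e^{w_k}$. Its leading contributions are the Taylor coefficients of $V_k$ at $0$, paired with moments of the bubble.

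First step ($|\mu|=1$). This is \cite{WeiZhang1}, which we take as given: $\nabla V_k(0)=o(1)$, and also $\Delta V_k(0)=o(1)$ by \cite{WeiZhang2}.

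Inductive step. Suppose $D^\mu V_k(0)=o(1)$ for $|\mu|\le 2^n+1$, with $2^{n+1}\le N$. The error $w_k-U_k$ then improves to size $O(\delta_k^{2^n+2})$ plus a projection onto the kernel of the linearized operator. We carry this out by building correction terms that solve the linearized equation with right-hand side $\delta_k^{m}P_m(y)|y|^{2N}e^{U_k}$, where $P_m$ are the homogeneous Taylor pieces. These solutions exist as long as the Fourier mode of $P_m$ avoids the resonant modes $0$ and $N+1$.

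We then insert the improved expansion into the Pohozaev identities, using the vector fields $\partial_i$ and $x\cdot\nabla$ together with rotations. Matching powers of $\delta_k$ for degrees $m\in(2^n+1,2^{n+1}+1]$ should show the following. The angular mode-$m$ components of the degree-$m$ Taylor polynomial of $V_k$ pair nondegenerately with the polygonal configuration whenever $m\le N$, because the discrete Fourier modes of the $N+1$ points are then nonresonant. Hence these components are $o(1)$. The radial and lower-mode components are then controlled by the same identities, applied at the next order, together with the equation satisfied by $\Delta^{j}V_k(0)$.

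Why the dyadic restriction appears: the nonlinear interaction of two corrections of order $m$ produces terms of order $2m$. So each step can only double the range of vanishing orders, and the procedure works while $2^{n+1}\le N$ keeps everything below the resonant frequency $N+1$.

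The main obstacle is the error estimate in the inductive step, namely obtaining $|w_k-U_k-\text{corrections}|\le C\delta_k^{2^{n+1}+\sigma}$ uniformly, including near the maxima. This needs a nondegeneracy and uniqueness argument for the linearized operator around the non-radial bubble $U_{\tau,\xi}$, whose kernel is three-dimensional. We would try a Green-representation and contradiction argument: normalize the error, pass to a limit that lies in the kernel, and rule it out using the orthogonality conditions imposed at the maxima $q_k^l$.
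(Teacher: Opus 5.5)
The paper does not prove this statement. It is quoted from \cite{WeiZhang4} and used only as the benchmark that Theorem \ref{mest} shows to be sharp. Your sketch therefore has to stand on its own, and its decisive step is not merely incomplete but wrong as stated.

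You assert that, for $m\le N$, the mode-$m$ part of the degree-$m$ Taylor polynomial ``pairs nondegenerately with the polygonal configuration'' because the modes are nonresonant. It is the other way round. Nonresonance, $(N+1)\nmid m$, means $\delta_k^{-m}\sum_l (q_k^l)^m\to 0$, so after summing over the $N+1$ maxima such a term contributes nothing at first order. The continuous version of this is Lemma \ref{copy0}; it is exactly why the term $\frac14\Re(x^{\frac{N+3}{2}})$ in the potentials of Theorem \ref{mest} is invisible in the reduced equation. In the individual, unsummed identities, its force can be compensated by a small displacement of the maxima from the regular polygon.

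The paper in fact contains counterexamples to your inductive step. Take $N=3$, $n=0$, and the potentials $V_k=1+\frac14\Re(x^3)+\frac14|x|^4+\frac{1-\de_k}{4}\Re(x^4)$ of Theorem \ref{mest}. They satisfy $D^\mu V_k(0)=0$ for $|\mu|\le 2$, so your hypothesis holds, and they carry non-simple blow-up. Yet $\partial_{x_1}^3V_k(0)=\frac32$, contradicting the conclusion of your step. The same happens for every $N=2^{n+1}+1$: the surviving degree $\frac{N+3}{2}=2^n+2$ lies in your range $(2^n+1,2^{n+1}+1]$. So in the range $2^{n+1}\le N<2^{n+2}$ your step overshoots the bound of Theorem \ref{thweizhang} by a full dyadic level, and there it is false in general.

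What is missing is the actual mechanism. Since nonresonant components are invisible at first order, any constraint on them must come from higher-order effects. These could be quadratic in the Taylor data, or arise from coupling to the finer $\e_k$-scale structure of the bubbles. In either case the correction generated by such a term has to feed back into a resonant (radial or mode-$(N+1)$) contribution of the summed Pohozaev identities. You would have to compute that contribution and prove it is nondegenerate. You would also have to show it cannot be compensated by the resonant balance at degree $N+1$ that Theorem \ref{mest} exploits, namely $\kappa_N|\beta_{N+1}|/|\Delta^{\frac{N+1}{2}}V_k(0)|\to1^-$.

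Making ``matching powers of $\delta_k$'' meaningful also requires rates. If the lower-order coefficients are only known to be $o(1)$, then terms $o(1)\delta_k^{j}$ with $j\le 2^n+1$, and their products, can dominate the $\delta_k^m$ terms you want to isolate. The induction hypothesis therefore has to be a quantitative bound in terms of $\delta_k$ and $\e_k$. Your sketch contains neither the higher-order computation nor the rates, and it explicitly leaves the central error estimate open.

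Two smaller points.
\begin{enumerate}
\item A limit profile $U_{\tau,\xi}$ with fixed $\xi\neq0$ at the height scale $\e_k$ is \emph{simple} blow-up, as the end of the proof of Theorem \ref{th5} shows. Non-simple blow-up corresponds to $|\xi|\to\infty$ at that scale, as with $|\xi_k^*|\to+\infty$ in the proof of Theorem \ref{mest}.
\item The results of \cite{WeiZhang1,WeiZhang2} give only $\nabla V_k(0)$ and $\Delta V_k(0)$, while starting your induction at $n=0$ needs the full Hessian. The missing trace-free part is a mode-$2$ term, which is nonresonant for $N\ge2$ and hence precisely of the kind your first-order argument cannot detect.
\end{enumerate}
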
 

\vspace{2mm}
We now use Theorem \ref{th5} to construct non-simple blow-up sequences showing that the upper bound on the order of vanishing derivatives in Theorem \ref{thweizhang} is sharp for a distinguished class of odd values of $N$.

\begin{thm}\label{mest}  Let $N\geq1$ be odd. Then, there exists a sequence of potentials $V_k$  satisfying \eqref{assuV}  and a sequence of non-simple blow-up solutions $v_k$ for problem \eqref{eq0}--\eqref{eqq0} with $V=V_k$ such that

$$D^{\mu}V_k(0)=o(1) \hbox{ if } 1\leq |\mu|\leq \frac{N+1}{2},\qquad \max_{|\mu|=\frac{N+3}{2}}|D^{\mu} V_k(0)|\geq c>0 .$$
\end{thm}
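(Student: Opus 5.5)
Fix an odd $N$ and take $V_k$ as a single fixed potential $V$ plus a small perturbation. By Theorem \ref{th5}, the blow-up profile is a bubble \eqref{bubble} with $\xi\neq 0$ as soon as $\beta_{N+1}\neq 0$. We tune $\xi$ along $k$ so that the rescaled profile degenerates into non-simple blow-up.

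\textbf{Choice of potentials.} Take
\[
V_k(x)=1+a|x|^{N+1}+\Re\big(\overline{b_k}\,x^{N+1}\big)+W(x),
\]
where $W$ is a fixed smooth correction ensuring positivity and $C^{N+2}$ bounds away from $0$, vanishing to high order at $0$. The constant $a\neq0$ is real, so that $\Delta^{\frac{N+1}{2}}V_k(0)=c_Na\neq 0$, with all lower Laplacians and all derivatives of order $\le\frac{N+1}{2}$ vanishing at $0$. Choose $b_k$ with $|\beta_{N+1}(k)|$ increasing to the threshold $|\Delta^{\frac{N+1}2}V(0)|/\kappa_N$.

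For each fixed $k$, Theorem \ref{th5} gives blow-up families $v_{k,\la}$ as $\la\to0$, with limit profile $\xi_k$ solving \eqref{nisc}. By Remark \ref{rmkth5}, $|\xi_k|\to\infty$ as the ratio tends to the threshold. A diagonal choice $\la_k\to0$ fast enough then gives solutions $v_k:=v_{k,\la_k}$ which, after the scaling $\tau$, are close to $U_{1,\xi_k}$ with $|\xi_k|\to\infty$.

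\textbf{Non-simplicity.} The profile $U_{1,\xi}$ attains its maxima at the $N+1$ roots of $x^{N+1}=\xi$, at distance $|\xi|^{1/(N+1)}$ from the origin. There the quantity $v+2(N+1)\log|x|+\log\la$ in rescaled variables equals roughly $\log\big(8(N+1)^2\big)+2\log|\xi|$, which tends to $+\infty$. Thus \eqref{nons} holds, provided the diagonal error from Theorem \ref{th5} is $o(1)$ in sup norm on those regions. This requires the convergence in Theorem \ref{th5} to be quantitative in $C^0$ locally, which the construction (a Lyapunov–Schmidt ansatz) should supply.

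\textbf{Derivative conditions.} These are immediate from the choice of $V_k$. However, $|x|^{N+1}$ has degree $N+1$ and is not smooth when $\frac{N+1}2$ is not an integer… in fact $N$ is odd, so $N+1$ is even and $|x|^{N+1}$ is a polynomial. Its derivatives of order $\frac{N+3}{2}$ at $0$ vanish, though, so a nonzero derivative of order $\frac{N+3}{2}$ must come from elsewhere.

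This forces a modification: replace $\Re(\overline{b_k}x^{N+1})$ by a degree-$\frac{N+3}{2}$ term. The mechanism must be that the non-simple regime for $V_k$ requires the lower-order data to balance at a scale where a term of order $\frac{N+3}{2}$ is not negligible. I therefore expect the real construction to let $\la_k$-dependent scalings mix orders, with $V_k$ containing a fixed nonzero homogeneous term of degree $\frac{N+3}{2}$ whose effect on the reduced problem is $o(1)$ at the relevant scale, or whose harmonic part is compatible.

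\textbf{Main obstacle.} The hard step is extending Theorem \ref{th5} to allow an extra fixed term of order $\frac{N+3}{2}$ while $|\xi_k|\to\infty$ simultaneously, and keeping the reduced finite-dimensional problem solvable uniformly. I would first try to show that a degree-$\frac{N+3}{2}$ term contributes at order $\tau^{\frac{N+3}{2(N+1)}}$, smaller than the order-$(N+1)$ term $\tau^{1/2}$ only if it is suitably placed. Failing that, I would choose its angular Fourier modes, via the odd/even parity of $\frac{N+3}{2}$ against the $(N+1)$-fold symmetry, so that its projection onto the kernel vanishes.
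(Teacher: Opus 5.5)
Your mechanism for non-simplicity is the paper's. You apply Theorem \ref{th5} to potentials with $\kappa_N|\beta_{N+1}|/|\Delta^{\frac{N+1}{2}}V(0)|\to1^-$ and get $|\xi^*|\to\infty$ from Lemma \ref{fibrafibrafibra}. You then diagonalize using the expansion $v_\lambda=-2\log(\hat\lambda+|x^{N+1}-\hat\lambda^{1/2}\xi^*|^2)+o(1)$, and evaluate at an $(N+1)$-th root of $\hat\lambda_k^{1/2}\xi_k^*$, with $\lambda_k$ also small enough that $\hat\lambda_k^{1/2}|\xi_k^*|\to0$. The proof of Theorem \ref{th5} gives that expansion uniformly on $B_1$, so your $C^0$ concern is settled. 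For $N=1$ your potential is essentially the paper's.

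For $N\ge3$, however, there is a genuine gap: you never produce a $V_k$ with $\max_{|\mu|=\frac{N+3}{2}}|D^\mu V_k(0)|\ge c$ to which the construction applies. The routes you sketch do not work as stated:
\begin{itemize}
\item Replacing $\Re(\overline{b_k}x^{N+1})$ by a degree-$\frac{N+3}{2}$ term removes exactly the harmonic component that drives $|\xi^*|\to\infty$.
\item Smallness cannot help either. A homogeneous term of degree $d$ in $V-1$ contributes a priori $O(\lambda^{\frac{d}{2(N+1)}})$ to $\int_{B_1}R_\lambda PZ_\lambda$ (the scaling of Lemma \ref{santa00}). For $d=\frac{N+3}{2}$ this is $\lambda^{\frac{N+3}{4(N+1)}}\gg\lambda^{1/2}$ when $N\ge3$; your exponent is off by a factor $2$.
\end{itemize}
Such a term can therefore only be harmless by symmetry.

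It is harmless by symmetry, automatically, so you need neither an extension of Theorem \ref{th5} nor uniformity in $k$. The hypotheses of Theorem \ref{th5} say nothing about homogeneous terms ${\rm q}_i$ with $\frac{N+1}{2}<i\le N$. In its proof (Proposition \ref{auxaux}, case $2i_0=N+1$, identity \eqref{santa33tris}), every such ${\rm q}_i$ has no radial mode, by the Laplacian conditions and Lemma \ref{finallemma}. Its only angular frequencies are $1\le\ell\le i<N+1$. Lemma \ref{copy0} therefore annihilates its pairing with the $\frac{2\pi}{N+1}$-symmetric function $|x|^{2N}e^{W_\lambda}Z_\lambda$; parity plays no role.

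So you just add a fixed harmonic term of degree $\frac{N+3}{2}$, as the paper does:
\[
V_\delta(x)=1+\tfrac14\Re\big(x^{\frac{N+3}{2}}\big)+\tfrac14|x|^{N+1}+\tfrac{1-\delta}{4}\Re\big(x^{N+1}\big).
\]
This potential:
\begin{itemize}
\item is uniformly positive and bounded in $C^{N+2}(\overline B_1)$;
\item has vanishing derivatives of order $\le\frac{N+1}{2}$ and vanishing Laplacians of order $<\frac{N+1}{2}$;
\item satisfies $\Delta^{\frac{N+1}{2}}V_\delta(0)=\frac{\kappa_N}{4}$, $\beta^\delta_{N+1}=\frac{1-\delta}{4}$ and $\partial_{x_1}^{\frac{N+3}{2}}V_\delta(0)=\frac14\big(\frac{N+3}{2}\big)!$.
\end{itemize}
Theorem \ref{th5} applies to each fixed $\delta$. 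Since $\lambda_k$ is chosen after $\delta_k$ in the diagonal step, nothing needs to be uniform in $\delta$.
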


\vspace{2mm}

\begin{rmk}
Let $N \geq 3$ be odd. Then it is immediate that
$$
\frac{N+1}{2} \geq \max\big\{2^n+1 \;\big|\; n \in \mathbb{N}\cup\{0\},\; 2^{n+1} \leq N\big\}.
$$
Moreover, equality holds if and only if $N-1 \in 2^{\N}$ (in particular, for $N=3,5,9,\ldots$).
For such values, the example of non-simple blow-up provided by Theorem \ref{mest} shows that the upper bound on the order of vanishing derivatives obtained in Theorem \ref{thweizhang} is optimal.

On the other hand, if $N-1 \notin 2^{\mathbb{N}}$, the above inequality is strict. In this case, there is a gap in the range of orders of vanishing derivatives for which the occurrence of non-simple blow-up remains an open problem.
\end{rmk}

\begin{rmk} For $N=1$   Laplacian-vanishing result  in \cite{WeiZhang2} requires $\Delta V_k(0)\to 0$  for non-simple blow-up under the usual uniform coefficient, mass, and boundary-oscillation assumptions. However, the statement in \cite{WeiZhang2} omits an additional nondegeneracy assumption, namely  $det D^2V_k(0)\geq c$, which is needed for the result to hold. Under this additional assumption, there is no contradiction with the $N=1$ construction in Theorem \ref{mest}. Indeed, the potentials $V_k$ arising in our construction do not satisfy this nondegeneracy condition, and hence the Laplacian-vanishing result does not apply to them.  \end{rmk}

The proofs are based on singular perturbation techniques. Broadly speaking, the first step is to build an approximate solution that is accurate enough to serve as a starting point for the analysis. Guided by the expected asymptotic behavior, this approximate profile is chosen to be a bubble of the form \eqref{bubble}, with a suitable parameter $\tau=\tau(\la,\xi)$. 
We then look for a true solution of \eqref{eq0}--\eqref{eqq0} in a small neighborhood of this first approximation. A key ingredient, as is standard in singular perturbation theory, is the nondegeneracy of the explicit family of solutions to the limiting Liouville equation \eqref{limit}: namely, every bounded element in the kernel of the linearized operator arises from differentiating the family with respect to its parameters, as shown in \cite{delespomu}. This property allows us to study the invertibility of the linearized operator associated with \eqref{eq0}--\eqref{eqq0} under suitable orthogonality conditions.

Next, we introduce an intermediate problem, and a fixed point argument yields a solution to an auxiliary equation, which turns out to be solvable for every choice of $\xi$. Finally, by testing the auxiliary equation against the kernel functions of the linearized operator, we obtain a reduced finite-dimensional equation. In order to construct an exact solution of \eqref{eq0}--\eqref{eqq0}, the parameter $\xi$ must therefore be a zero of this reduced map. At first order, the only zero is the trivial one, $\xi=0$, which corresponds to a radially symmetric solution. Thus, the analysis requires higher-order expansions to verify the persistence of a zero in a neighbourhood of $\xi=0$. The main technical difficulty arises in the proof of Theorem~\ref{th5}, where a more refined expansion is needed to detect a nontrivial zero $\xi$, leading to a genuinely non-symmetric solution.   

\

The paper is structured as follows. In Section 2, we fix the notation, recall some preliminary results, and construct the approximate solution. Section 3 is devoted to the analysis of the linearized problem; for complete proofs, we refer to \cite{delkomu} and \cite{espogropi}. In Section 4, we estimate the error generated by the approximation, which constitutes the most technical part of the work. Section 5 is concerned with the resolution of an auxiliary problem via a contraction mapping argument. Finally, in Section 6, we complete the proofs of Theorems  \ref{th4}, \ref{th5}, and \ref{mest}. Appendix A gathers a number of auxiliary lemmas used throughout the paper.

\

\noindent{\bf{NOTATION}}: In our estimates throughout the paper, we will frequently denote by $C>0$, $c>0$ fixed
constants, that may change from line to line, but are always
independent of the variables under consideration. We also use the
notations $O(1)$, $o(1)$, $O(\lambda)$, $o(\lambda)$ to describe
the asymptotic behaviors of quantities in a standard way.

{\Large{\bf{\section{Approximate Solution}}}}

Throughout the paper we identify $x=(x_1,x_2)\in\R^2$  with the complex variable $x=x_1+{\rm i} x_2$. 

In order to construct a sequence of bubbling solutions for the problem \eqref{eq0}-\eqref{eqq0}, we now give a construction of a suitable approximate solution.

Let us define the \textit{bubble}
$$W_\la=W_{\la, \xi}=
\log  {\la\over (\hat\la+|x^{N+1}- \xi|^2)^2},\qquad \hat\la:=\frac{\la}{8(N+1)^2} \,$$ 
which corresponds to a solution of the limit problem \eqref{limit}.

To obtain a better first approximation, we need to modify the functions $W_\la$   in order to satisfy the zero boundary condition. Precisely, we consider the projections $P W_\la $ onto the space $ H^1_0(B_1)$ of
$W_\la$, where the projection  $P:H^1(\R^2)\to  H^1_0(B_1)$ is
defined as the unique solution of the problem
$$
 \Delta P v=\Delta v\quad \hbox{in}\ B_1,\qquad  P v=0\quad \hbox{on}\ \partial B_1.
$$
Let ${\cal H}(x,y)$ denote the  regular part of the Green function in $B_1$:
\beq\label{regG}{\cal H}(x,y)=\frac{1}{2\pi}\log\Big(|x|\Big|y-\frac{x}{|x|^2}\Big|\Big)=\frac{1}{4\pi}\log\big(|x|^2|y|^2+1-2 \Re(\overline x  y)\big).
\eeq  
Let us take $\xi$ in a small neighborhood of $0$: observe that  the function ${\cal H}(x^{N+1},\xi)$ is harmonic in $B_1$ and satisfies ${\cal H}(x^{N+1},\xi)=\frac{1}{2\pi}\log |x^{N+1}-\xi|$ on $\partial B_1$. A straightforward computation gives that for any $x\in\partial B_1$ 
$$\begin{aligned}&PW_\la- W_\la+\log\la-8\pi{\cal H}(x^{N+1},\xi)\\ &=-W_\la+\log\la-4\log |x^{N+1}-\xi|
=2\log\bigg(1+\frac{\hat\la}{|x^{N+1}-\xi|^2}\bigg)
\\ & =2\frac{\hat\la}{|x^{N+1}-\xi|^2}+O(\la^2)=\frac{2\hat\la}{ 1+O(|\xi|)}+O(\la^2)=2\hat\la+O(\la|\xi|)+O(\la^2)\end{aligned}$$
uniformly for $\xi$ in a small neighborhood of $0$. 
Then the maximum principle applies and implies
the following asymptotic expansion
\beq\label{pro-exp1}\begin{aligned}
 PW_\la&= W_\la-\log\la+8\pi{\cal H}(x^{N+1},\xi)+2\hat\la+O(\la|\xi|)+O(\la^2)\\ &=-2\log\(\hat\la+|x^{N+1}- \xi|^2\)+8\pi{\cal H}(x^{N+1},\xi)+2\hat\la+O(\la|\xi|)+O(\la^2)
\end{aligned}\eeq 
uniformly for $x\in \overline B_1$ and $\xi $ in a small neighborhood of $0$.

We look for a solution to \eqref{eq0}-\eqref{eqq0} in a small neighborhood of the first approximation, namely a solution of the form
 $$v_\la=PW_\la
 + {\phi}_\la,$$ where the remainder term
$\phi_\la$ is small in
$H^1_0(B_1)$.

We conclude this section by introducing some notation and recalling a few standard facts that will be used throughout the paper. We  denote by  $\|\cdot\|$ and $\|\cdot\|_p$  the norms in  the space $ H^1_0(B_1)$ and $L^p(B_1)$, respectively, namely
 $$\|u\|:=\|u\|_{ H^1_0(B_1)}
 ,\qquad \|u\|_p:=\|u\|_{L^p(B_1)}
 \quad \forall u\in  H^1_0(B_1).$$

In the next lemma we recall the well-known Moser-Trudinger inequality
(\cite{Moe, Tru}).

\vspace{2mm}

\begin{lemma}\label{tmt} There exists $C>0$ such that 
 $$\int_{B_1} e^{\frac{4\pi u^2}{\|u\|^2}}dy\le C \quad \forall u\in{ H}^1_0(B_1).$$ 
 In particular,  for any $q\geq 1$
 $$\| e^{u}\|_{q}\le  
 C_q e^{{\frac{q}{ 16\pi}}\|u\|^2}\quad \forall  u\in{H}^1_0(B_1).$$

\end{lemma}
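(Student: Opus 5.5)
The statement is the classical Moser--Trudinger inequality (\cite{Moe, Tru}) together with a routine corollary. I would take the first inequality as known from the literature rather than reprove it. If a self-contained argument were wanted, the standard route is the following. By density, assume $u\in C^\infty_c(B_1)$. Replace $u$ by $|u|$ and then by its Schwarz symmetrization $u^*$. This leaves the integral $\int_{B_1} e^{4\pi u^2/\|u\|^2}$ unchanged and does not increase $\|u\|$, by the P\'olya--Szeg\H{o} inequality. So it suffices to treat radial nonincreasing $u$. The substitution $|x|=e^{-t/2}$ and $w(t)=\sqrt{4\pi}\,u(x)$ turns the claim into Moser's one-dimensional lemma: if $\int_0^\infty \dot w^2\,dt\le 1$ and $w(0)=0$, then $\int_0^\infty e^{w^2-t}\,dt\le C$. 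This is the only genuinely delicate step, and I would simply cite \cite{Moe} for it.

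For the second inequality, fix $q\ge1$ and $u\in H^1_0(B_1)$ with $u\neq0$; the case $u=0$ is trivial. Write
$$qu=2\Big(\frac{\sqrt{4\pi}\,|u|}{\|u\|}\Big)\Big(\frac{q\|u\|}{2\sqrt{4\pi}}\Big)$$
and apply the elementary inequality $2ab\le a^2+b^2$. This gives
$$q|u|\le \frac{4\pi u^2}{\|u\|^2}+\frac{q^2\|u\|^2}{16\pi}.$$
Exponentiating and integrating over $B_1$, then using the first inequality, yields
$$\int_{B_1} e^{q u}\,dy\le \int_{B_1} e^{q|u|}\,dy\le C\,e^{\frac{q^2}{16\pi}\|u\|^2}.$$
Taking the $q$-th root gives
$$\|e^u\|_q\le C^{1/q}e^{\frac{q}{16\pi}\|u\|^2},$$
which is the claim with $C_q=C^{1/q}$.

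The only obstacle is the sharp constant $4\pi$ in the first inequality, which rests on the symmetrization argument and Moser's one-dimensional lemma; the corollary follows from it by Young's inequality.
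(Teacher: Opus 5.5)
Your proposal is correct and matches the paper, which takes the sharp inequality directly from Moser and Trudinger without proof; your derivation of the $L^q$ bound from $q|u|\le \frac{4\pi u^2}{\|u\|^2}+\frac{q^2\|u\|^2}{16\pi}$ is the standard route to the corollary, which the paper leaves implicit. The only slip is cosmetic: the product you display equals $q|u|$, not $qu$, and $q|u|$ is what you actually use in the next step.
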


As mentioned in the introduction, our proof relies on  singular
perturbation methods, for which the nondegeneracy of the functions used to construct  our approximating solution is essential. The next
proposition is devoted to the nondegeneracy of the finite mass 
solutions of the singular  Liouville equation.

\begin{prop}
\label{esposito} Assume that $\phi:\R^2\to\R$  solves the problem
\begin{equation}\label{l1}
-\Delta \phi =8(N+1)^2{|y|^{2N}\over (1+|y^{N+1}-\xi|^2)^2}\phi\;\;
\hbox{in}\ \rr^2,\quad \int_{\R^2}|\nabla
\phi(y)|^2dy<+\infty.
\end{equation}
 Then there exist $c_0,\,c_1,\, c_2\in\R$
such that
$$\phi(y)=c_0  Z_0+ c_1Z_1 +c_2Z_2$$ where
$$Z_0(y):   = {1-|y^{N+1}-\xi|^2\over 1+|y^{N+1}-\xi|^2} ,\ \; \;Z_1(y):={ \Re(y^{N+1}-\xi)\over  1+|y^{N+1}-\xi|^2} 
,\ \;\;Z_2(y):={ \Im(y^{N+1}-\xi)\over  1+|y^{N+1}-\xi|^2}.
$$ \end{prop}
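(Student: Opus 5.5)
The plan is to reduce to the nondegeneracy of the regular Liouville equation through the change of variables $z=y^{N+1}$. Write $f(z)=\phi(y)$, treating $f$ as a function that is multivalued on the $(N+1)$-sheeted cover, or better, work in polar coordinates. Since $|\partial_y(y^{N+1})|^2=(N+1)^2|y|^{2N}$, the equation \eqref{l1} is conformally equivalent to
$$-\Delta_z f=\frac{8}{(1+|z-\xi|^2)^2}f$$
on the branched cover. A finite-energy $\phi$ on $\R^2$ corresponds to a bounded solution; indeed Dirichlet energy is conformally invariant, and standard elliptic estimates together with $\int|\nabla\phi|^2<\infty$ give boundedness and a limit at infinity.

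First step: reduce to $\xi=0$. For $\xi\neq0$ the potential is not radial in $y$, which makes this the delicate case. My first attempt is to avoid the cover altogether. I decompose $\phi$ into angular Fourier modes $\phi=\sum_m \phi_m(r)e^{im\theta}$ in $y$. Under $z=y^{N+1}$, the modes with $m\equiv j \pmod{N+1}$ correspond to functions $f$ on the $z$-plane with monodromy $e^{2\pi i j/(N+1)}$. Grouping $\phi$ by residue class $j$, each group $\phi^{(j)}$ still solves \eqref{l1}, because the potential is invariant under $y\mapsto e^{2\pi i/(N+1)}y$. Then $f_j(z)=\phi^{(j)}(z^{1/(N+1)})$ solves the regular linearized equation on $\C\setminus\{0\}$, twisted by the character $j$.

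Case $j=0$: $f_0$ is single-valued, bounded, and solves the equation on $\C\setminus\{0\}$ with finite energy. The isolated singularity at $0$ is removable, since $f_0$ is bounded and $H^1$. The classical nondegeneracy (Baraket–Pacard) then gives $f_0$ as a combination of the three translated/dilated kernels. Pulling back yields $c_0Z_0+c_1Z_1+c_2Z_2$.

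Case $j\neq0$: I must show $f_j\equiv0$. This is the main obstacle. Near $z=0$, $\phi^{(j)}$ smooth implies $|f_j(z)|\lesssim |z|^{j/(N+1)}$, while as $|z|\to\infty$, $f_j$ decays like $|z|^{-j/(N+1)}$ (or $|z|^{-(1-j/(N+1))}$). I would apply the Möbius/stereographic picture: after translating $z\mapsto z-\xi$, the equation becomes $\Delta_{S^2}f+2f=0$ on the sphere, but with $f$ having monodromy around the two points $p_0=$ image of $z=0$ and $\infty$. Such an $f$ is a section of a flat line bundle on $S^2\setminus\{p_0,\infty\}$. I would show that eigenvalue $2$ does not occur for this twisted Laplacian with the growth conditions above. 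One route is to rotate the sphere so that $p_0$ and $\infty$ become antipodal-free data and separate variables. Alternatively, I would use the fact that the twisted operator's spectrum on $S^2$ with a cone-type twist parameter $\alpha=j/(N+1)\in(0,1)$ is $\{(k+\alpha)(k+\alpha+1)\}$ when $p_0,\infty$ are antipodal, which avoids $2$. For the non-antipodal case I expect the cleanest argument is the one in \cite{delespomu}: prove it directly by a Pohozaev/ODE argument for $\xi=0$ via Fourier modes, where each mode gives an ODE of hypergeometric type with explicit solutions. Then extend to $\xi\neq0$ by a continuity/Fredholm index argument in $\xi$, using that the kernel dimension is upper semicontinuous and the three functions $Z_i$ always lie in it.

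For $\xi=0$ the radial ODE for mode $m$,
$$\phi_m''+\frac1r\phi_m'-\frac{m^2}{r^2}\phi_m+\frac{8(N+1)^2r^{2N}}{(1+r^{2N+2})^2}\phi_m=0,$$
becomes, with $s=r^{N+1}$, the standard Liouville mode equation with index $m/(N+1)$. This has bounded solutions iff $m/(N+1)\in\{0,1\}$. Those solutions give $Z_0$ and $Z_1,Z_2$.
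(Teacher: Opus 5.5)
Your reduction is sound as far as it goes. The potential in \eqref{l1} is invariant under $y\mapsto e^{2\pi{\rm i}/(N+1)}y$, so $\phi$ splits into isotypic pieces $\phi^{(j)}$, each solving \eqref{l1}. The invariant piece descends via $z=y^{N+1}$ to a bounded solution of the regular linearized Liouville equation on $\C$ (the puncture is removable), and Baraket--Pacard then gives exactly $c_0Z_0+c_1Z_1+c_2Z_2$. For $\xi=0$, your mode-by-mode ODE analysis does kill every $j\neq 0$.

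The gap is the case $j\neq0$, $\xi\neq0$. This is the real content of the statement, and the argument you offer for it does not work. Upper semicontinuity of the kernel dimension only says that the set of $\xi$ where the kernel is exactly three-dimensional is open. That set contains a neighbourhood of $0$, but nothing prevents the kernel from jumping up at some $|\xi|=r_1>0$. In your sphere picture, this would be an eigenvalue of the twisted Laplacian passing through $2$ as the cone points move. The Fredholm index, which is $0$ throughout, gives no information on the kernel dimension. Your other suggestion does not help either. Rotations of $S^2$ preserve the spherical distance between the two cone points, and that distance is fixed by $|\xi|$, so you cannot move them into antipodal position. Moreover, the antipodal configuration is precisely $\xi=0$.

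This is not a technicality. The proposition is used after the rescaling $x=\hat\la^{\frac{1}{2(N+1)}}y$ for rescaled parameters of size up to $M$, with $M$ large. In Theorem \ref{mest} one even needs $|\xi_k^*|\to\infty$. So a small-$\xi$ result is insufficient, and you need an argument valid at each fixed $\xi$.

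The paper does not prove this itself. It takes boundedness of finite-energy solutions from \cite[Theorem 6.1]{gpistoia} and then applies the nondegeneracy theorem of \cite{dem}, which holds for every $\xi$.

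If you want a self-contained proof, one route that avoids separation of variables is the holomorphic structure of the linearized Liouville equation. Set $\tilde u=\log\frac{8(N+1)^2|y|^{2N}}{(1+|y^{N+1}-\xi|^2)^2}$. For any solution of \eqref{l1}, the quantity $\phi_{yy}-\tilde u_y\phi_y$ is holomorphic on $\C\setminus\{0\}$. It has at most a simple pole at $0$, since $\tilde u_y=N/y+{}$smooth, and it is $O(|y|^{-3})$ at infinity by the Kelvin transform. Hence it vanishes identically. Then $\phi_y=e^{\tilde u}\,\overline{h}$ with $h$ a Laurent polynomial of bounded degree. Identifying the admissible $h$ (real, smooth, bounded $\phi$) is finite-dimensional linear algebra.
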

\begin{proof}
In \cite[Theorem 6.1]{gpistoia} it was proved that any solution
$\phi$ of \eqref{l1} is actually a bounded solution. Therefore we
can apply the result in \cite{dem} to conclude that $\phi= c_0 Z_0 + c_1
Z_1 + c_2 Z_2$ for some $c_0,c_1,c_2\in \R$.

\end{proof}

{\Large{\section{Analysis of the linearized operator}}}
According to Proposition \ref{esposito}, by the change of variable $x=\hat\la^{\frac{1}{2(N+1)}} y$, we immediately get that  all solutions $\psi $  of
$$
-\Delta \psi= {\la|x|^{2N}\over (\hat\la+|x^{N+1 }-\xi|^2)^2}\psi =|x|^{2N} e^{W_\la}\psi\quad \hbox{in}\;\; \rr^2,\qquad \intr |\nabla \psi|^2 dx<+\infty$$
are linear combinations of the functions
$$Z^0_{\la}(x)={\hat\la-|x^{N+1}-\xi|^2\over \hat\la+|x^{N+1 }-\xi|^2},\ Z^1_{\la}(x)=
{{\hat\la}^\frac12\Re(x^{N+1 }-\xi)\over  \hat\la+|x^{N+1 }-\xi|^2},\ Z^2_{\la}(x)=
{ {\hat\la}^\frac12 \Im(x^{N+1 }-\xi)\over  \hat\la+|x^{N+1 }-\xi|^2}
.$$
We introduce their projections $PZ^j_{\la}$ onto $H^1_0(B_1).$ It is immediate that
 $$
PZ^0_{\la}(x)=Z^0_{\la}(x)+1+ O(\la)
$$
 and
  \begin{equation}\label{pzi}
PZ^j_{\la}(x)=Z^j _{\la}(x) + O\big({\la}^\frac12\big),\;\; j=1,2\end{equation}
uniformly with respect to $x\in\overline B_1$ and $\xi$ in a small neighborhood of $0$.
\\
We need to expand  the projections $PZ_\la^j$ to a higher order with respect to \eqref{pzi}.

\vspace{2mm}

\begin{lemma}
\label{pziraff} The following holds 
$$\begin{aligned}&PZ^1_\la(x)=Z_\la^1 (x)-\hat\la^\frac12\Re (x^{N+1})+O(\la^\frac12 |\xi|)+O(\la^\frac32\big),\\ &PZ^2_\la(x)=Z_\la^2 (x)-\hat\la^\frac12\Im (x^{N+1})+O(\la^\frac12 |\xi|)+O(\la^\frac32\big)\end{aligned}$$
uniformly for $\xi$ in a small neighborhood of $0$.
\end{lemma}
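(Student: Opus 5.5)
The plan is to compare $PZ^1_\la$ with $Z^1_\la$ through their difference $\psi:=PZ^1_\la-Z^1_\la$. Since $\Delta PZ^1_\la=\Delta Z^1_\la$ in $B_1$, this difference is harmonic in $B_1$ with boundary values $\psi=-Z^1_\la$ on $\partial B_1$. The idea is to write down an explicit harmonic function that matches these boundary values up to the claimed error, and then conclude with the maximum principle, exactly as for \eqref{pro-exp1}. The case $j=2$ is identical, with $\Re$ replaced by $\Im$.

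First, expand the boundary data. For $|x|=1$ and $\xi$ small we have $|x^{N+1}-\xi|\geq 1-|\xi|\geq 1/2$, so
$$Z^1_\la(x)=\frac{\hat\la^{\frac12}\Re(x^{N+1}-\xi)}{|x^{N+1}-\xi|^2}\Big(1+\frac{\hat\la}{|x^{N+1}-\xi|^2}\Big)^{-1}=\hat\la^{\frac12}\frac{\Re(x^{N+1}-\xi)}{|x^{N+1}-\xi|^2}+O(\la^{\frac32}).$$
Next, on $\partial B_1$ we have $\Re(x^{N+1}-\xi)/|x^{N+1}-\xi|^2=\Re\big(1/\overline{(x^{N+1}-\xi)}\big)$. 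Using $|x|=1$, we get $1/\overline{x^{N+1}}=x^{N+1}$. Hence
$$\frac{1}{\overline{x^{N+1}}-\overline\xi}=\frac{x^{N+1}}{1-\overline\xi x^{N+1}}=x^{N+1}+O(|\xi|)$$
uniformly on $\partial B_1$. Therefore $Z^1_\la=\hat\la^{\frac12}\Re(x^{N+1})+O(\la^{\frac12}|\xi|)+O(\la^{\frac32})$ on $\partial B_1$.

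Finally, $-\hat\la^{\frac12}\Re(x^{N+1})$ is harmonic in $B_1$. The function $\psi+\hat\la^{\frac12}\Re(x^{N+1})$ is therefore harmonic, and its boundary values are $O(\la^{\frac12}|\xi|)+O(\la^{\frac32})$. By the maximum principle the same bound holds uniformly in $\overline B_1$, which gives the claim.

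One could even take the exact harmonic extension $\Re\big(x^{N+1}/(1-\overline\xi x^{N+1})\big)$, which is holomorphic in $B_1$ for $|\xi|<1$, but that is not needed. No step is a genuine obstacle. The only point requiring care is the uniformity of the $O(|\xi|)$ expansion on the boundary, which follows from the lower bound $|x^{N+1}-\xi|\geq 1/2$.
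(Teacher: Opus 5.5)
Your proposal is correct and follows essentially the same route as the paper. You expand $Z^1_\la$ on $\partial B_1$ as $\hat\la^{\frac12}\Re(x^{N+1})+O(\la^{\frac12}|\xi|)+O(\la^{\frac32})$, subtract the harmonic function $\hat\la^{\frac12}\Re(x^{N+1})$, and apply the maximum principle to the resulting harmonic difference. Your boundary expansion via $\Re\big(1/\overline{(x^{N+1}-\xi)}\big)$ is a slightly more explicit version of the paper's direct expansion of the denominator, $\hat\la+|x^{N+1}-\xi|^2=1+\hat\la+O(|\xi|)$.
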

\begin{proof} Observe that 
$$\begin{aligned}\hbox{if } |x|=1: \;\;Z_\la^1(x)&=\frac{\hat\la^\frac12\Re(x^{N+1}-\xi)}{\hat\la+|x^{N+1}-\xi|^2}=\frac{\hat\la^\frac12\Re(x^{N+1}-\xi)}{1+\hat\la+O(|\xi|)
}\\ &=\hat\la^\frac12\Re(x^{N+1}-\xi)\big(1+O(\la)+O(|\xi|)\big)
\\ &=\hat\la^\frac12\Re(x^{N+1}) +O(\la^\frac32)+O(\la^\frac12|\xi|)
.\end{aligned}$$
Therefore, if we set  $$ \hat{Z}_\la^1:= Z_\la^1 (x)-\hat\la^\frac12\Re(x^{N+1})$$ we get $$\hat Z_\la^1=O(\la^\frac32)+O(\la^\frac12|\xi|)
 \hbox{ if }|x|=1$$ and 
$$-\Delta \hat Z_\la^1 (x)=-\Delta Z_\la^1(x)=-\Delta PZ_\la^1(x) \hbox{ in }B_1.$$
Hence, since by construction $PZ_\la^1=0$ for $|x|=1$, the maximum principle applies and gives 
$$PZ_\la^1= \hat Z_\la^1+O(\la^\frac32)+O(\la^\frac12 |\xi|) \hbox{ in } B_1.$$
\end{proof}

From now on, let $M>0$ be a sufficiently large number, to be chosen later.

We perform the following estimates.

\vspace{2mm}

\begin{lemma}\label{aux00}   Let
 $0\leq m<2N+2$ and $p\geq 1$ be fixed. The following holds:\beq\label{aux00esti1}
\|  |x|^{2N+m}e^{W_{\la}}\|_p
\leq C\la^{\frac{m}{2(N+1)}}\la^{-\frac{p-1}{(N+1)p}} ,\quad \|  \la |x|^{2N+m} e^{PW_{\la}}\|_p
\leq C\la^{\frac{m}{2(N+1)}}\la^{-\frac{p-1}{(N+1)p}} \eeq
uniformly for $|\xi|\leq M\hat\la^\frac12$. 
\end{lemma}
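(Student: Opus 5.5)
We reduce the estimate to a computation in rescaled variables. First we handle the term with $W_\la$; the term with $PW_\la$ then follows from the expansion \eqref{pro-exp1}.

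\emph{The bubble term.} We have $|x|^{2N+m}e^{W_\la}=\la|x|^{2N+m}(\hat\la+|x^{N+1}-\xi|^2)^{-2}$. Substitute $x=\hat\la^{\frac{1}{2(N+1)}}y$ and $\xi=\hat\la^{\frac12}\xi'$, so that $|\xi'|\le M$. Then $x^{N+1}-\xi=\hat\la^{\frac12}(y^{N+1}-\xi')$, and
$$|x|^{2N+m}e^{W_\la}=C\,\la^{\frac{2N+m}{2(N+1)}}\la^{-1}\frac{|y|^{2N+m}}{(1+|y^{N+1}-\xi'|^2)^2}.$$
Since $dx=\hat\la^{\frac1{N+1}}dy$, we get
$$\||x|^{2N+m}e^{W_\la}\|_p^p\le C\la^{\frac{p(2N+m)}{2(N+1)}-p+\frac1{N+1}}\int_{\R^2}\frac{|y|^{(2N+m)p}}{(1+|y^{N+1}-\xi'|^2)^{2p}}dy.$$
The exponent equals $\frac{pm}{2(N+1)}-\frac{p-1}{N+1}$. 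Taking the $p$-th root gives exactly $\la^{\frac{m}{2(N+1)}}\la^{-\frac{p-1}{(N+1)p}}$.

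It remains to bound the integral uniformly for $|\xi'|\le M$.
\begin{itemize}
\item For $|y|\ge R$ with $R=R(M)$ large, we have $|y^{N+1}-\xi'|\ge \frac12|y|^{N+1}$. The integrand is then at most $C|y|^{(2N+m)p-4(N+1)p}=C|y|^{-(2N+4-m)p}$. Since $m<2N+2$ gives $(2N+4-m)p>2p\ge2$, this part is integrable.
\item On $|y|\le R$ the integrand is bounded by $R^{(2N+m)p}$.
\end{itemize}
Hence the integral is bounded by a constant depending only on $M,p,m,N$. This is the only place where $m<2N+2$ is used, and checking this decay condition is the main point of the proof.

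\emph{The projection term.} By \eqref{pro-exp1},
$$\la e^{PW_\la}=e^{W_\la}\,e^{8\pi{\cal H}(x^{N+1},\xi)+O(\la)}.$$
The function ${\cal H}(x^{N+1},\xi)$ is bounded on $\overline B_1$ uniformly for $\xi$ small. By \eqref{regG} its argument $|x|^{2N+2}|\xi|^2+1-2\Re(\overline{x^{N+1}}\xi)$ is close to $1$, so the logarithm is bounded. Therefore $\la e^{PW_\la}\le Ce^{W_\la}$, and the second estimate follows from the first.
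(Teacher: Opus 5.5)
Your proposal is correct and follows essentially the same route as the paper: rescale by $\hat\la^{\frac{1}{2(N+1)}}$ to extract the factor $\la^{\frac{m}{2(N+1)}}\la^{-\frac{p-1}{(N+1)p}}$, bound the rescaled integral uniformly for $|\xi'|\le M$ using $m<2N+2$, and then use \eqref{pro-exp1} to get $\la e^{PW_\la}\le Ce^{W_\la}$. Your split into $|y|\le R$ and $|y|\ge R$ just spells out the uniform comparison $1+|y^{N+1}-\xi'|^2\ge c\,(1+|y|^{2(N+1)})$, which the paper uses implicitly when it writes $(1+|z|^{2(N+1)})$ in the denominator.
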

\begin{proof} 
We compute $$\begin{aligned}\||x|^{2N+m} e^{W_{\la}}\|_p^p&=
\la^{p}\into\frac{|x|^{(2N+m )p}}{(\hat\la+|x^{N+1}-\xi|^2)^{2p}}dx
\leq C\la^{ m \frac{p}{2(N+1)}-\frac{p-1}{N+1}} \intr\frac{|z|^{(2N+m )p}}{(1+|z|^{2(N+1)})^{2p}}dz
\end{aligned}$$ uniformly for $|\xi|\leq M\hat\la^\frac12$.
Taking into account that the last integral is finite for   $0\leq m<2N+2$ and $p\geq 1$ 
we deduce the first part of  \eqref{aux00esti1}.
To prove the second part it is sufficient to observe that 
by \eqref{pro-exp1} 
we derive
$$\la e^{PW_{\la}}= e^{W_{\la}+O(1)}
=O(e^{W_{\la}}).$$
\end{proof}

Let us consider the following linear problem: given
$ h\in H_0^1(B_1)$,  find a function $\phi\in  H^1_0(B_1)$ and constants $c_1,c_2\in\R$ satisfying
\begin{equation}\label{lla2}
\left\{\begin{aligned}&-\Delta \phi   -\la V(x)|x|^{2N} e^{P{W}_\la}\phi=\Delta h+\sum_{j=1,2}c_j Z^j_{\la} |x|^{2N}e^{W_{\la}}\\ &\into \nabla \phi\nabla PZ_\la^j=0\;\;j=1,2\end{aligned}\right..
\end{equation}

In order to solve problem \eqref{lla2}, we need to establish an a priori estimate. For the proof we refer to \cite{delkomu} (Proposition 3.1) or \cite{espogropi} (Proposition 3.1) for the corresponding result in the case of regular bubbles; the proof carries over, with only minor modifications, to the singular bubbles considered here.

\vspace{2mm}

\begin{prop}\label{inv} 
There exist $\lambda_0>0$ and $C>0$ such that for any $\la \in(0, \la_0)$, any $\xi\in \R^2$ with $|\xi|<M\hat\la^\frac12$ and   any $h\in  H^1_0(B_1)$, if  
$(\phi,c_1,c_2)\in  H^1_0(B_1)\times \R^2$  solves  \eqref{lla2}, then the following holds $$\|\phi\|    \leq C  |\log\la |    \|h\| .$$
\end{prop}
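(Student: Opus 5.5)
We argue by contradiction, following the blow-up scheme of \cite{delkomu, espogropi}. Suppose there are $\la_n\to0$, $|\xi_n|<M\hat\la_n^{1/2}$, $h_n\in H^1_0(B_1)$ and solutions $(\phi_n,c_{1,n},c_{2,n})$ of \eqref{lla2} with $\|\phi_n\|=1$ and $|\log\la_n|\,\|h_n\|\to0$. Rescale by setting $\tilde\phi_n(y)=\phi_n(\hat\la_n^{\frac1{2(N+1)}}y)$ and $\tilde\xi_n=\hat\la_n^{-1/2}\xi_n$, which stays bounded; up to a subsequence $\tilde\xi_n\to\xi_\infty$. By \eqref{pro-exp1}, the weight $\la V|x|^{2N}e^{PW_\la}$ becomes, after rescaling, $8(N+1)^2|y|^{2N}(1+|y^{N+1}-\tilde\xi_n|^2)^{-2}(1+o(1))$ locally uniformly, since $V(0)=1$.

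\textbf{Step 1: the multipliers vanish.} Test \eqref{lla2} against $PZ^i_\la$ and use the orthogonality conditions, Lemma \ref{pziraff} and Lemma \ref{aux00}. The matrix $\big(\int Z^j_\la|x|^{2N}e^{W_\la}PZ^i_\la\big)_{i,j}$ is a nonzero multiple of the identity plus $o(1)$. The remaining terms, namely $\int\nabla h\nabla PZ^i$ and $\int(-\Delta PZ^i_\la-\la V|x|^{2N}e^{PW_\la}PZ^i_\la)\phi$, are $O(\|h\|)+o(1)\|\phi\|$. Here one has to use $\nabla V(0)=0$ only mildly, together with the $L^p$ bounds in Lemma \ref{aux00} with $p$ close to $1$. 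This gives $|c_j|=O(\|h\|)+o(1)$.

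\textbf{Step 2: the rescaled limit is trivial.} The functions $\tilde\phi_n$ are bounded in $H^1_{loc}$ with uniformly bounded Dirichlet energy. They therefore converge weakly to some $\phi_\infty$ solving \eqref{l1} with $\xi=\xi_\infty$, so $\phi_\infty=\sum c_jZ_j$ by Proposition \ref{esposito}. The orthogonality conditions pass to the limit and kill $c_1$ and $c_2$.

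To kill $c_0$ we need the standard extra argument. Test the equation against the projection of the function $\frac{\hat\la-|x^{N+1}-\xi|^2}{\hat\la+|x^{N+1}-\xi|^2}$ and against $PW$-type functions whose Laplacian equals the bubble weight times $\log$-terms, as in \cite{delkomu}. This produces a relation of the form $c_0\,|\log\la_n|\to$ (something controlled by $|\log\la_n|\|h_n\|$), forcing $c_0=0$. This is where the factor $|\log\la|$ in the estimate enters.

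\textbf{Step 3: contradiction.} Multiply \eqref{lla2} by $\phi_n$:
$$1=\|\phi_n\|^2=\int\la V|x|^{2N}e^{PW_\la}\phi_n^2+O(\|h_n\|)+\sum_j c_j\int Z^j|x|^{2N}e^{W}\phi_n .$$
The first integral tends to $\int 8(N+1)^2\frac{|y|^{2N}}{(1+|y^{N+1}-\xi_\infty|^2)^2}\phi_\infty^2=0$. The tails are handled by Lemma \ref{aux00} combined with the Sobolev/Moser--Trudinger bound of Lemma \ref{tmt} to control $\phi_n$ in $L^q$. This gives $1\to0$, a contradiction.

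The main obstacle is Step 2's treatment of the radial mode $Z_0$. It is not excluded by orthogonality, and it produces the logarithmic loss. I expect to follow verbatim the test-function computation of \cite[Prop.~3.1]{delkomu}, with $x$ replaced by $x^{N+1}$ and the change of variables $z=y^{N+1}$ carried out (this covers the domain $N+1$ times). The resulting integrals are the regular ones multiplied by $N+1$, so the modifications are minor.
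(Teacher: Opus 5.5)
\textbf{Overall.} Your outline is the contradiction argument of \cite{delkomu, espogropi}, which is exactly what the paper invokes; the paper gives no proof of its own. The ingredients match: $\|\phi_n\|=1$ with $|\log\la_n|\|h_n\|\to0$, rescaling by $\hat\la_n^{\frac{1}{2(N+1)}}$, $c_j\to0$, identification of the limit through Proposition \ref{esposito}, and an extra test for the $Z_0$ mode. However, the tail control you propose in Step 3 fails, and your description of the $Z_0$ step places the logarithm in the wrong term.

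\textbf{The tail estimate in Step 3.} H\"older with Lemma \ref{aux00} and the Moser--Trudinger bound $\|\phi\|_{2q}^2\le Cq\|\phi\|^2$ only give
\[
\int_{\{|x|>R\hat\la^{\frac{1}{2(N+1)}}\}}\la|x|^{2N}e^{PW_\la}\phi^2\,dx\le C\,R^{-2N-2}\,\la^{-\frac{p-1}{(N+1)p}}\,\frac{p}{p-1}\,\|\phi\|^2 .
\]
Even the best choice of $p$ leaves a factor of order $|\log\la|$. This is sharp for general $\phi$ with $\|\phi\|=1$. Take the truncated logarithm $\min\{\log\frac1{|x|},\log\frac1\epsilon\}/\sqrt{2\pi\log\frac1\epsilon}$ with $\epsilon=\hat\la^{\frac{1}{2(N+1)}}$: the left side is then of order $R^{-2N-2}|\log\la|$. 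Its rescaling is of size $|\log\la|^{\frac12}$ on $B_1$. So your Step 2 claim that $\tilde\phi_n$ is bounded in $H^1_{loc}$ does not follow from the scale-invariant Dirichlet bound alone either.

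\textbf{The missing ingredient.} Test \eqref{lla2} with $\phi_n$. The $c_j$-term vanishes exactly, because $\int Z^j_\la|x|^{2N}e^{W_\la}\phi_n=\int\nabla PZ^j_\la\cdot\nabla\phi_n=0$. This gives $\int\la V|x|^{2N}e^{PW_\la}\phi_n^2=1+o(1)$, i.e.\ $\int\rho_n\tilde\phi_n^2\le C$ with $\rho_n(y)=8(N+1)^2|y|^{2N}(1+|y^{N+1}-\tilde\xi_n|^2)^{-2}$. Together with $\|\nabla\tilde\phi_n\|_{L^2(\R^2)}\le1$, this gives the local $H^1$ bound. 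For the tails, set $\hat\phi_n(w)=\tilde\phi_n(w/|w|^2)$. The Kelvin transform preserves the Dirichlet integral, so $\hat\phi_n$ is bounded in $H^1(B_1)$. Since $\rho_n\le C|y|^{-2N-4}$ at infinity,
\[
\int_{\{|y|>R\}}\rho_n\tilde\phi_n^2\le C\int_{B_{1/R}}\hat\phi_n^2\le CR^{-1}.
\]
Hence $\tilde\phi_n\to\phi_\infty$ strongly in the weighted $L^2$ space, and only then does Step 3 close.

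\textbf{The $Z_0$ step.} No relation of the form $c_0|\log\la_n|\to\cdots$ arises; if it did, no logarithmic loss would be needed. The correct mechanism uses two tests.
\begin{enumerate}[leftmargin=*]
\item Test with $PZ^0_\la=Z^0_\la+1+O(\la)$, using $-\Delta Z^0_\la=|x|^{2N}e^{W_\la}Z^0_\la$ and Lemma \ref{aux00} applied to $R_\la$. This gives $\int\la V|x|^{2N}e^{PW_\la}\phi_n=O(\|h_n\|)+O(\la_n^{\sigma})$ for some $\sigma>0$.
\item Test with $PW_\la=-2\log\hat\la-2\log(1+|y^{N+1}-\tilde\xi_n|^2)+O(\la^{\frac12})$. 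The constant $-2\log\hat\la_n$ multiplies $\int\la V|x|^{2N}e^{PW_\la}\phi_n$. That product tends to $0$ precisely because $|\log\la_n|\|h_n\|\to0$; this is the only place the logarithm enters.
\end{enumerate}
In the second test, $|\int\nabla h_n\cdot\nabla PW_\la|\le C|\log\la_n|^{\frac12}\|h_n\|\to0$. The $c_j$-terms and $\int|x|^{2N}e^{W_\la}\phi_n$ also tend to $0$. What remains, by Lemma \ref{copy} and with $\rho_\infty$ the limit weight, is
\[
2c_0\int\rho_\infty Z_0\log(1+|y^{N+1}-\xi_\infty|^2)\,dy=16(N+1)c_0\int_{\R^2}\frac{1-|z|^2}{(1+|z|^2)^3}\log(1+|z|^2)\,dz=-8\pi(N+1)c_0 .
\]
This forces $c_0=0$. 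With these two repairs your proposal becomes the standard proof the paper cites.
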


\

Next, for any $ p>1,$ let $$i^*_{p}:L^{p}(B_1)\to H^1_0(B_1)$$ be the
adjoint operator of the embedding
$i_{p}:H^1_0(B_1)\hookrightarrow L^{p\over p-1 }(B_1),$ i.e.
$u=i^*_{p}(v)$ if and only if $-\Delta u=v$ in $B_1,$ $u=0$ on
$\partial B_1.$ We point out that $i^*_{p}$ is a continuous
mapping, namely
\begin{equation}
\label{isp} \|i^*_{p}(v)\| \le c_{p} \|v\|_{p}, \ \hbox{for any} \ v\in L^{p}(B_1),
\end{equation}
for some constant $c_{p}$ which depends on $p.$
Next let us set
$$      {K }:=\hbox{span}\left\{PZ^1_{\la},\ PZ^2_{\la}\right\}$$
and
$$      {K ^\perp }:= \left\{\phi\in H^1_0(B_1)\ :\ \into \nabla \phi \nabla PZ^1_{\la}dx= \into \nabla \phi \nabla PZ^2_{\la}dx= 0\right\} $$
and  denote by
$$     \Pi : H^1_0(B_1)\to       {K },\qquad      {\Pi ^\perp}: H^1_0(B_1)\to       {K ^\perp }$$
the corresponding projections.
Let $      L: K^\perp \to K^\perp$  be the
linear operator defined by
$$
      L(     \phi):=  \Pi^{\perp}\Big(   {i^*_{p}}\big(       \la V(x)|x|^{2N}e^{PW_\la}     \phi \big)\Big) - \phi. 
$$
Notice that problem \eqref{lla2} reduces to $$L(\phi)=\Pi^\perp h, \quad \phi\in K^\perp.$$
 
 As a consequence of Proposition \ref{inv} we derive the invertibility of $L$.

\begin{prop}\label{ex} For any $p>1$ there exist $\lambda_0>0$ and $C>0$ such that for any $\la \in(0, \la_0)$, any $\xi\in\R^2$ with $|\xi|<M\hat\la^\frac12$ and 
 any $h\in K^\perp$  there is a unique solution $ \phi\in K^\perp$ to the problem $$L(\phi)=h.$$ In
particular, $L$ is invertible; moreover, $$\| L^{-1} \| \leq C
|\log \lambda |.$$

\end{prop}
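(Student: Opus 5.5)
The plan is the standard Fredholm argument: write $L=-(I-\mathcal K)$ with $\mathcal K$ compact on $K^\perp$. Existence then follows from uniqueness, and the bound on $\|L^{-1}\|$ comes directly from Proposition \ref{inv}.

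First, for fixed $\la$ and $\xi$, the weight $\la V(x)|x|^{2N}e^{PW_\la}$ is bounded in $\overline B_1$ (since $\hat\la>0$). Hence the map $\phi\mapsto \la V|x|^{2N}e^{PW_\la}\phi$ sends $H^1_0(B_1)$ continuously into $L^p(B_1)$. Composing with the compact embedding $H^1_0(B_1)\hookrightarrow L^p(B_1)$, with $i^*_p$ (continuous by \eqref{isp}) and with $\Pi^\perp$ shows that
$$\mathcal K(\phi):=\Pi^\perp\big(i^*_p(\la V|x|^{2N}e^{PW_\la}\phi)\big)$$
is a compact operator $K^\perp\to K^\perp$. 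Since $L=\mathcal K-I$, the Fredholm alternative says $L$ is surjective as soon as it is injective.

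Second, I will show that any $\phi\in K^\perp$ with $L(\phi)=h$, $h\in K^\perp$, solves \eqref{lla2} with the same $h$ (up to sign convention) and suitable constants $c_1,c_2$. The equation $L(\phi)=h$ means $i^*_p(\la V|x|^{2N}e^{PW_\la}\phi)-\phi-h\in K$. So there are $a_1,a_2$ with
$$-\Delta\phi-\la V|x|^{2N}e^{PW_\la}\phi=\Delta h+\sum_j a_j(-\Delta PZ^j_\la),$$
in weak form. Since $-\Delta PZ^j_\la=-\Delta Z^j_\la$, and by the kernel characterization before Lemma \ref{pziraff} this equals $|x|^{2N}e^{W_\la}Z^j_\la$ (up to the constant $8(N+1)^2$ normalizations), the extra term has exactly the form $\sum_j c_jZ^j_\la|x|^{2N}e^{W_\la}$. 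The orthogonality conditions hold because $\phi\in K^\perp$. Proposition \ref{inv} therefore gives $\|\phi\|\le C|\log\la|\,\|h\|$ whenever $\la<\la_0$ and $|\xi|<M\hat\la^{1/2}$.

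Taking $h=0$ gives $\phi=0$, so $L$ is injective, hence bijective by the first step, and the uniqueness claim is the same statement. The a priori estimate then reads $\|L^{-1}h\|\le C|\log\la|\,\|h\|$, which is the operator-norm bound. The constants are uniform because those of Proposition \ref{inv} are.

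The main point needing care is the identification in the second step: verifying that $-\Delta PZ^j_\la$ is exactly a multiple of $Z^j_\la|x|^{2N}e^{W_\la}$. This is the linearized equation satisfied by $Z^j_\la$, obtained by differentiating $-\Delta W_{\la,\xi}=|x|^{2N}e^{W_{\la,\xi}}$ in $\xi$. The remaining steps are routine.
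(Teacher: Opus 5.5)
Your proposal is correct and follows essentially the same route as the paper: compactness of $\phi\mapsto\Pi^\perp\big(i^*_p(\la V|x|^{2N}e^{PW_\la}\phi)\big)$ on $K^\perp$, injectivity of $L$ via Proposition \ref{inv}, the Fredholm alternative for existence and uniqueness, and the operator-norm bound from the same a priori estimate. You also make explicit a step the paper leaves implicit, namely that the $K$-component of $i^*_p(\la V|x|^{2N}e^{PW_\la}\phi)-\phi-h$ produces exactly the term $\sum_j c_j Z^j_\la|x|^{2N}e^{W_\la}$ in \eqref{lla2}. This identification in fact holds with no normalizing constant, since $-\Delta Z^j_\la=|x|^{2N}e^{W_\la}Z^j_\la$ exactly.
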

\begin{proof}  Observe that the operator $\phi\mapsto \Pi^\perp\big( {i^*_{p}}\(       \la V(x)|x|^{2N}e^{PW_\la}     \phi \)\big)$ is a compact operator in $K^\perp$.
   Let us consider the case $h=0$, and take $\phi\in K^\perp$ with $L(\phi)=0$. In other words,  $\phi$ solves
the system \eqref{lla2} with $h=0$ for some $c_1,c_2\in\R$. Proposition \ref{inv} implies $\phi\equiv 0$. Then, Fredholm's alternative implies the existence and uniqueness result.

Once we have existence, the norm estimate follows directly from Proposition \ref{inv}.
\end{proof}

{\Large{\section{Estimate of the error term}}}

Assume that, along with \eqref{eqq1},  there exists $i_0\geq1$ such that  the potential $V$ is of class $C^{2i_0+1}(U)$ in a neighborhood $U$ of $0$ and  \eqref{mangia} holds.  Then, we perform the following expansion:
$$V(x)=1+ {\rm q}_{i_0+1}(x)+  {\rm q}_{i_0+2}(x)+\ldots +{\rm q}_{2i_0}(x)+O(|x|^{2i_0+1}),
$$ where $   {\rm q}_i(x)$ denotes the Taylor polynomial homogeneous of degree $i$.

In particular, let us write explicitly $  {\rm q}_i(x)$ whose coefficients will play a crucial role: if we write each $  {\rm q}_i(x)$ in polar coordinates $(r,\theta)$, so that on the unit circle $x=e^{{\rm i}\theta}$ we have that $${\rm q}_{i}(e^{{\rm i}\theta}) = \alpha_{i}+\Re\big(\overline\beta_ie^{{\rm i}i \theta}\big) +\sum_{\ell=1}^{i-1}\Re\big(\overline\gamma_{i\ell}e^{{\rm i}\ell \theta}\big)
,\qquad \theta\in \R,\;i\geq 2,$$ 
where $\alpha_i=\frac{1}{2\pi}\int_{\partial B_1}   {\rm q}_i(x) dx\in\R$ and $\beta_i,\,\gamma_{i\ell}\in\C$. 
Moreover, since $q_i$ is an even (resp. odd) homogeneous polynomial when $i$ is even (resp. odd), only even (resp. odd) frequencies appear. So, $  {\rm q}_i$ can be rewritten as 
\beq\label{homo1}{\rm q}_{i}(x)=\alpha_{i}|x|^{i}+\Re\big(\overline\beta_i x^i\big)
+|x|^{i}\sum_{\ell=1\atop i+\ell \hbox{ {\tiny{even}}}}^{i-1}\frac{\Re(\overline\gamma_{i\ell} x^{\ell})}{|x|^{\ell}}
,\eeq
with $\alpha_i=0$ for $i$ odd. 
We stress that the quantity $\Re\big(\overline \beta_i x^i)$ coincides with the harmonic part of the homogeneous polynomial ${\rm q}_i$.

The next lemma expresses the coefficient $\alpha_{2i}$ in terms of the $i$-Laplacian of $V$ at $0$.

\

\begin{lemma}\label{finallemma} 
The following holds: 
$$\Delta^{i}{\rm q}_{2i}(x)= 4^i(i!)^2 \alpha_{2i},\quad i\geq 1.$$
Consequently,
$$\Delta^iV(0)=\Delta^i{\rm q}_{2i}(0)=4^i(i!)^2 \alpha_{2i}.$$
\end{lemma}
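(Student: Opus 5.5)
We use the decomposition \eqref{homo1} of ${\rm q}_{2i}$ and apply $\Delta^i$ term by term. Since ${\rm q}_{2i}$ is homogeneous of degree $2i$, $\Delta^i{\rm q}_{2i}$ is a homogeneous polynomial of degree $0$, i.e.\ a constant. So it is enough to show that every summand of \eqref{homo1} except $\alpha_{2i}|x|^{2i}$ is annihilated by $\Delta^i$. After that, a single radial computation finishes the proof.

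\textbf{Harmonic part.} The term $\Re(\overline\beta_{2i}x^{2i})$ is harmonic, so $\Delta$ already kills it.

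\textbf{Mixed terms.} Consider $|x|^{2i-\ell}\Re(\overline\gamma_{2i,\ell}x^\ell)$ with $1\le\ell\le 2i-1$ and $\ell$ even. Write it as $|x|^{2j}h_\ell(x)$, where $h_\ell:=\Re(\overline\gamma_{2i,\ell}x^\ell)$ is a homogeneous harmonic polynomial of degree $\ell$ and $j=(2i-\ell)/2$ is an integer with $1\le j\le i-1$. The standard identity for harmonic $h$ homogeneous of degree $\ell$ is
$$\Delta\big(|x|^{2j}h\big)=2j(2j+2\ell)\,|x|^{2j-2}h,$$
valid in $\R^2$. It follows from $\Delta(fg)=f\Delta g+2\nabla f\cdot\nabla g+g\Delta f$ together with $x\cdot\nabla h=\ell h$ (Euler) and $\Delta|x|^{2j}=4j^2|x|^{2j-2}$. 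Applying the identity $j$ times gives a multiple of $h_\ell$. Since $j<i$, at least one more Laplacian remains, and it kills the harmonic $h_\ell$. Hence $\Delta^i$ annihilates these terms as well.

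\textbf{Radial term.} The same identity with $h\equiv1$ and $\ell=0$ gives $\Delta|x|^{2j}=4j^2|x|^{2j-2}$. Iterating from $j=i$ down to $j=1$,
$$\Delta^i|x|^{2i}=\prod_{j=1}^{i}4j^2=4^i(i!)^2.$$
This yields $\Delta^i{\rm q}_{2i}=4^i(i!)^2\alpha_{2i}$.

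\textbf{Consequence for $V$.} Use the expansion $V=1+\sum_{m}{\rm q}_m+O(|x|^{2i_0+1})$, which is a Taylor expansion valid since $V\in C^{2i_0+1}$. For any $i$ with $2i\le 2i_0$, $\Delta^iV(0)$ is determined by the degree-$2i$ Taylor coefficient. Indeed, $\Delta^i$ is a constant-coefficient operator of order $2i$, so evaluated at $0$ it annihilates homogeneous polynomials of degree different from $2i$ (lower-degree ones vanish identically, higher-degree ones vanish at $0$). It also sees only derivatives of order exactly $2i$ of the remainder, and these come from ${\rm q}_{2i}$. Therefore $\Delta^iV(0)=\Delta^i{\rm q}_{2i}(0)=4^i(i!)^2\alpha_{2i}$.

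The only real check is the product identity for $\Delta(|x|^{2j}h)$. I do not expect a genuine obstacle: it is a direct computation in polar coordinates, with $\Delta=\partial_r^2+r^{-1}\partial_r+r^{-2}\partial_\theta^2$ applied to $r^{2j+\ell}\cos(\ell\theta+c)$, giving $\big((2j+\ell)^2-\ell^2\big)r^{2j+\ell-2}\cos(\ell\theta+c)$.
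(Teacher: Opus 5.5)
Your proposal is correct and follows essentially the same route as the paper. Both decompose ${\rm q}_{2i}$ via \eqref{homo1} and reduce each mixed term $|x|^{2j}h_\ell$ to a multiple of the harmonic $h_\ell$ using $\Delta(|x|^{2j}h)=4j(j+\ell)|x|^{2j-2}h$, which is the paper's factor $4(i-h)(i+h)$ with $\ell=2h$; the remaining Laplacian then kills $h_\ell$, and $\Delta^i|x|^{2i}=4^i(i!)^2$ finishes the computation. Your Taylor-coefficient justification of $\Delta^iV(0)=\Delta^i{\rm q}_{2i}(0)$ is simply a more explicit version of the paper's one-line ``consequently.''
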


\begin{proof}  
Let us fix  $h=1,\ldots, i-1$; then 
$$\begin{aligned}&\Delta(|x|^{2(i-h)}\Re(x^{2h}))=\Delta(|x|^{2(i-h)})\Re(x^{2h})+2\nabla (|x|^{2(i-h)})\cdot\nabla (\Re(x^{2h}))\\ &=
4(i-h)^2|x|^{2(i-h-1)}\Re(x^{2h})+4(i-h)|x|^{2(i-h-1)}2h \big(x_1\Re(x^{2h-1})-x_2\Im(x^{2h-1})\big)\\ &=4(i-h)(i+h)|x|^{2(i-h-1)}\Re(x^{2h})
\end{aligned}$$ and, by iterating we arrive at 
$$\begin{aligned}&\Delta^{i-h}(|x|^{2(i-h)}\Re(x^{2h}))=c_{ih}\Re(x^{2h})\end{aligned}$$ for some constants $c_{ih}\in\R$, 
by which, since $\Re(x^{2h})$ is harmonic,  $$\Delta^i(|x|^{2(i-h)}\Re(x^{2h}))=c_{ih}\Delta^h(\Re(x^{2h}))=0.$$
Similarly
$$\Delta^i(|x|^{2(i-h)}\Im(x^{2h}))=c_{ih}\Delta^h(\Im(x^{2h}))=0.$$
So, the definition of ${\rm q}_{2i}$ in \eqref{homo1} gives

$$\Delta^i({\rm q}_{2i}(x))=\alpha_{2i}\Delta^i(|x|^{2i})=4^i(i!)^2 \alpha_{2i}.$$
\end{proof}

The main goal of this section is to provide an estimate of the error up to which the  approximate solution $PW_\la$ solves problem \eqref{eq0}.

For $N\geq 1$ define 
\beq\label{errelambda}{R}_\la:=-\Delta PW_\la-\la V(x) |x|^{2N}e^{PW_\la}=|x|^{2N}e^{W_\la}-\la V(x) |x|^{2N}e^{PW_\la}.
\eeq

We begin by establishing the following intermediate error estimate.

\begin{lemma}\label{semierror} Let $N\geq 1$ and assume that  $V$ satisfies \eqref{eqq1}. Then, the following holds 
$$\begin{aligned}R_\la&=-2\hat\la +4\Re(\bar\xi x^{N+1})-\big(V(x)-1\big)|x|^{2N}e^{W_\la}\\ &\;\;\;\; 
+\big(V(x)-1\big)\big(|\la|+|\xi| |x|^{N+1}\big)O\big(|x|^{2N}e^{W_\la}\big)+\big(|\xi|^2|x|^{2(N+1)}+\la|\xi|+\la^2\big)O\big(|x|^{2N}e^{W_\la}\big)\end{aligned}$$
uniformly for $\xi$ in a small neighborhood of $0$. 
As a corollary 
\beq\label{litilitio}R_\la=\big(\la+|\xi||x|^{N+1}+|V(x)-1|\big)O\big(|x|^{2N}e^{W_\la}\big)\eeq 
uniformly for $\xi$ in a small neighborhood of $0$. 

\end{lemma}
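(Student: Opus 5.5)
The plan is to start from the identity $R_\la=|x|^{2N}e^{W_\la}-\la V(x)|x|^{2N}e^{PW_\la}$ and use the expansion \eqref{pro-exp1} for $PW_\la$. Subtracting $\log\la$ and $W_\la$ gives
$$\la e^{PW_\la}=e^{W_\la}\exp\Big(8\pi{\cal H}(x^{N+1},\xi)+2\hat\la+O(\la|\xi|)+O(\la^2)\Big).$$
So the whole proof reduces to expanding the exponent to the needed order.

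The first step is the regular part. By \eqref{regG},
$$8\pi{\cal H}(x^{N+1},\xi)=2\log\big(1+|x|^{2(N+1)}|\xi|^2-2\Re(\overline{x^{N+1}}\xi)\big).$$
Expanding the logarithm for small $\xi$ and $|x|\le1$ gives
$$8\pi{\cal H}(x^{N+1},\xi)=-4\Re(\bar\xi x^{N+1})+O(|\xi|^2|x|^{2(N+1)}).$$
We use that $\Re(\overline{x^{N+1}}\xi)=\Re(\bar\xi x^{N+1})$ and that the quadratic terms are bounded by $|\xi|^2|x|^{2N+2}$. Then we exponentiate. Write $E:=-4\Re(\bar\xi x^{N+1})+2\hat\la+O(|\xi|^2|x|^{2N+2}+\la|\xi|+\la^2)$. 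Since $E$ is small,
$$e^{E}=1+E+O(E^2),$$
and $O(E^2)=O(|\xi|^2|x|^{2N+2}+\la|\xi|+\la^2)$. Hence
$$\la e^{PW_\la}=e^{W_\la}\Big(1-2\hat\la+4\Re(\bar\xi x^{N+1})\Big)^{-}\!\!\!\!$$
More precisely, the plan is to record $\la e^{PW_\la}=e^{W_\la}(1+E)+O(|\xi|^2|x|^{2N+2}+\la|\xi|+\la^2)e^{W_\la}$. The sign of the $\hat\la$ and $\xi$ terms in the final formula will be tracked from $R_\la=|x|^{2N}e^{W_\la}(1-V(1+E+\dots))$.

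The second step is to split $V=1+(V-1)$. This gives
$$R_\la=-|x|^{2N}e^{W_\la}\big(E+O(\cdot)\big)-(V-1)|x|^{2N}e^{W_\la}(1+E+O(\cdot)).$$
The first term yields the explicit pieces $-2\hat\la$ and $4\Re(\bar\xi x^{N+1})$, understood as multiplied by $|x|^{2N}e^{W_\la}$ as in the statement's shorthand. It also yields the remainder $(|\xi|^2|x|^{2(N+1)}+\la|\xi|+\la^2)O(|x|^{2N}e^{W_\la})$. In the second term we bound $|E|\le C(\la+|\xi||x|^{N+1})$, up to the already smaller terms. This produces $-(V-1)|x|^{2N}e^{W_\la}$ plus $(V-1)(\la+|\xi||x|^{N+1})O(|x|^{2N}e^{W_\la})$. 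All estimates are uniform for $\xi$ small and $x\in\overline B_1$, because \eqref{pro-exp1} is uniform.

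The corollary \eqref{litilitio} follows by collecting terms. Every term is $O(|x|^{2N}e^{W_\la})$ times one of $\la$, $|\xi||x|^{N+1}$, or $|V-1|$. For this we use $|\xi|^2|x|^{2N+2}\le|\xi||x|^{N+1}$, $\la|\xi|+\la^2\le C\la$, and the boundedness of $V$.

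The only delicate point is the bookkeeping of the expansion of $8\pi{\cal H}$. The linear term in $\xi$ must come out exactly as $-4\Re(\bar\xi x^{N+1})$, with no loss, while the remainder must carry the factor $|x|^{2(N+1)}$ rather than merely $|\xi|^2$. This follows from the fact that the argument of the logarithm is $|1-\overline{x^{N+1}}\xi|^2$.
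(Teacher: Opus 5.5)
Your proposal is correct and follows essentially the same route as the paper: you insert \eqref{pro-exp1} and \eqref{regG} into $\la|x|^{2N}e^{PW_\la}$, expand it as $|x|^{2N}e^{W_\la}\big(1+2\hat\la-4\Re(\bar\xi x^{N+1})\big)+\big(|\xi|^2|x|^{2(N+1)}+\la|\xi|+\la^2\big)O\big(|x|^{2N}e^{W_\la}\big)$, and then split $V=1+(V-1)$ in the definition of $R_\la$; the only difference is that the paper writes $e^{8\pi\mathcal{H}(x^{N+1},\xi)}=\big(1+|x|^{2(N+1)}|\xi|^2-2\Re(\bar\xi x^{N+1})\big)^2$ directly instead of expanding the logarithm, which amounts to the same computation. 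You are also right that $-2\hat\la+4\Re(\bar\xi x^{N+1})$ in the statement must be read as multiplied by $|x|^{2N}e^{W_\la}$, exactly as the paper uses it in \eqref{depi}; just delete the stray line with the wrong signs $1-2\hat\la+4\Re(\bar\xi x^{N+1})$, since your next formula $\la e^{PW_\la}=e^{W_\la}(1+E)+\dots$ already supersedes it.
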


\begin{proof} By \eqref{regG} and  \eqref{pro-exp1}  
we derive $$\begin{aligned}&\la |x|^{2N} e^{PW_\la} \\&
=|x|^{2N} e^{W_\la +8\pi {\cal H}(x^{N+1},\xi)+2\hat\la+O(\la|\xi| )+O(\la^2)} 
\\ &=|x|^{2N}e^{W_\la}(1+|x|^{2(N+1)}|\xi|^2-2\Re( \overline \xi x^{N+1}))^2+|x|^{2N}e^{W_\la}(2\hat\la+O(\la|\xi|)+O(\la^2))
\\ &=|x|^{2N}e^{W_\la}\big(1+ 2\hat\la-4\Re(\bar\xi x^{N+1})\big)+O\big(|x|^{2N}e^{W_\la}\big)\big(|\xi|^2|x|^{2(N+1)}+\la|\xi|+\la^2\big)
\end{aligned}$$
uniformly for $\xi$ in a small neighborhood of $0$. The definition of the error $R_\la$ in \eqref{errelambda} allows us to conclude.

\end{proof}

Let us introduce the following integral quantities which will play a key role for the error estimates:
\beq\label{gammai}\Gamma_{2i}:= 8i\intr|y|^{\frac{2i}{N+1}}\frac{1}{(1+|y|^{2})^3}dy ,\qquad  0\leq\frac{ i}{N+1}<2 .\eeq

In the following, writing $$Z_\la=(Z_\la^1, Z_\la^2),\qquad PZ_\la=(PZ_\la^1,PZ_\la^2)$$ we interpret the integrals componentwise as vectors in $\mathbb{R}^2$. 

\begin{lemma}\label{santa00} Let  $0\leq i<\frac32(N+1).$ Then,  $$\intr |x|^{2(N+i)}e^{W_\la} | Z_\la|dx=O(\la^{\frac{i}{N+1}}),\quad\intr |x|^{2(N+i)}e^{W_\la}  Z_\la dx=  \Gamma_{2i}\hat\la^{\frac{i}{N+1}-\frac12}\xi +O\big(\hat\la^{\frac{i}{N+1}-1}|\xi|^2\big)$$ uniformly for $|\xi|\leq M {\hat\la}^\frac12$.
\end{lemma}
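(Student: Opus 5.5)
The plan is to reduce both integrals to a fixed integral over $\R^2$ through two changes of variables: first $w=x^{N+1}$, then the rescaling $w=\hat\la^{\frac12}y$. After that, the statement becomes a Taylor expansion in the rescaled parameter $\eta:=\hat\la^{-\frac12}\xi$, which ranges over the bounded set $|\eta|\leq M$.

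\emph{Step 1 (reduction).} The map $x\mapsto x^{N+1}$ is an $(N+1)$-to-one covering of $\R^2\setminus\{0\}$ with Jacobian $(N+1)^2|x|^{2N}$. Hence, for any integrable $g$,
$$\intr |x|^{2N}g(x^{N+1})\,dx=\frac1{N+1}\intr g(w)\,dw .$$
Both $e^{W_\la}$ and $Z_\la$ depend on $x$ only through $w=x^{N+1}$, and $|x|^{2i}=|w|^{\frac{2i}{N+1}}$. Set $s:=\frac{i}{N+1}\in[0,\frac32)$ and use $\la=8(N+1)^2\hat\la$. Substituting $w=\hat\la^{\frac12}y$ and $\xi=\hat\la^{\frac12}\eta$, and collecting the powers of $\hat\la$, I expect
$$\intr |x|^{2(N+i)}e^{W_\la}Z_\la\,dx=8(N+1)\,\hat\la^{s}\,F(\eta),\qquad F(\eta):=\intr |y|^{2s}K(y-\eta)\,dy,\qquad K(u):=\frac{u}{(1+|u|^2)^3}.$$
The same computation with $|K|$ in place of $K$ handles the integral of $|Z_\la|$.

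\emph{Step 2 (integrability and regularity of $F$).} The kernel $K$ is smooth and satisfies $|D^\alpha K(u)|\leq C(1+|u|)^{-5-|\alpha|}$. The weight $|y|^{2s}$ is locally integrable. At infinity, $|y|^{2s}(1+|y-\eta|)^{-5}$ is integrable in $\R^2$ exactly when $2s<3$, which is the hypothesis $i<\frac32(N+1)$. This integrability holds uniformly for $|\eta|\leq M$, and it immediately gives the first claimed estimate $O(\hat\la^{s})=O(\la^{\frac{i}{N+1}})$. To expand $F$, I would differentiate in $\eta$ while keeping the derivatives on the smooth kernel $K$ rather than on the non-smooth weight $|y|^{2s}$. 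This shows $F\in C^2(\{|\eta|\leq M\})$ with $\|D^2F\|$ bounded, since $\intr|y|^{2s}(1+|y-\eta|)^{-7}dy<\infty$. Taylor's formula then gives
$$F(\eta)=F(0)+DF(0)\eta+O(|\eta|^2).$$

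\emph{Step 3 (the coefficients).} $F(0)=0$ because $K$ is odd and $|y|^{2s}$ is even. For the linear term, I would shift variables to $y=z+\eta$ and differentiate the weight $|z+\eta|^{2s}$ at $\eta=0$. This derivative is $2s|z|^{2s-2}z$, which is locally integrable, so the step is legitimate. The result is
$$DF(0)\eta=2s\intr |z|^{2s-2}(z\cdot\eta)\frac{z}{(1+|z|^2)^3}\,dz=s\,\eta\intr\frac{|z|^{2s}}{(1+|z|^2)^3}\,dz,$$
where the second equality uses the rotational average $\intr (z\cdot\eta)z\,f(|z|)\,dz=\frac{\eta}{2}\intr|z|^2f(|z|)\,dz$.

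Multiplying by $8(N+1)\hat\la^{s}$ and using $8(N+1)s=8i$ gives
$$\Gamma_{2i}\hat\la^{s}\eta=\Gamma_{2i}\hat\la^{\frac{i}{N+1}-\frac12}\xi .$$
The remainder term becomes $\hat\la^{s}O(|\eta|^2)=O(\hat\la^{\frac{i}{N+1}-1}|\xi|^2)$, as claimed.

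The only delicate point is Step 2. For $s<1$ the weight $|y|^{2s}$ is not $C^2$ near $y=\eta$, so second derivatives must not fall on it. For $s$ close to $\frac32$ the decay at infinity is borderline. Placing all the $\eta$-derivatives on $K$ resolves both issues.
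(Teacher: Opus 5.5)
Your proposal is correct and follows essentially the same route as the paper: the change of variables $w=x^{N+1}$ (Lemma \ref{copy}) and the rescaling $w=\hat\la^{\frac12}y$ reduce both integrals to $8(N+1)\hat\la^{\frac{i}{N+1}}$ times a fixed vector field evaluated at $\hat\la^{-\frac12}\xi$, and the paper (Lemma \ref{fibra}) computes that this field vanishes at $0$ with Jacobian $\frac{\Gamma_{2i}}{8(N+1)}\,\mathrm{Id}$, exactly as you do. Your Step 2 is more careful than the paper: Lemma \ref{fibra} only asserts $C^1$ regularity, which by itself gives an $o(|\eta|)$ remainder, whereas placing the $\eta$-derivatives on the smooth kernel $K$ bounds the second derivatives on $\{|\eta|\leq M\}$ and so yields the stated $O\big(\hat\la^{\frac{i}{N+1}-1}|\xi|^2\big)$ error.
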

\begin{proof}
By applying first Lemma \ref{copy} and next the change of variable $y=\hat\la^\frac12 z$, 
$$\begin{aligned} \intr |x|^{2(N+i)}e^{W_\la}  Z_\la dx&=\frac{\la \hat\la^{\frac12}}{N+1}\intr|y|^{\frac{2i}{N+1}}\frac{y-\xi}{(\hat\la+|y-\xi|^{2})^3}dy\\ &=8(N+1)\hat\la^{\frac{i}{N+1}}\intr|z|^{\frac{2i}{N+1}}\frac{z-\hat\la^{-\frac12}\xi}{(1+|z-\hat\la^{-\frac12}\xi|^{2})^3}dz\\ &=8(N+1)\hat\la^{\frac{i}{N+1}}f(\hat\la^{-\frac12}\xi)\\ &=  \Gamma_{2i}\hat\la^{\frac{i}{N+1}-\frac12}\xi+O\big(\hat\la^{\frac{i}{N+1}-1}|\xi|^2\big)
\end{aligned}$$ uniformly for $|\xi|\leq M  \hat\la^{\frac12}$, 
where $f=(f_1,f_2)$ is the vector field defined in Lemma \ref{fibra}. 
Moreover, arguing similarly,
$$\begin{aligned} \intr |x|^{2(N+i)}e^{W_\la}  |Z_\la|dx&=8(N+1)\hat\la^{\frac{i}{N+1}}\intr|z|^{\frac{2i}{N+1}}\frac{|z-\hat\la^{-\frac12}\xi|}{(1+|z-\hat\la^{-\frac12}\xi|^{2})^3}dz=O(\la^{\frac{i}{N+1}})\end{aligned}$$ uniformly for $|\xi|\leq M  \hat\la^{\frac12}$.
\end{proof}
In the next  proposition we quantify the error bound for the  approximation in different settings.

\begin{prop}\label{auxaux} Let $N\geq 1$. Assume  that, along with \eqref{eqq1},   there exists $i_0\in\N$ with $1\leq i_0\leq N+1$ such that $V\in C^{2i_0+1}(U)$ in a neighborhood of $0$  and \eqref{mangia} holds.  

 Then,  for any fixed $p\geq 1$ the following holds:
\beq\label{cese11}\|R_\la\|_{p}\leq C\la^{\frac{i_0+1}{2(N+1)}}
\la^{-\frac{p-1}{(N+1)p}} \eeq uniformly for $|\xi|\leq M  \hat\la^{\frac12}$.
Moreover: 

\begin{itemize}[leftmargin=*]
\item if $N\geq 2i_0$ one has

 \beq\label{dublend1}\into R_\la PZ_\la dx
 + \alpha_{2i_0}\Gamma_{2i_0}\hat\la^{\frac{i_0}{N+1}-\frac12}\xi
 =O\Big(\la^{\frac{2i_0+1}{2(N+1)}}
 +|\xi|^2\la^{\frac{i_0}{N+1}-1}\Big)
 \eeq
 uniformly for $|\xi|\leq M  \hat\la^{\frac12}$, where the numbers $\Gamma_{2i}$ are defined in \eqref{gammai};

\
 
 \item if $2i_0=N+1$ one has \beq\label{dublend3}\into R_\la PZ_\la dx+ 8(N+1)\alpha_{N+1}\hat\la^\frac12{\cal F}(\hat\la^{-\frac12} \xi) +2(N+1)\pi \hat\la^\frac12\beta_{N+1}=O\big(\la^{\frac12+\frac{1}{2(N+1)}}
 \big)
 \eeq
uniformly for $|\xi|\leq M  \hat\la^{\frac12}$, where ${\cal F}=({\cal F}_1,{\cal F}_2)$ is the vector field defined in Lemma \ref{fibrafibrafibra};

\

 \item if $N+1<2i_0<2(N+1)$, the expansion \eqref{dublend1} has  additional terms
  \beq\label{dublend2}\begin{aligned}&\into R_\la PZ_\la dx+  \alpha_{2i_0}\Gamma_{2i_0}\hat\la^{\frac{i_0}{N+1}-\frac12}\xi +2\pi(N+1)\hat\la^\frac12
  \beta_{N+1}\\ &=O\big(\la^{\frac{2i_0+1}{2(N+1)}}+|\xi|^2\la^{\frac{i_0}{N+1}-1}\big)+\sum_{i=N+2}^{2i_0} \gamma_{i,N+1}O\big(\la^{\frac{i}{2(N+1)}}\big)\end{aligned}
 \eeq
uniformly for $|\xi|\leq M \hat\la^\frac12$;

\

\item if $i_0=N+1$, one has 
\beq\label{dublend4}\begin{aligned}&\into R_\la PZ_\la dx+4\pi (N+1)\hat\la^\frac12\big( \alpha_{2N+2}\xi+\beta_{2N+2}\overline \xi-2\xi \big)
\\ &=O\big(\la^{1+\frac{1}{2(N+1)}}\big)+\sum_{i=N+2}^{2N+2} \gamma_{i,N+1}O\big(\la^{\frac{i}{2(N+1)}}\big)\end{aligned}
 \eeq
uniformly for $|\xi|\leq M \hat\la^\frac12$.

  \

\end{itemize}
 \end{prop}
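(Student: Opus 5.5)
The plan is to start from Lemma \ref{semierror}, which writes $R_\la$ as an explicit part plus controlled remainders, and then insert the Taylor expansion
$$V-1={\rm q}_{i_0+1}+\dots+{\rm q}_{2i_0}+O(|x|^{2i_0+1}).$$
\textbf{The $L^p$ bound.} By \eqref{litilitio}, $|R_\la|\le C(\la+|\xi||x|^{N+1}+|x|^{i_0+1})|x|^{2N}e^{W_\la}$, since \eqref{mangia} gives $|V-1|=O(|x|^{i_0+1})$. Apply Lemma \ref{aux00} with $m=0$, $m=N+1$ and $m=i_0+1$ respectively; note $i_0+1\le N+2<2N+2$ for $N\ge1$. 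Using $|\xi|\le M\hat\la^{1/2}$, each term is bounded by $\la^{\frac{i_0+1}{2(N+1)}}\la^{-\frac{p-1}{(N+1)p}}$, because $1\ge \frac{i_0+1}{2(N+1)}$ and $\frac12+\frac12\ge\frac{i_0+1}{2(N+1)}$. This gives \eqref{cese11}.

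\textbf{Projections onto $PZ_\la$.} Replace $PZ_\la$ by $Z_\la-\hat\la^{1/2}x^{N+1}+O(\la^{1/2}|\xi|+\la^{3/2})$ using Lemma \ref{pziraff}, with $x^{N+1}$ read as the vector $(\Re,\Im)$. Against the $L^1$ bound of $R_\la$, the constant error contributes $O(\la^{\frac{i_0+1}{2(N+1)}+\frac12})$, which is negligible in every case. The correction $-\hat\la^{1/2}\int R_\la x^{N+1}$ needs care: since $\int R_\la x^{N+1}$ is $O(\la^{\frac{i_0+1}{2(N+1)}})$ times something, it is lower order except when combined with the constants $-2\hat\la$ in $R_\la$. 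It is also where the $-2\xi$ term in \eqref{dublend4} can arise. I will compute it explicitly by the same scaling.

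The main part is then a term-by-term evaluation of $\int R_\la Z_\la$:
\begin{itemize}
\item The $-2\hat\la$ term integrates against $Z_\la$ to $O(\la\cdot\la^{0})\cdot|\xi|/\hat\la^{1/2}$-type quantities via Lemma \ref{santa00} with $i=0$.
\item The term $4\Re(\bar\xi x^{N+1})|x|^{2N}e^{W_\la}Z_\la$ is handled through Lemma \ref{copy}, i.e.\ the substitution $y=x^{N+1}$ with Jacobian factor $1/(N+1)$ after angular averaging.
\item The key terms are $-\int{\rm q}_i(x)|x|^{2N}e^{W_\la}Z_\la$ for $i_0+1\le i\le 2i_0$.
\end{itemize}
Write ${\rm q}_i$ via \eqref{homo1} and pass to $y=x^{N+1}$, under which $Z_\la$ and $e^{W_\la}$ depend only on $y$. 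By summing over the $N+1$ preimages, only the angular Fourier frequencies of ${\rm q}_i$ that are multiples of $N+1$ survive, and the vector $Z_\la$ itself carries frequency $\pm1$ in $y$. Hence:
\begin{itemize}
\item the radial part $\alpha_{2i}|x|^{2i}=\alpha_{2i}|y|^{2i/(N+1)}$ gives, by Lemma \ref{santa00}, $\alpha_{2i}\Gamma_{2i}\hat\la^{\frac{i}{N+1}-\frac12}\xi$ plus $O(|\xi|^2\hat\la^{\frac{i}{N+1}-1})$;
\item frequency-$(N+1)$ components in $x$ (frequency $1$ in $y$) give a nonzero contribution even at $\xi=0$, namely $\pi(N+1)\cdot2\hat\la^{1/2}\beta_{N+1}$ from the harmonic part of ${\rm q}_{N+1}$, and $\gamma_{i,N+1}O(\la^{\frac{i}{2(N+1)}})$ from ${\rm q}_i$ with $i>N+1$;
\item frequency $2(N+1)$ only appears when $i=2N+2$, producing the $\beta_{2N+2}\bar\xi$ term;
\item all other frequencies vanish at $\xi=0$ and are $O(|\xi|)$ times a smaller power.
\end{itemize}
Summing over $i\le 2i_0$ and using the vanishing of $\alpha_{2i}$ for $i<i_0$ yields the leading $\alpha_{2i_0}$ term. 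Ordering powers of $\la$ gives the case split:
\begin{itemize}
\item for $N\ge2i_0$ no frequency $N+1$ occurs, since $i\le 2i_0<N+1$, which gives \eqref{dublend1};
\item for $2i_0=N+1$ the radial term cannot be linearized in $\xi$, because the exponent $\frac{i_0}{N+1}-\frac12=0$ makes all orders in $\hat\la^{-1/2}\xi$ comparable, so one keeps the full ${\cal F}(\hat\la^{-1/2}\xi)$ from Lemma \ref{fibrafibrafibra}, which gives \eqref{dublend3};
\item the remaining two cases follow by the same bookkeeping, giving \eqref{dublend2} and \eqref{dublend4}.
\end{itemize}
The remainder $O(|x|^{2i_0+1})$ contributes $O(\la^{\frac{2i_0+1}{2(N+1)}})$ by Lemma \ref{santa00}. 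The quadratic error terms from Lemma \ref{semierror} are bounded in the same way.

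The main obstacle is the exact constant bookkeeping in the critical cases $2i_0=N+1$ and $i_0=N+1$. There, contributions from the $-2\hat\la$ and $4\Re(\bar\xi x^{N+1})$ terms, from the $PZ_\la$ correction of Lemma \ref{pziraff}, and from ${\rm q}_{N+1}$ and ${\rm q}_{2N+2}$ are all of the same order $\hat\la^{1/2}$. They must be computed exactly, which I would do by reducing each to a one-variable radial integral in $y$ after the substitution of Lemma \ref{copy}.
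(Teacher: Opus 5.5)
\textbf{The gap is in the case $i_0=N+1$.} Your skeleton is the paper's: Lemma \ref{semierror} plus the Taylor expansion of $V$, pairing with $Z_\la$ via Lemmas \ref{copy}, \ref{copy0}, \ref{santa00} and Remark \ref{enne=4}, then Lemma \ref{pziraff} to pass to $PZ_\la$. The $L^p$ bound and the cases \eqref{dublend1}, \eqref{dublend3}, \eqref{dublend2} go through essentially as you say, because there every contribution of size $O(\la)$ is absorbed in the error. When $i_0=N+1$, however, the leading term $\hat\la^{1/2}\xi$ is itself of order $\la$, and the allowed error (apart from the $\gamma_{i,N+1}$ terms) is $O(\la^{1+\frac{1}{2(N+1)}})$. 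In that case you attribute the terms to the wrong sources.

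\textbf{Where $-2\xi$ comes from.} The Lemma \ref{pziraff} correction is negligible in every case. Indeed $|R_\la|\le C(\la+|x|^2)|x|^{2N}e^{W_\la}$, using $i_0\ge1$ and $|\xi|\le M\hat\la^{1/2}$. Also $|PZ_\la-Z_\la|\le C(\la^{1/2}|x|^{N+1}+\la)$. Together these give $\into R_\la(PZ_\la-Z_\la)\,dx=O(\la^{1+\frac{1}{2(N+1)}})$, and its interaction with the $-2\hat\la$ part is even $O(\la^2)$. So the $-2\xi$ in \eqref{dublend4} does not come from there. It comes from the term $4\Re(\bar\xi x^{N+1})|x|^{2N}e^{W_\la}$ of $R_\la$. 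After Lemma \ref{copy} and the shift $y=\xi+\hat\la^{1/2}z$, the constant $|\xi|^2$ in $\Re(\bar\xi y)$ drops out by oddness. The pairing with $Z_\la$ is then exactly $8\pi(N+1)\hat\la^{1/2}\xi$.

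\textbf{Two of your estimates are too weak here.} In this case there is no $O(\la)$ slack.
First, the $-2\hat\la$ term must pair with $Z_\la$ to exactly $0$, by the same shift and oddness. An $O(\la)$ bound obtained from Lemma \ref{santa00} with $i=0$ is not enough.
Second, Lemma \ref{santa00} applied to $\alpha_{2N+2}|x|^{2N+2}$ leaves an error $O(|\xi|^2)=O(\la)$, so ``the same bookkeeping'' does not yield \eqref{dublend4}. You need an exact identity instead. Expand $|y|^2=|\xi|^2+2\hat\la^{1/2}\Re(\bar\xi z)+\hat\la|z|^2$; only the middle term survives. This gives $\intr|x|^{4N+2}e^{W_\la}Z_\la\,dx=4\pi(N+1)\hat\la^{1/2}\xi$ with no remainder. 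Likewise $\Re(\bar\beta_{2N+2}y^2)$ gives exactly $4\pi(N+1)\hat\la^{1/2}\beta_{2N+2}\overline\xi$.

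\textbf{Your closing paragraph also misplaces the difficulty.} For $2i_0=N+1$ the leading order is $\hat\la^{1/2}$. The $-2\hat\la$, $\Re(\bar\xi x^{N+1})$ and $PZ_\la$-correction contributions are all $O(\la)=o(\la^{\frac12+\frac{1}{2(N+1)}})$ and need no exact evaluation. Moreover ${\rm q}_{2N+2}$ does not enter at all. For $i_0=N+1$, on the other hand, ${\rm q}_{N+1}=0$. With these exact identities replacing your estimates, the plan becomes the paper's proof.
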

\begin{proof} Taking into account that  $V(x)-1=\sum_{i=i_0+1}^{2i_0}   {\rm q}_i(x) +O(|x|^{2i_0+1})=
 O(|x|^{i_0+1})$  according to \eqref{mangia}, by \eqref{litilitio} we get
 
 \beq\label{imbe}R_\la=(\la+|\xi||x|^{N+1}+|x|^{i_0+1}\big)O(|x|^{2N}e^{W_\la})\eeq
 and  \eqref{cese11} can be deduced directly by Lemma \ref{aux00}. 
Next,  using Lemma \ref{semierror}  we also get
\beq\label{depi}\begin{aligned}&R_\la +\big(2\hat\la- 4\Re(\bar\xi x^{N+1})\big) |x|^{2N}e^{W_\la}+ \sum_{i=i_0+1}^{2i_0}   {\rm q}_i(x)  |x|^{2N}e^{W_\la}
\\ &=\big(|\xi|^2|x|^{2(N+1)}+
\la|\xi|+\la^2+|x|^{2i_0+1}
+\la|x|^{i_0+1}+|\xi||x|^{N+i_0+2}
\big)O\big(|x|^{2N}e^{W_\la}\big) \end{aligned}\eeq uniformly for $|\xi|\leq M  \hat\la^{\frac12}$.
By Lemma \ref{copy0} we deduce
$$\intr |x|^{2N}e^{W_\la}Z_\la dx=0$$ and, in view of Remark \ref{enne=4} we have 
$$\intr \Re(\bar\xi x^{N+1}) |x|^{2N}e^{W_\la}Z_\la dx=2\pi(N+1)\hat\la^\frac12\xi.$$
 Therefore,  by  testing the above identity \eqref{depi} against $ Z_\la$ and using  Lemma \ref{santa00},
\beq\label{igo}\begin{aligned} &\into R_\la Z_\la dx-8\pi(N+1)\hat\la^\frac12\xi
+ \sum_{i=i_0+1}^{2i_0}\intr|x|^{2N}   {\rm q}_i(x)e^{W_\la }  Z_\la dx=O\big(\la^{\frac{2i_0+1}{2(N+1)}}\big)\end{aligned}\eeq uniformly for $|\xi|\leq M  \hat\la^{\frac12}$.
According to Lemma \ref{pziraff}  and \eqref{imbe} we derive
$$R_\la(Z_\la-PZ_\la)=(\la+|x|^2)\big(\la^\frac12|x|^{N+1}+\la^\frac12|\xi|+\la^\frac32\big)
O\big( |x|^{2N}e^{W_\la}\big)$$ uniformly for $|\xi|\leq M  \hat\la^{\frac12}$, which implies, using Lemma \ref{aux00}, 
\beq\label{uffa}\into R_\la PZ_\la dx=
 \into R_\la Z_\la dx+ O\big(\la^{1+\frac{1}{2(N+1)}}
\big)\eeq uniformly for $|\xi|\leq M  \hat\la^{\frac12}$.

\

Assume for the moment that $$N\geq2 i_0.$$
Now we use  the definition of $ {\rm q}_i$ in \eqref{homo1}: taking into account that  $|x|^{2N}e^{W_\la } Z_\la$ is $\frac{2\pi}{N+1}$ symmetric, then, if $N\geq 2i_0$ we apply Lemma \ref{copy0} with $(\ell, m)= (i, N+1)$    for $i=i_0+1,\ldots, 2i_0$ and obtain immediately  
\beq\label{santa3}\sum_{i=i_0+1}^{2i_0}\intr|x|^{2N}  {\rm q}_i(x)e^{W_\la }  Z_\la dx=\!\!\sum_{i=i_0+1\atop i\hbox{ {\tiny{even}}} }^{2i_0}\intr \alpha_i|x|^{2N+i}  e^{W_\la }  Z_\la dx
= \alpha_{2i_0}\!\intr|x|^{2(N+i_0)} e^{W_\la }  Z_\la dx,\eeq
where last identity follows by $\alpha_i=0$ for $i<2i_0$ according to  \eqref{mangia} and Lemma \ref{finallemma}. 
By inserting \eqref{uffa}--\eqref{santa3}  into \eqref{igo}  we arrive at 
$$\begin{aligned} \into R_\la P Z_\la  dx
+\alpha_{2i_0}\intr|x|^{2(N+i_0)} e^{W_\la }  Z_\la dx 
= 
O\big(\la^{\frac{2i_0+1}{2(N+1)}}\big)
\end{aligned}$$
 uniformly for $|\xi|\leq M\hat\la^\frac12$. 
 By using Lemma \ref{santa00} we deduce \eqref{dublend1}  
 and the thesis  is then established for $N\geq 2i_0$. 

\

 Similarly, if $N+1< 2i_0<2(N+1)$,  using again the definition of $  {\rm q}_i$ in \eqref{homo1} and by applying Lemma \ref{copy0} with $\ell< N+1$ and $N+1<\ell<2(N+1)$,
 \beq\label{santa33}\begin{aligned}&\sum_{i=i_0+1}^{2i_0}\intr|x|^{2N} {\rm q}_i(x)e^{W_\la }  Z_\la dx \\ &= \alpha_{2i_0}\intr|x|^{2(N+i_0)} e^{W_\la }   Z_\la dx+
\sum_{i=N+2}^{2i_0}\intr \Re(\overline\gamma_{i,N+1} x^{N+1})|x|^{N+i-1} e^{W_\la }   Z_\la dx\\ &\;\;\;\;
+ \intr \Re\big(\overline \beta_{N+1}x^{N+1}\big)|x|^{2N} e^{W_\la }   Z_\la dx
 \\ &
= \alpha_{2i_0}\intr|x|^{2(N+i_0)} e^{W_\la } Z_\la dx+2\pi (N+1) \hat\la^{\frac12}\beta_{N+1}+\sum_{i=N+2}^{2i_0} \gamma_{i,N+1}O\big(\la^{\frac{i}{2(N+1)}}\big)
  \end{aligned}\eeq according to Remark  \ref{enne=4} and Lemma \ref{santa00}. 
Combining  \eqref{uffa} and \eqref{santa33}  with \eqref{igo} we arrive at
\beq\label{odi}\begin{aligned}&\into R_\la PZ_\la dx+ \alpha_{2i_0}\intr|x|^{2(N+i_0)} e^{W_\la } Z_\la dx +2\pi (N+1)\hat\la^\frac12 \beta_{N+1}\\ &=O\big(\la^{\frac{2i_0+1}{2(N+1)}}\big) +\sum_{i=N+2}^{2i_0} \gamma_{i,N+1}O\big(\la^{\frac{i}{2(N+1)}}\big)
.\end{aligned}\eeq
If $N+1< 2i_0< 2(N+1)$, then \eqref{dublend2} can be deduced  directly by \eqref{odi} applying  Lemma \ref{santa00}. 

\

On the other hand, if $2i_0=N+1$, by applying Lemma \ref{copy0} with $\ell< N+1$,
 \beq\label{santa33tris}\begin{aligned}&\sum_{i=i_0+1}^{2i_0}\intr|x|^{2N} {\rm q}_i(x)e^{W_\la }  Z_\la dx  = \sum_{i=\frac{N+3}{2}}^{N+1}\intr|x|^{2N} {\rm q}_i(x)e^{W_\la }  Z_\la dx\\ &=\alpha_{N+1}\intr|x|^{3N+1} e^{W_\la }   Z_\la dx+ \intr \Re\big(\overline \beta_{N+1}x^{N+1}\big)|x|^{2N} e^{W_\la }   Z_\la dx
 \\ &
= \alpha_{N+1}\intr|x|^{3N+1} e^{W_\la } Z_\la dx+2\pi (N+1) \hat\la^{\frac12}\beta_{N+1}
  \end{aligned}\eeq according to Remark  \ref{enne=4}; 
by applying first Lemma \ref{copy} and next the change of variable $y=\hat\la^\frac12 z$, 
$$\begin{aligned} 
 \intr |x|^{3N+1}e^{W_\la}  Z_\la dx &=\frac{\la \hat\la^{\frac12}}{N+1}\intr|y|\frac{y-\xi}{(\hat\la+|y-\xi|^{2})^3}dy \\&=8(N+1) \hat\la^{\frac12}\intr|z|\frac{z-\hat\la^{-\frac12}\xi}{(1+|z-\hat\la^{-\frac12}\xi|^{2})^3}dz\\ &=8(N+1)\hat\la^\frac12{\cal F}(\hat\la^{-\frac12} \xi)
\end{aligned}$$
where ${\cal F}=({\cal F}_1,{\cal F}_2)$ is the vector field defined in Lemma \ref{fibrafibrafibra}. By substituting in \eqref{santa33tris} and next into \eqref{igo} (using also \eqref{uffa}), we obtain \eqref{dublend3} when $2i_0=N+1$.

\vspace{2mm}

Finally, assume  $i_0=N+1$; then 
by applying Lemma \ref{copy0}  for  $N+2\leq \ell<2(N+1)$ 
 \beq\label{santa33tre}\begin{aligned}&\sum_{i=i_0+1}^{2i_0}\into|x|^{2N} {\rm q}_i(x)e^{W_\la }  Z_\la dx=\sum_{i=N+2}^{2N+2}\into|x|^{2N+2} {\rm q}_i(x)e^{W_\la }  Z_\la dx
 \\ &= \alpha_{2N+2}\into|x|^{4N+2} e^{W_\la } Z_\la+\intr \Re\big(\overline \beta_{2N+2}x^{2(N+1)}\big)
 |x|^{2N} e^{W_\la }   Z_\la dx \\ &\;\;\;\;+\sum_{i=N+2}^{2N+2}\intr\Re(\overline\gamma_{i,N+1} x^{N+1})
 |x|^{N+i-1} 
 e^{W_\la }   Z_\la dx
\\ &=
 \alpha_{2N+2}\into|x|^{4N+2} e^{W_\la } Z_\la+4\pi(N+1)\hat\la^\frac12 \beta_{2N+2}\overline\xi+\sum_{i=N+2}^{2N+2} \gamma_{i,N+1}O\big(\la^{\frac{i}{2(N+1)}}\big) \end{aligned}\eeq uniformly for $|\xi|\leq M\hat\la^\frac12$, according to Remark  \ref{enne=4} and Lemma \ref{santa00}.  On the other hand$$\begin{aligned}
 \intr |x|^{4N+2}e^{W_\la}  Z_\la dx &=
\frac{\la \hat\la^{\frac12}}{N+1}\intr|y|^2\frac{y-\xi}{(\hat\la+|y-\xi|^{2})^3}dy\\ &=16(N+1)\hat\la^\frac12 \intr\Re(\overline \xi z)\frac{z}{(1+|z|^{2})^3}dz\\ &=4\pi(N+1)\hat\la^\frac12 \xi
\end{aligned}$$
since $\intr \frac{z_1^2}{(1+|z|^2)^3}dz=\frac\pi4$. By substituting in \eqref{santa33tre}, and next into \eqref{igo},  
(using also \eqref{uffa})
we deduce \eqref{dublend4} for $i_0=N+1$. 
 \end{proof}

\begin{rmk}\label{enne=4} 

By applying first Lemma \ref{copy} and next the change of variable $y=\hat\la^{1/2} z$ we can evaluate 
$$\begin{aligned} \intr \Re(x^{N+1})|x|^{2N}e^{W_\la} Z_\la&=
\frac{\la}{N+1} \hat\la^{\frac12}\intr\frac{y_1(y-\xi)}{(\hat\la+|y-\xi|^2)^3} dy
=\frac{\la}{N+1} \hat\la^{\frac12}\intr\frac{y_1y}{(\hat\la+|y|^2)^3} dy
\\ & =8(N+1) \hat\la^{\frac12}\intr\frac{z_1z}{(1+|z|^2)^3} dz=2(N+1)\pi \hat\la^{\frac12}\left(\begin{aligned}&1\\ &0\end{aligned}\right)
 \end{aligned}$$
since $\intr \frac{z_1^2}{(1+|z|^2)^3}dz=\frac\pi4$. 
Moreover,  arguing analogously 
$$
\intr \Im(x^{N+1})|x|^{2N}e^{W_\la} Z_\la dx=2(N+1)\pi \hat\la^{\frac12}\left(\begin{aligned}&0\\ &1\end{aligned}\right) 
  .$$
Similarly
$$\begin{aligned} &\intr \Re\big(x^{2(N+1)}\big)|x|^{2N}e^{W_\la} Z_\la dx=4(N+1)\pi \hat\la^{\frac12}\overline\xi, 
\\ &\intr \Im\big(x^{2(N+1)}\big)|x|^{2N}e^{W_\la} Z_\la dx= 4(N+1)\pi \hat\la^{\frac12}{\rm i} \overline\xi 
.\end{aligned}$$ 
\end{rmk}

\begin{prop}\label{auxauxaux} Let $N\geq 1$. Assume  that, along with \eqref{eqq1},   there exists $i_0\in\N$ with $i_0>  N+1$ such that \eqref{mangia} holds and $V\in C^{2i_0+1}(U)$ in a neighborhood of $0$.
Then,  for any fixed $p\geq 1$ the following holds:
\beq\label{cese111}\|R_\la\|_{p}\leq C\la^{\frac12+\frac{1}{2(N+1)}}
\la^{-\frac{p-1}{(N+1)p}} \eeq 

Moreover: 

\begin{itemize}[leftmargin=*]
\item if $N+1< i_0< 2N+2$ one has 
 \beq\label{dublend60}\into R_\la PZ_\la dx
 + 4\pi(N+1)\hat\la^\frac12 (\beta_{2N+2}\overline\xi-2\xi)
 =O\big(\la^{1+\frac{1}{2(N+1)}}\big)+\sum_{i=i_0+1}^{2N+2}\gamma_{i,N+1} O\big(\la^{\frac{i}{2(N+1)}}\big)
 \eeq
 uniformly for $|\xi|\leq M  \hat\la^{\frac12}$;

\

\item if $i_0\geq 2N+2$ one has 

 \beq\label{dublend6}\into R_\la PZ_\la dx
 - 8(N+1)\pi \hat\la^\frac12\xi=O\big(\la^{1+\frac{1}{2(N+1)}}\big)\eeq
 uniformly for $|\xi|\leq M\hat\la^\frac12$.
 
 \end{itemize}

 \end{prop}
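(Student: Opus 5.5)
The proof will run in parallel with that of Proposition \ref{auxaux}. The difference is that now $V(x)-1=O(|x|^{i_0+1})$ with $i_0+1\geq N+3$. Hence in Lemma \ref{semierror} the potential no longer dominates the error, and the term $-2\hat\la+4\Re(\bar\xi x^{N+1})$ does. First, by \eqref{litilitio},
$$R_\la=\big(\la+|\xi||x|^{N+1}+|x|^{i_0+1}\big)O(|x|^{2N}e^{W_\la}).$$
Since $|\xi|\leq M\hat\la^{1/2}$, Lemma \ref{aux00} with $m=N+1$ (so the factor is $\la^{1/2}$) gives
$$\|\la^{1/2}|x|^{3N+1}e^{W_\la}\|_p\leq C\la^{\frac12+\frac12}\la^{-\frac{p-1}{(N+1)p}}.$$
For the $|x|^{i_0+1}$ term, I bound $|x|^{i_0+1}\leq |x|^{N+2}$ on $B_1$ and apply Lemma \ref{aux00} with $m=N+2<2N+2$, which yields $\la^{\frac{N+2}{2(N+1)}}=\la^{\frac12+\frac1{2(N+1)}}$. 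The $\la$ term is even smaller. Together these give \eqref{cese111}.

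For the projections I write $V-1=\sum_{i=i_0+1}^{2N+2}{\rm q}_i+O(|x|^{2N+3})$ when $i_0<2N+2$ (which uses $V\in C^{2i_0+1}$ with $2i_0+1\geq 2N+3$), and simply $V-1=O(|x|^{2N+3})$ when $i_0\geq 2N+2$. I then repeat the derivation of \eqref{depi} and \eqref{igo}, testing against $Z_\la$. The remainder terms are $|x|^{2N+3}$, $\la|x|^{N+3}$, $|\xi||x|^{N+i_0+2}$, $|\xi|^2|x|^{2N+2}$, $\la|\xi|$ and $\la^2$, each multiplied by $|x|^{2N}e^{W_\la}|Z_\la|$. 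By Lemma \ref{santa00}, or by a direct scaling estimate where the exponent exceeds the range $i<\frac32(N+1)$, all of these are $O(\la^{1+\frac1{2(N+1)}})$. The constant $-2\hat\la$ integrates to zero against $Z_\la$ by Lemma \ref{copy0}. The term $4\Re(\bar\xi x^{N+1})$ contributes $-8\pi(N+1)\hat\la^{1/2}\xi$ by Remark \ref{enne=4}, with the sign fixed as in \eqref{igo}. Passing from $Z_\la$ to $PZ_\la$ costs $O(\la^{1+\frac1{2(N+1)}})$ exactly as in \eqref{uffa}, using Lemma \ref{pziraff}, and \eqref{uffa} still applies because $R_\la$ has the same structure. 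This already proves \eqref{dublend6} for $i_0\geq 2N+2$.

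For $N+1<i_0<2N+2$ I expand each ${\rm q}_i$, $i_0+1\leq i\leq 2N+2$, through \eqref{homo1} and use the $\frac{2\pi}{N+1}$-symmetry of $|x|^{2N}e^{W_\la}Z_\la$ together with Lemma \ref{copy0}. The only angular frequencies that survive are $N+1$ and $2N+2$.
\begin{itemize}
\item Frequency $N+1$: these are the $\gamma_{i,N+1}$ terms, and $\beta_{N+1}$ is absent since $N+1<i_0+1$. Lemma \ref{santa00} bounds them by $\gamma_{i,N+1}O(\la^{\frac{i}{2(N+1)}})$.
\item Frequency $2N+2$: only $\Re(\overline\beta_{2N+2}x^{2N+2})$ with $i=2N+2$. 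By Remark \ref{enne=4} it contributes $4\pi(N+1)\hat\la^{1/2}\beta_{2N+2}\overline\xi$.
\item Radial part $\alpha_i|x|^i$: one has $\alpha_{2N+2}=0$. Indeed $\Delta^{N+1}V(0)=0$ because $N+1<i_0$, and Lemma \ref{finallemma} then forces $\alpha_{2N+2}=0$. The other $\alpha_i$ with $i<2N+2$ vanish by the same argument, or, if $i$ is odd, by parity.
\end{itemize}
Collecting these contributions yields \eqref{dublend60}.

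The main obstacle is bookkeeping the remainder orders against the target $\la^{1+\frac1{2(N+1)}}$, above all for terms like $|x|^{2N+3}$. Here Lemma \ref{santa00} does not apply directly, since $i$ may exceed $\frac32(N+1)$. I will instead scale $x=\hat\la^{\frac1{2(N+1)}}y$ and check that the tail integral converges, or else truncate on $B_1$ and use the bound $|x|^{2N}e^{W_\la}|Z_\la|\lesssim \la^{3/2}|x|^{-3N-3}$ for $|x|\gg\la^{\frac1{2(N+1)}}$.
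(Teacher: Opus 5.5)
Your proposal is correct and follows the paper's proof essentially step by step. The steps match: the $L^p$ bound from \eqref{litilitio} and Lemma \ref{aux00}; testing the expansion of Lemma \ref{semierror} against $Z_\la$; isolating the $-8\pi(N+1)\hat\la^{\frac12}\xi$, $\beta_{2N+2}\overline\xi$ and $\gamma_{i,N+1}$ contributions via Lemma \ref{copy0} and Remark \ref{enne=4}; and removing the radial parts through Lemma \ref{finallemma}. The paper passes from $Z_\la$ to $PZ_\la$ more cheaply, via \eqref{pzi} and $\|R_\la\|_1$, but your use of \eqref{uffa} also works.

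The obstacle you flag is not real. For $|x|^{2N+3}$ the index is $i=N+\frac32<\frac32(N+1)$, so Lemma \ref{santa00} applies directly, exactly as in the paper. Incidentally, the far-field bound in your fallback should read $|x|^{2N}e^{W_\la}|Z_\la|\lesssim\la^{3/2}|x|^{-3N-5}$, not $|x|^{-3N-3}$.
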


\begin{proof}
\eqref{cese111} follows directly by Lemma \eqref{aux00} and \eqref{litilitio} once we have observed that $V(x)-1=O(|x|^{i_0+1})=O(|x|^{N+2})$. 
Moreover, using 
Lemma \ref{semierror} we deduce 

\beq\label{depipi}\begin{aligned}&R_\la+\big(2\hat\la-4\Re(\overline\xi x^{N+1})\big)|x|^{2N}e^{W_\la}+\big(V(x)-1\big)
|x|^{2N}e^{W_\la}\\ &=\big(|\xi|^2|x|^{2(N+1)}+\la|\xi|+\la^2+
\la|x|^{N+2}+|\xi||x|^{2N+3}\big)O\big(|x|^{2N}e^{W_\la}\big)\end{aligned} \eeq
 uniformly for $|\xi|\leq M\hat\la^\frac12$.
By Lemma \ref{copy0} and Remark \ref{enne=4} we have
$$\intr |x|^{2N}e^{W_\la}Z_\la dx=0,$$ 
$$\intr \Re(\bar\xi x^{N+1}) |x|^{2N}e^{W_\la}Z_\la dx=2\pi(N+1)\hat\la^\frac12\xi.$$

Therefore,  by  testing the above identity \eqref{depipi} against $ Z_\la$ and using  Lemma \ref{santa00},
\beq\label{igor}\begin{aligned} &\into R_\la Z_\la dx-8\pi(N+1)\hat\la^\frac12\xi
+ \into \big(V(x)-1\big)
|x|^{2N}e^{W_\la }  Z_\la dx=O(\la^\frac32)
\end{aligned}\eeq uniformly for $|\xi|\leq M  \hat\la^{\frac12}$. Combining \eqref{pzi} with \eqref{cese111} we obtain
\beq\label{lavi}\into R_\la PZ_\la dx= \into R_\la Z_\la dx+O(\la^\frac12)\into |R_\la| dx= \into R_\la Z_\la dx+O\big(\la^{1+\frac{1}{2(N+1)}}\big)\eeq
uniformly for $|\xi|\leq M  \hat\la^{\frac12}$.

\

If $i_0\geq 2N+2$, then $V(x)-1=O(|x|^{2N+3}) $, and \eqref{dublend6} follows by \eqref{igor} and \eqref{lavi} applying Lemma \ref{santa00}.
\

Next assume that  $N+1< i_0\leq 2N+1$: 
using  again Lemma \ref{santa00} together with 
the expansion $V(x)-1=\sum_{i=i_0+1}^{2N+2}{\rm q}_i(x)+O(|x|^{2N+3})$
\beq\label{imbesi}\begin{aligned}\into|x|^{2N} (V(x)-1)e^{W_\la }  Z_\la dx&= \sum_{i=i_0+1}^{2N+2}\intr|x|^{2N} {\rm q}_i(x)e^{W_\la }  Z_\la dx+O\big(\la^{1+\frac{1}{2(N+1)}}\big).\end{aligned}
\eeq
So, recalling the definition of $  {\rm q}_i$ in \eqref{homo1} and by applying Lemma \ref{copy0} with  $N+1<\ell<2(N+1)$,
 \beq\label{santa3333}\begin{aligned}&\sum_{i=i_0+1}^{2N+2}\intr|x|^{2N} {\rm q}_i(x)e^{W_\la }  Z_\la dx\\& =
 \intr \Re\big(\overline \beta_{2N+2}x^{2N+2}\big)|x|^{2N} e^{W_\la }   Z_\la dx+\sum_{i=i_0+1}^{2N+2}\intr\Re(\overline \gamma_{i,N+1}x^{N+1}) |x|^{N+i-1}e^{W_\la}Z_\la dx\\ &=4\pi (N+1) \hat\la^{\frac12}\beta_{2N+2}\overline\xi+\sum_{i=i_0+1}^{2N+2}\gamma_{i,N+1} O\big(\la^{\frac{i}{2(N+1)}}\big)  \end{aligned}\eeq according to Remark  \ref{enne=4}. 
By inserting \eqref{santa3333} in \eqref{imbesi}, and next combining with  \eqref{igor} and \eqref{lavi}, we obtain \eqref{dublend60}. 
\end{proof} 

{\Large{\section{The nonlinear problem: a contraction argument}}}
In order to find a blow-up sequence for problem \eqref{eq0}-\eqref{eqq0}, let us consider the following intermediate problem:

\beq\label{inter}\left\{\begin{aligned}&-\Delta(PW_\la+\phi)-\la V(x) |x|^{2N}e^{PW_\la+\phi}=\sum_{j=1,2}c_j Z_\la^j|x|^{2N} e^{W_\la},\\ &\phi \in H^1_0(B_1),\;\;\;\; \into \nabla \phi\nabla PZ_\la^jdx=0 \; \; j=1,2.\end{aligned}\right.\eeq
 
 As a first step, it is convenient to solve the problem for $\phi$ as a function of $\xi$. 
 For this purpose, we rewrite  \eqref{inter} in a more convenient form. 
 
 In what follows we denote by $N:K^\perp\to K^\perp$ the nonlinear operator 
$$N(\phi)=\Pi^\perp\({i^*_{p}}\big(       \la V(x)|x|^{2N}e^{PW_\la}(e^{\phi}-1-    \phi) \big)\).$$
 Therefore problem \eqref{inter} turns out to be equivalent to the problem
 
 \beq\label{interop}  L(\phi)+N(\phi)=\tilde R,\quad \phi\in K^\perp\eeq
 where, recalling \eqref{errelambda},  $$\tilde R=\Pi^\perp\({i^*_{p}}\big(R_\la\big)\)=
 \Pi^\perp\(PW_\la -{i^*_{p}}\big(\la V(x)|x|^{2N}e^{P W_\la}\big)\).$$
 
 We need the following auxiliary lemma.
 
 \vspace{2mm}
 \begin{lemma}\label{auxnonl}
For any $p> 1$ there exists $\la_0>0$ such that   for any $\la\in (0,\la_0)$, any $\xi\in \R^2$ with $|\xi|\leq  M\hat\la^\frac12$  and any $\phi_1,\phi_2\in H_0^1(B_1)$ with $\|\phi\|_1,\,\|\phi_2\|<1$ the following holds
\beq\label{skate1}\|e^{\phi_1}-\phi_1-e^{\phi_2}+\phi_2\|_p\leq C\big(\|\phi_1\|+\|\phi_2\|\big)\|\phi_1-\phi_2\|,\eeq
\beq\label{skate11}\Big\|e^{\phi_1}-\phi_1-\frac12\phi_1^2-e^{\phi_2}+\phi_2+\frac12\phi_2^2\Big\|_p\leq C\big(\|\phi_1\|^2+\|\phi_2\|^2\big)\|\phi_1-\phi_2\|,\eeq
 \beq\label{skate2}\|N(\phi_1)-N(\phi_2)\|\leq C\la^{\frac{1-p^2}{ (N+1)p^2}}\big(\|\phi_1\|+\|\phi_2\|\big)\|\phi_1-\phi_2\|.\eeq
 
 \end{lemma}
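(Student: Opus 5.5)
The two pointwise estimates \eqref{skate1} and \eqref{skate11} will come from the mean value theorem combined with Hölder's inequality and the Moser--Trudinger inequality (Lemma \ref{tmt}). The Lipschitz bound \eqref{skate2} for $N$ then follows from \eqref{skate1} via \eqref{isp} and the weighted bounds of Lemma \ref{aux00}.

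For \eqref{skate1}, set $f(t)=e^t-t$, so that $f'(t)=e^t-1$. Then
$$f(\phi_1)-f(\phi_2)=(\phi_1-\phi_2)\int_0^1\big(e^{\phi_s}-1\big)\,ds,\qquad \phi_s=s\phi_1+(1-s)\phi_2 .$$
Using $|e^t-1|\le |t|e^{|t|}$, we get pointwise
$$|f(\phi_1)-f(\phi_2)|\le |\phi_1-\phi_2|\,\big(|\phi_1|+|\phi_2|\big)\,e^{|\phi_1|+|\phi_2|}.$$
Apply Hölder with three exponents, say $3p,3p,3p$:
$$\|f(\phi_1)-f(\phi_2)\|_p\le \|\phi_1-\phi_2\|_{3p}\,\big\||\phi_1|+|\phi_2|\big\|_{3p}\,\big\|e^{|\phi_1|+|\phi_2|}\big\|_{3p}.$$
The first two factors are controlled by the Sobolev embedding $H^1_0(B_1)\hookrightarrow L^q$, giving $C\|\phi_1-\phi_2\|$ and $C(\|\phi_1\|+\|\phi_2\|)$. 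For the last factor, Lemma \ref{tmt} applied to $u=|\phi_1|+|\phi_2|\in H^1_0(B_1)$ (note $\|u\|\le\|\phi_1\|+\|\phi_2\|<2$) gives $\|e^{u}\|_{3p}\le C_p e^{\frac{3p}{16\pi}\cdot 4}\le C$. This proves \eqref{skate1}.

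For \eqref{skate11} the argument is the same with $g(t)=e^t-t-\tfrac12t^2$. Here $g'(t)=e^t-1-t$ and $|g'(t)|\le \tfrac12 t^2e^{|t|}$. Hölder with four factors in $L^{4p}$ then yields $C(\|\phi_1\|+\|\phi_2\|)^2\|\phi_1-\phi_2\|\le 2C(\|\phi_1\|^2+\|\phi_2\|^2)\|\phi_1-\phi_2\|$.

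For \eqref{skate2}, recall that $\Pi^\perp$ is an orthogonal projection, hence of norm $\le1$. Therefore, by \eqref{isp} with some exponent $q>1$ (we take $q=p$ in the end),
$$\|N(\phi_1)-N(\phi_2)\|\le c_q\big\|\la V|x|^{2N}e^{PW_\la}\big(f(\phi_1)-f(\phi_2)\big)\big\|_q .$$
Choose $r>1$ close to $1$ and $s$ with $\frac1q=\frac1r+\frac1s$. Hölder, together with $\sup V\le C$, bounds the right-hand side by
$$\|\la|x|^{2N}e^{PW_\la}\|_r\,\|f(\phi_1)-f(\phi_2)\|_s .$$
The second factor is handled by \eqref{skate1} with exponent $s$. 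The first factor, by Lemma \ref{aux00} with $m=0$, is bounded by $C\la^{-\frac{r-1}{(N+1)r}}$.

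The only real point is matching the exponent to the one claimed. Taking $q=p$ and $r=p^2/(2p-1)$ (or more simply any $r$ with $\frac{r-1}{r}=\frac{p^2-1}{p^2}$, i.e. $r=p^2$, together with $q$ slightly less than $r$) makes $\la^{-\frac{r-1}{(N+1)r}}=\la^{\frac{1-p^2}{(N+1)p^2}}$. Any such choice of exponents with $r\le p^2$ gives the stated bound, since $\la<1$ lets us weaken the power. So the main (mild) obstacle is this bookkeeping of Hölder exponents so that both \eqref{isp} and Lemma \ref{aux00} apply simultaneously with the advertised power of $\la$. Everything else is uniform in $\xi$ with $|\xi|\le M\hat\la^{1/2}$, because Lemma \ref{aux00} is.
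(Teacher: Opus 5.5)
Your proof is correct and follows essentially the same route as the paper. For \eqref{skate1}--\eqref{skate11} it uses the pointwise exponential bounds (which you justify via the mean value theorem), H\"older, Moser--Trudinger and Sobolev; for \eqref{skate2} it uses \eqref{isp}, H\"older and Lemma \ref{aux00}. One bookkeeping slip: $r=p^2/(2p-1)$ is not admissible, since it is smaller than $p$ for $p>1$ and gives $\frac{r-1}{r}=\frac{(p-1)^2}{p^2}$ rather than $\frac{p^2-1}{p^2}$. The choice that works is your alternative $q=p$, $r=p^2$, $s=p^2/(p-1)$. This is exactly the paper's splitting $\|\la|x|^{2N}e^{PW_\la}\|_{p^2}\,\|e^{\phi_1}-\phi_1-e^{\phi_2}+\phi_2\|_{p^2/(p-1)}$.
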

 \begin{proof}
A straightforward computation gives that  the inequality $|e^a-a-e^b+b|\leq e^{|a|+|b|}(|a|+|b|)|a-b|$ holds  for all $a,b\in \R$. Then, by applying H\"older's inequality with $\frac1q+\frac1r+\frac1t=1$, we derive
$$\|e^{\phi_1}-\phi_1-e^{\phi_2}+\phi_2\|_p\leq C\|e^{|\phi_1|+|\phi_2|}\|_{pq}\big(\|\phi_1\|_{pr}+\|\phi_2\|_{pr}\big)\|\phi_1-\phi_2\|_{pt}$$
 and \eqref{skate1} follows by using Lemma \ref{tmt} and the continuity of the embeddings $H^1_0(B_1)\subset L^{pr}(B_1)$ and $H^1_0(B_1)\subset L^{pt}(B_1)$. 
  Similarly, we now use the inequality $|e^a-a-\frac12a^2-e^b+b+\frac12b^2|\leq e^{|a|+|b|}(|a|^2+|b|^2)|a-b|$ which holds  for all $a,b\in \R$ and we obtain 
 $$\Big\|e^{\phi_1}-\phi_1-\frac12\phi_1^2-e^{\phi_2}+\phi_2+\frac12\phi_2^2\Big\|_p\leq C\|e^{|\phi_1|+|\phi_2|}\|_{pq}\big(\|\phi_1\|_{2pr}^2+\|\phi_2\|_{2pr}^2\big)\|\phi_1-\phi_2\|_{pt}$$
 and we obtain \eqref{skate11}. 
 
 Let us prove \eqref{skate2}. According to \eqref{isp}  we get
 $$\|N(\phi_1)-N(\phi_2)\|\leq C\|\la V(x)|x|^{2N} e^{PW_\la}(e^{\phi_1}-\phi_1-e^{\phi_2}+\phi_2)\|_p,$$
and by H\"older's inequality with $\frac1p+\frac1q=1$ we derive 
$$\begin{aligned}\|N(\phi_1)-N(\phi_2)\|&\leq C\|\la |x|^{2N}e^{PW_\la}\|_{p^2}\|e^{\phi_1}-\phi_1-e^{\phi_2}+\phi_2|\|_{pq}\\ &\leq C\|\la |x|^{2N}e^{PW_\la}\|_{p^2}\big(\|\phi_1\|+\|\phi_2\|\big)\|\phi_1-\phi_2\|\end{aligned}
$$ by \eqref{skate1}, and  the conclusion follows recalling Lemma \ref{aux00}. 
\end{proof}
Problem \eqref{inter} or, equivalently, problem \eqref{interop}, is solvable for any  $\xi$ with $|\xi|\leq M\hat\la^\frac12$, provided
that $\la$ is sufficiently small. Indeed, we state the following  result, which  is divided into two parts corresponding to the two settings described in Propositions  \ref{auxaux} and  \ref{auxauxaux}.

\begin{prop}\label{nonlaux}  Let $N\geq 1$. Assume  that, along with \eqref{eqq1},   there exists $i_0\in\N$ with $ i_0\geq1$ such that $V\in C^{2i_0+1}(U)$ in a neighborhood of $0$  and \eqref{mangia} holds.  Then, for any $\e\in (0,\frac{1}{2(N+1)})$ there exists $\la_0>0$ such that for any $\la\in (0,\la_0)$ and any $\xi\in \R^2$ with $|\xi|<M\hat\la^\frac12$ there is a unique $\phi_\la=\phi_{\la,\xi}\in K^\perp$ satisfying \eqref{inter} for some $c_1,c_2\in \R$ and $$\|\phi_{\la}\|\leq \left\{\begin{aligned}& \la^{\frac{i_0+1}{2(N+1)}}\la^{-\e} &\hbox{ if }&i_0\leq N+1\\ & \la^{\frac12+\frac{1}{2(N+1)}}\la^{-\e}& \hbox{ if }& i_0>N+1\end{aligned} \right. .$$ \end{prop}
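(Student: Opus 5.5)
We recast \eqref{interop} as a fixed point problem on $K^\perp$. By Proposition \ref{ex}, $L$ is invertible with $\|L^{-1}\|\le C|\log\la|$, so \eqref{interop} is equivalent to
$$\phi=T(\phi):=L^{-1}\big(\tilde R-N(\phi)\big),\qquad \phi\in K^\perp .$$
We will show that $T$ is a contraction on the ball
$$\mathcal B:=\{\phi\in K^\perp:\ \|\phi\|\le \rho_\la\},$$
where $\rho_\la=\la^{\frac{i_0+1}{2(N+1)}}\la^{-\e}$ if $i_0\le N+1$ and $\rho_\la=\la^{\frac12+\frac1{2(N+1)}}\la^{-\e}$ if $i_0>N+1$. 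Existence and uniqueness of $\phi_\la$ in $\mathcal B$ then follow from Banach's theorem. The constants $c_1,c_2$ are recovered from the projection onto $K$, exactly as in the equivalence between \eqref{inter} and \eqref{interop}.

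\textbf{Bound on the error term.} By \eqref{isp},
$$\|\tilde R\|\le c_p\|R_\la\|_p .$$
We use \eqref{cese11} when $i_0\le N+1$ and \eqref{cese111} when $i_0>N+1$. Both are valid uniformly for $|\xi|\le M\hat\la^{1/2}$, and together they give
$$\|\tilde R\|\le C\,\sigma_\la\,\la^{-\frac{p-1}{(N+1)p}},$$
where $\sigma_\la$ equals $\rho_\la\la^{\e}$. We now fix $p>1$ so close to $1$ that
$$\frac{p-1}{(N+1)p}+\frac{1-p^2}{(N+1)p^2}\cdot(-1)<\frac{\e}{2}.$$
Absorbing the logarithm into a small power of $\la$, we obtain
$$\|L^{-1}\tilde R\|\le C|\log\la|\,\sigma_\la\la^{-\e/2}\le \tfrac12\rho_\la$$
for $\la$ small.

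\textbf{Estimates on the nonlinear part.} Let $\phi_1,\phi_2\in\mathcal B$. Since $\rho_\la\to0$, we have $\|\phi_i\|<1$, so Lemma \ref{auxnonl} applies. Estimate \eqref{skate2} gives
$$\|T(\phi_1)-T(\phi_2)\|\le C|\log\la|\,\la^{\frac{1-p^2}{(N+1)p^2}}\,2\rho_\la\,\|\phi_1-\phi_2\|.$$
Because $\rho_\la\le\la^{\frac{1}{(N+1)}-\e}$, and $\e<\frac1{2(N+1)}$ while $p$ is close to $1$, the prefactor is at most $C|\log\la|\la^{\frac1{2(N+1)}-\e/2}$. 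This tends to $0$, so $T$ is a contraction with constant $\le\frac12$.

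Taking $\phi_2=0$ and using $N(0)=0$ gives $\|L^{-1}N(\phi)\|\le\frac12\rho_\la$. Combined with the bound on $L^{-1}\tilde R$, this shows that $T$ maps $\mathcal B$ into itself.

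\textbf{Main obstacle.} The delicate point is the bookkeeping of exponents. Three losses must all fit inside the margin $\la^{-\e}$ while keeping the contraction constant small:
\begin{itemize}
\item the factor $\la^{-\frac{p-1}{(N+1)p}}$ from \eqref{cese11}/\eqref{cese111};
\item the factor $\la^{\frac{1-p^2}{(N+1)p^2}}$ from \eqref{skate2};
\item the factor $|\log\la|$ from $\|L^{-1}\|$.
\end{itemize}
This is handled by choosing $p=p(\e)$ close enough to $1$, using that $\rho_\la$ always carries at least a factor $\la^{\frac{1}{N+1}-\e}$.

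The constants must also be uniform in $\xi$ for $|\xi|<M\hat\la^{1/2}$. This holds because Propositions \ref{ex}, \ref{auxaux}, \ref{auxauxaux} and Lemma \ref{aux00} are all uniform in that range.
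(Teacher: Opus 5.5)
Your proposal is correct and is essentially the paper's proof. It runs the same contraction argument for $T=L^{-1}(\tilde R-N(\phi))$ on the ball of radius $\rho_\la$, bounding $\|\tilde R\|$ via \eqref{isp} together with Propositions \ref{auxaux} and \ref{auxauxaux}, bounding the nonlinearity via \eqref{skate2}, and absorbing the $|\log\la|$ and $p$-dependent losses into the $\la^{-\e}$ margin by taking $p$ close to $1$; your $\e/2$ plays the role of the paper's auxiliary exponent $\eta<\e$, and your bookkeeping ($\rho_\la\le\la^{\frac1{N+1}-\e}$ with $\e<\frac1{2(N+1)}$, so the contraction constant tends to $0$) is accurate.
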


\begin{proof} Let us consider, for instance,  the case $i_0\leq N+1$. Since, as we have observed, problem \eqref{interop} is equivalent to problem \eqref{inter}, 
we will show that problem \eqref{interop} can be solved via a contraction mapping argument. Indeed, in virtue of Proposition \ref{ex}, let us introduce the map
$$T:=L^{-1}(\tilde R-N(\phi)),\quad \phi\in K^\perp.$$
 Let us fix $$0<\eta<\e$$ and $p>1$ sufficiently close to 1. According to \eqref{isp} and Proposition \ref{auxaux} we have
\beq\label{non1}\|\tilde R\|\leq C\la^{\frac{i_0+1}{2(N+1)}}\la^{-\eta} .\eeq
Similarly, by \eqref{skate2}, 
\beq\label{non2}\|N(\phi_1)-N(\phi_2)\|\leq C\la^{-\eta}(\|\phi_1\|+\|\phi_2\|)\|\phi_1-\phi_2\|\quad \forall \phi_1,\phi_2\in H_0^1(B_1), \|\phi_1\|,\|\phi_2\|<1.\eeq In particular, by taking $\phi_2=0$, 
\beq\label{non3}\|N(\phi)\|\leq C\la^{-\eta}\|\phi\|^2\quad \forall \phi\in H_0^1(B_1), \|\phi\|<1.\eeq

We claim that $T$ is a contraction map over the ball $$\Big\{\phi\in K^\perp\,\Big|\, \|\phi\|\leq \la^{\frac{i_0+1}{2(N+1)}}\la^{-\e}\Big\}$$ provided that $\la$ is small enough. Indeed, combining  Proposition \ref{ex}, \eqref{non1}, \eqref{non2}, \eqref{non3} with the choice of $\eta$,
we have 
$$\|T(\phi)\|\leq C|\log\la|(\la^{\frac{i_0+1}{2(N+1)}}+\la^{-\eta}\|\phi\|^2)<\la^{\frac{i_0+1}{2(N+1)}}\la^{-\e},$$
$$\begin{aligned}\|T(\phi_1)-T(\phi_2)\|&\leq C|\log\la|\|N(\phi_1)-N(\phi_2)\|\leq C\la^{-\eta}|\log\la| (\|\phi_1\|+\|\phi_2\|)\|\phi_1-\phi_2\|<\frac12\|\phi_1-\phi_2\|.\end{aligned}$$

The case $i_0> N+1$ can be handled  similarly, with Proposition~\ref{auxauxaux} replacing Proposition~\ref{auxaux}.
\end{proof}

{\Large{\section{Proof of Theorems \ref{th4}, \ref{th5}, \ref{mest}.}  }}

After problem \eqref{inter} has been solved according to Propositions \ref{nonlaux}, then we find a solution to the original problem \eqref{eq0}--\eqref{eqq0} if $\xi$   is such that $$c_j=0\hbox{ for }j=1,2.$$ 
 Let us find the condition satisfied by $\xi$ in order to get the $c_j$'s equal to zero. 
We multiply the equation in \eqref{inter} by   $PZ_\la^j$ 
and integrate over $B_1$:
\beq\label{masca}\begin{aligned}&\into \nabla (PW_\la+\phi_{\la}) \cdot\nabla PZ_\la^jdx -\la \into V(x) |x|^{2N}e^{P W_\la+\phi_{\la}}PZ_\la^j dx\\&=\sum_{h=1,2}c_h \into Z_\la^h|x|^{2N} e^{W_\la}PZ_\la^j dx.\end{aligned}
\eeq 
The object  is now to expand each integral of the above identity and analyze the  leading term.
 Let us begin by observing that  the orthogonality in \eqref{inter} gives
\beq\label{masca1} \into \nabla \phi_{\la}\cdot \nabla PZ_\la^j dx =
0\eeq
and, using \eqref{pzi}, 
by a direct computation we obtain \beq\label{insuff}\begin{aligned}&\into Z_\la^h|x|^{2N} e^{W_\la}PZ_\la^j dx
= \intr Z_\la^jZ_\la^h|x|^{2N} e^{W_\la}dx+o(1)=\left\{\begin{aligned}&\frac23 (N+1){\pi}+o(1) &\hbox{ if }h=j\\ &o(1)&\hbox{ if }h\neq j\end{aligned}\right..\end{aligned}\eeq
Using \eqref{errelambda}
we get
\beq\label{coll}\begin{aligned}&\int_{B_1} \nabla PW_\la \cdot\nabla PZ^j_{\la} dx -\la \int_{B_1} V(x) |x|^{2N}e^{P W_\la}PZ^j_{\la} dx=
\int_{B_1} R_\la PZ^j_{\la} dx.\end{aligned}\eeq 
By inserting \eqref{masca1}--\eqref{coll} into \eqref{masca} we obtain
\beq\label{luis}\into R_\la PZ_\la^j=\frac23(N+1)\pi c_j+o(c_1)+o(c_2)+ \la \into |x|^{2N} V(x) e^{P W_\la}(e^{\phi_\la}-1)PZ^j_{\la} dx.\eeq
Let us pass to examine  the term containing the function $\phi_\la$: 
\beq\label{mos}\begin{aligned}\la \into |x|^{2N} V(x) e^{P W_\la}(e^{\phi_\la}-1)PZ^j_{\la} dx&=\la \into |x|^{2N} V(x) e^{P W_\la}(e^{\phi_\la}-1-\phi_\la)PZ^j_{\la} dx \\ &\;\;\;\;-\into R_\la \phi_\la PZ_\la^j dx  +\into |x|^{2N}e^{W_\la}\phi_\la  PZ_\la^j dx.\end{aligned}\eeq
Now, let us fix $\e>0$ sufficiently small and $p>1$ sufficiently close to 1.  
Next let  $1<q<\infty$ be such that  $\frac1p+\frac1q=1$. Then, 
\eqref{skate1} with $\phi_2=0$ and Proposition \ref{nonlaux} give
$$\|e^{\phi_\la}-1-\phi_\la\|_q\leq  C\|\phi_\la\|^2 
$$
and, consequently, 
$$\|e^{\phi_\la}-1\|_q\leq C\|\phi_\la\|
.$$
Therefore,  Lemma \ref{aux00}  implies
 \beq\label{capra1}\begin{aligned} 
 \la\int_{B_1} |x|^{2N} V(x) e^{P W_\la}(e^{\phi_\la}-1-\phi_\la)PZ^j_{\la} dx
 &= O(\| \la |x|^{2N}e^{ P W_\la}(e^{\phi_\la}-1-\phi_\la)\|_1)\\ &=
O(\| \la  |x|^{2N}e^{P W_\la} \|_p\|e^{\phi_\la}-1-\phi_\la\|_q)\\ &=O\big(\la^{-\frac{p-1}{2p}} \|\phi_\la\|^2\big)
\end{aligned}\eeq uniformly for $|\xi|\leq M \hat\la^\frac12$. 
Similarly, \beq\label{olifis2}\begin{aligned}\into R_\la\phi_\la PZ_\la^j dx  &=O\big(\|R_\la\|_p\|\phi_\la\|_q\big)=O\big(\|R_\la\|\|\phi_\la\|\big)\end{aligned}\eeq
uniformly for $|\xi|\leq M \hat\la^\frac12$. 
Finally, recalling the orthogonality \eqref{masca1}, combining Lemma \ref{pziraff} and Lemma \ref{aux00},  
\beq\label{olifis3}\begin{aligned}\into|x|^{2N}e^{W_\la}\phi_\la PZ_\la^j dx&=\into |x|^{2N}e^{W_\la}|\phi_\la|O(\la^\frac12|x|^{N+1}+\la)dx
\\ &=O\big(\|(\la^\frac12|x|^{N+1}+\la)|x|^{2N}e^{W_\la})\|_p\|\phi_\la\|_q\big)=O(\la^{1-\frac{p-1}{p(N+1)}}|\phi_\la\|\big)\end{aligned}
\eeq uniformly for $|\xi|\leq M \hat\la^\frac12$. 
 By inserting \eqref{capra1}--\eqref{olifis3} into \eqref{mos}, and next into \eqref{luis}, we arrive at
 \beq\label{torcer}\begin{aligned}\into R_\la PZ_\la^jdx=&\frac23(N+1)\pi c_j+o(c_1)+o(c_2)\\ &+O\Big(\la^{-\frac{p-1}{2p}}\|\phi_\la\|^2+\|R_\la\|\|\phi_\la\|+\la^{1-\frac{p-1}{p(N+1)}}\|\phi_\la\| \Big)\end{aligned}\eeq
 for $j=1,2$, uniformly for $|\xi|\leq M \hat\la^\frac12$. 
 We now conclude separately the proofs of Theorems \ref{th4}, \ref{th5}, \ref{mest}.
 
 \vspace{5mm}
 
\subsection{Proof of Theorem \ref{th4}.} 
Assume that $N\geq 1$ and that, along with \eqref{eqq1}, there exists $i_0\geq 1$  
such that $V$ is of class $C^{2i_0+1}(U)$ in a neighborhood $U$ of $0$ and assumption \eqref{mangia} holds. Equivalently, by \eqref{homo1} and Lemma \ref{finallemma}, $i_0$ corresponds to the first index such that $\alpha_{2i_0}\neq 0$ and 
$${\rm q}_i=0\quad \forall 1\leq i\leq i_0.$$
 First consider the case $i_0\leq N+1$. Then, according to Proposition \ref{auxaux} and 
 Proposition \ref{nonlaux}
\beq\label{valc}
\|\phi_{\la}\|\leq \la^{\frac{i_0+1}{2(N+1)}}\la^{-\e} ,\qquad \|R_\la\|\leq C  \la^{\frac{i_0+1}{2(N+1)}}\la^{-\frac{p-1}{p(N+1)}}\eeq 
uniformly for $|\xi|\leq M  \hat\la^{\frac12}$, 
so that 
 \eqref{torcer} takes the vector form\beq\label{ario}\into R_\la PZ_\la dx=\frac23(N+1) \pi c+o(c)+O
\big(\la^{\frac{2i_0+1}{2(N+1)}}
\big)\eeq uniformly for $|\xi|\leq M  \hat\la^{\frac12}$, 
 provided that $\e>0$ is sufficiently small and $p>1$ is sufficiently close to $1$.
 \

First consider the  two cases:  

\begin{itemize}
\item  $N\geq 2i_0$, 

\item  $N+1<2i_0<2(N+1)$ and the Taylor expansion of $V$ at $0$ between degrees $N+1$ and $2i_0$ has vanishing angular Fourier mode of order $N+1$.

\end{itemize}
Equivalently, using \eqref{homo1}, suppose that, either

$$N\geq  2 i_0$$ 
or  $$N+1<2i_0<2(N+1)\hbox{ and }\beta_{N+1}=\gamma_{i,N+1}=0\hbox{ for }i=N+2,\ldots, 2i_0$$
so that combining \eqref{ario} with  \eqref{dublend1}, \eqref{dublend2} we obtain 
\beq\label{ttcc} \alpha_{2i_0} \Gamma_{2i_0}\hat\la^{\frac{i_0}{N+1}-\frac12}\xi+O\big(\la^{\frac{2i_0+1}{2(N+1)}}+
|\xi|^2\la^{\frac{i_0}{N+1}-1}\big)
=-\frac23(N+1)\pi c+o(c),\eeq uniformly for $|\xi|\leq M  \hat\la^{\frac12}$.  So, setting $$ \xi=\hat\la^{\frac12+\frac{1}{3(N+1)}} \tilde \xi$$ we rewrite \eqref{ttcc} as \beq\label{tccone1x}\alpha_{2i_0}\Gamma_{2i_0}\la^{\frac{3i_0+1}{3(N+1)}}\tilde\xi+o\big(\la^{\frac{3i_0+1}{3(N+1)}} \big)= -\frac23(N+1)\pi c+ o(c)\hbox{ unif. for }|\tilde\xi|\leq M.\eeq

The continuity of the map $\xi\mapsto \phi_\la=\phi_{\la,\xi}$, guaranteed by Proposition \ref{nonlaux},  implies that the left hand
side of \eqref{tccone1x} is continuous too.  
Since $0$ is a zero for the identity map which is stable under uniform perturbations, and  $\alpha_{2i_0}\neq 0$, then the uniform stability gives that, fixed $0<\eta<1$,
 for $\la$ small enough the left hand side of \eqref{tccone1x} has a zero $\tilde \xi_\la$ with $|\tilde \xi_\la|\leq \eta$ or, equivalently,
the left hand side of \eqref{ttcc} has a zero $\xi_\la$ with $|\xi_\la|\leq \hat\la^{\frac12+\frac{1}{3(N+1)}}\eta $. So for such $\xi_\la$  the linear system \eqref{ttcc} has only the trivial solution $c_1=c_2=0$. The arbitrariness of $\eta$ implies $$\xi_\la =o(\la^{\frac12+\frac{1}{3(N+1)}
}).$$ 
 Then, once a solution of problem \eqref{inter} with 
$c_j=0$ has been found, we have also obtained a solution to the original problem \eqref{eq0}--\eqref{eqq0}
of the form 
 $$v_\la=PW_\la+\phi_\la$$   for some $\xi=\xi_\la=o(\la^\frac12
 )$ and $\phi_\la$ satisfying \eqref{valc}.
 We claim that $\phi_\lambda$ is uniformly small in $B_1$. Indeed, $\phi_\lambda$ satisfies
$$
-\Delta \phi_\lambda
=
\lambda V(x)|x|^{2N}e^{PW_\lambda}
\bigl(e^{\phi_\lambda}-1\bigr)
-R_\lambda.
$$
Fix $p>1$ sufficiently close to $1$ and let $1<q<\infty$ such that $\frac1p+\frac1q=1$. By H\"older's inequality, Lemma~\ref{aux00},
Lemma~\ref{auxnonl} (with $\phi_1=\phi_\lambda$ and $\phi_2=0$), and \eqref{valc}, we obtain
$$
\begin{aligned}
\|\Delta \phi_\lambda\|_p
&\leq C\|\lambda |x|^{2N}e^{PW_\lambda}\bigl(e^{\phi_\lambda}-1\bigr)+R_\la\|_p\\ &
C\|\lambda |x|^{2N}e^{PW_\lambda}\|_{p^2}
\left(
\|e^{\phi_\lambda}-1-\phi_\lambda\|_{pq}
+\|\phi_\lambda\|_{pq}
\right)
+\|R_\lambda\|_p\\
&\leq
C\lambda^{-\frac{p^2-1}{p^2(N+1)}}
\left(
\|\phi_\lambda\|^2+\|\phi_\lambda\|
\right)
+\|R_\lambda\|_p\\
&\leq C\lambda^\sigma
\end{aligned}
$$
for some $\sigma>0$, provided that $p>1$ is sufficiently close to $1$ and
$\varepsilon>0$ is sufficiently small. Therefore, by Dirichlet elliptic regularity and the embedding
$
W^{2,p}(B_1)\hookrightarrow L^\infty(B_1)$,
 $p>1,
$
we conclude that
$$
\|\phi_\lambda\|_\infty=o(1).
$$
  We also have ${\cal H}(x^{N+1},\xi_\la)=o(1)$ and $|x^{N+1}-\xi_\la|^2=|x|^{2(N+1)}(1+o(1))+o(\la)$ uniformly in $B_1$, by which, using 
  \eqref{pro-exp1}, $$v_\la= -2\log\big(\hat\la+|x|^{2(N+1)}\big)
+o(1).$$ 
Moreover, by concavity we get  $\log(\hat\la+|x|^{2(N+1)})\geq \frac12\log(2\hat\la)+\frac12\log (2|x|^{2(N+1)})$ and, consequently, 

$$ \begin{aligned}v_\la(x) +2(N+1)\log|x|+\log\la&\leq -\log(2\hat\la)-\log (2|x|^{2(N+1)}) +2(N+1)\log|x|+\log\la+o(1)\\ &=\log(2(N+1)^2)+o(1)\end{aligned} $$which implies that $v_\la$ exhibits simple blow-up in the sense of \eqref{simple}. This concludes the proof of Theorem \ref{th4} when $N\geq 2i_0$ or $N+1<2i_0<2(N+1)$. 

\

We pass to consider the case when $i_0=N+1$, the Taylor polynomial of $V$ at $0$ formed by the homogeneous terms of degrees $N+1,\dots,2i_0$ has no angular Fourier component of order $N+1$, and  $ \Delta^{i_0} V(0) \neq4^{i_0}(i_0!)^2!(2\pm|\beta_{2i_0}|)$; in particular, using \eqref{homo1} and  Lemma       \ref{finallemma}, this implies $$i_0=N+1, \;\;\gamma_{i,N+1}=0\;\;\forall i=N+2,\ldots, 2N+2,\hbox{ and }\alpha_{2N+2}-2\neq \pm|{\beta}_{2N+2}|.$$ 
Under the assumptions of Theorem \ref{th4}, $\beta_{2N+2}=0$, so the above nondegeneracy condition reduces to $\alpha_{2N+2}\neq2$.
In view of  \eqref{dublend4}, \eqref{ario} can be rewritten as 

\beq\label{titti}2(\alpha_{2N+2}\xi+{\beta}_{2N+2}\overline\xi-2\xi)\hat\la^\frac12 +O(\la^{1+\frac{1}{2(N+1)}})=-\frac{c}{3} + o(c)\eeq uniformly for $|\xi|\leq M\hat\la^\frac12$. 
Setting $$ \xi=\hat\la^{\frac12+\frac{1}{3(N+1)}} \tilde \xi$$ we rewrite \eqref{titti} as
\beq\label{tatta}2(\alpha_{2N+2}\tilde\xi+{\beta}_{2N+2}\overline{\tilde \xi}-2\tilde\xi)\hat\la^{1+\frac{1}{3(N+1)}} +o\big(\la^{1+\frac{1}{3(N+1)}} \big)= -\frac{c}{3} + o(c)\hbox{ unif. for }|\tilde\xi|\leq M.\eeq
 Since the map $$\xi\mapsto\alpha_{2N+2}\xi+{\beta}_{2N+2}\overline\xi -2\xi$$ has determinant $$(\alpha_{2N+2}-2)^2-|{\beta}_{2N+2}|^2,$$ it follows that, if $\alpha_{2N+2}-2\neq\pm|{\beta}_{2N+2}|$ then 0 is a nondegenerate (and hence stable) zero; so proceeding in the same way as for  \eqref{tccone1x}, and fixing an arbitrary $0<\eta<1$, we find that the left-hand side of \eqref{tatta} admits a zero $\tilde \xi_\la$ such that $|\tilde\xi_\la|<\eta$. Equivalently, the left-hand side of \eqref{titti} has a zero 
$\xi_\la$ satisfying $|\xi_\la|\leq \eta \hat\la^{\frac12+\frac{1}{3(N+1)}} $. For this value of $\xi_\la$, the corresponding linear system \eqref{titti} has only the trivial solution 
$c_1=c_2=0$. Since $\eta$ was arbitrary, we conclude that $$\xi_\la=o\big( \la^{\frac12+\frac{1}{3(N+1)}}\big) .$$
In particular, $\xi_\la=o(\la^\frac12)$, and we can therefore proceed exactly as for the previous case.

 \
 
 Next we turn to the proof of Theorem \ref{th4} in the complementary case $i_0> N+1.$ We now apply Proposition \ref{auxauxaux} in combination with Proposition \ref{nonlaux} and obtain \beq\label{valch}\|\phi_{\la}\|\leq \la^{\frac12+\frac{1}{2(N+1)}}\la^{-\e} ,\qquad \|R_\la\|\leq C  \la^{\frac12+\frac{1}{2(N+1)}}\la^{-\frac{p-1}{p(N+1)}}\eeq 
uniformly for $|\xi|\leq M  \hat\la^{\frac12}$. Consequently, \eqref{torcer} reads:
\beq\label{tisi}\into R_\la PZ_\la dx =\frac23 \pi(N+1)c+o(c)+
O
\big(\la^{1+\frac{1}{2(N+1)}}\big)\eeq uniformly for $|\xi|\leq M  \hat\la^{\frac12}$, 
 provided that $\e>0$ is sufficiently small and $p>1$ is sufficiently close to $1$.
 
Let us distinguish the cases $N+1< i_0<2N+2$ and $i_0\geq 2N+2$. 
 
 First, let us focus on the case when $N+1<i_0<2(N+1)$, the homogeneous terms in  the Taylor polynomial of $V$ at $0$ of degrees between $N+1$ and $2i_0$
have zero angular Fourier component of order $N+1$, and $|\beta_{2N+2}|\neq2$; in other words, 

 $$N+1<i_0<2(N+1)\;\;\;\;\gamma_{i,N+1}=0\;\;\forall i=i_0+1,\ldots, 2N+2,\hbox{ and }\;\;|\beta_{2N+2}|\neq 2$$
 so that, according to \eqref{dublend60}, \eqref{tisi} becomes
 \beq\label{tuttu}2({\beta}_{2N+2}\overline\xi-2\xi)\hat\la^\frac12 +O(\la^{1+\frac{1}{2(N+1)}})=-\frac{c}{3}+ o(c)\eeq
 uniformly for $|\xi|\leq M\hat\la^\frac12$. 
This equation  can then be treated exactly as \eqref{titti}, leading to the existence of $\xi_\la=o(\la^{\frac12+\frac{1}{3(N+1)}})$, for which the corresponding linear system \eqref{tuttu} admits only the trivial solution
$c_1=c_2=0$. The conclusion therefore follows exactly as in the previous cases.

\

Finally, assume $i_0\geq 2N+2$: then \eqref{dublend6} and \eqref{tisi} yield $$4\hat\la^\frac12\xi+O\big(\la^{1+\frac{1}{2(N+1)}}\big)= \frac{c}{3}+o(c)$$ uniformly for $|\xi|\leq M  \hat\la^{\frac12}$. 
This equation can then be treated exactly as \eqref{titti}.

\

\begin{rmk}\label{caseNbet}
Let us observe that if in Theorem \ref{th4} we consider the regime $N+1<2i_0<2(N+1)$ and $\beta_{N+1}\neq 0$, then, in view of \eqref{dublend2},  \eqref{ttcc} takes the form
\beq\label{art}\alpha_{2i_0}\Gamma_{2i_0}\hat\la^{\frac{i_0}{N+1}-\frac12}\xi+2\pi(N+1)\beta_{N+1}\hat\la^\frac12+\hbox{h.o.t}=- \frac23(N+1)\pi c+o(c) 
\eeq uniformly for $|\xi|\leq M\hat\la^\frac12$. 
Since the vector field $$\xi\mapsto \alpha_{2i_0}\Gamma_{2i_0}\hat\la^{\frac{i_0}{N+1}-\frac12}\xi+2\pi(N+1)\beta_{N+1}\hat\la^\frac12$$
has no zeros for $|\xi|\le M \hat\lambda^\frac12$, it follows that the left-hand side of \eqref{art} does not vanish  in this region. Hence the method used in the proof of Theorem \ref{th4} breaks down. 

An analogous argument holds for $\gamma_{i,N+1}\neq 0$ when $i=N+2,\ldots, 2i_0$. 
 
 \end{rmk}

\

\subsection{Proof of Theorem \ref{th5}.} We next prove  Theorem \ref{th5} and  assume that $N\geq 1$ is odd and 
 $$ 2i_0=N+1\;\;\hbox{ and }\;\; {\rm q}_i=0\;\;\forall 1\leq i\leq i_0
 .$$
 
 Then, according to Proposition \ref{auxaux} and 
 Proposition \ref{nonlaux}
\beq\label{valco}
\|\phi_{\la}\|\leq \la^{\frac14+\frac{1}{4i_0}}\la^{-\e} ,\qquad \|R_\la\|\leq C  \la^{\frac14+\frac{1}{4i_0}}\la^{-\frac{p-1}{p(N+1)}}\eeq 
uniformly for $|\xi|\leq M  \hat\la^{\frac12}$, 
so that 
 \eqref{torcer} takes the vector form\beq\label{ariooo}\into R_\la PZ_\la dx=\frac23(N+1) \pi c+o(c)+O
\big(\la^{\frac12+\frac{1}{4i_0}}
\big)\eeq uniformly for $|\xi|\leq M  \hat\la^{\frac12}$, 
 provided that $\e>0$ is sufficiently small and $p>1$ is sufficiently close to $1$.
  So   \eqref{ariooo} and  \eqref{dublend3} imply 
 \beq\label{ttcc1} \begin{aligned}&4  \alpha_{2i_0}\hat\la^\frac12{\cal F}(\hat\la^{-\frac12}\xi)
+\pi  \hat\la^\frac12\beta_{2i_0}
+o(\la^\frac12)=-\frac{c}{3}\pi +o(c)\end{aligned}
\eeq uniformly for $|\xi|\leq M  \hat\la^{\frac12}$. 
Now, setting $$\tilde \xi= \hat\la^{-\frac12} \xi$$
 we rewrite \eqref{ttcc1} as 
 \beq\label{ttcc2} \begin{aligned}&4   \alpha_{2i_0}\hat\la^\frac12{\cal F}(\tilde \xi)
+\pi  \hat\la^\frac12 \beta_{2i_0} +o(\la^\frac12)
=-\frac{c}{3}\pi +o(c)\hbox{ unif. for }|\tilde \xi|\leq M.
\end{aligned}
\eeq 
 According to Theorem \ref{th5} suppose that $|\Delta^{i_0}V(0)|>\kappa_N|\beta_{2i_0}|$, or, equivalently, in view of  Lemma \ref{finallemma}, $|\beta_{2i_0}|<|\alpha_{2i_0}|$. By Lemma \ref{fibrafibrafibra}, let $\xi^*\in\R^2$ be such that
 $${\cal F}(\xi^*)= -\frac{\pi}{4} \frac{\beta_{2i_0}}{\alpha_{2i_0}}.$$ According to Lemma \ref{fibrafibrafibra} we have ${\rm det} \,D{\cal F}(\xi^*)\neq 0$, so $\xi^*$ is a zero for the map $\tilde \xi\mapsto 4   \alpha_{2i_0}{\cal F}(\tilde \xi)+\pi   \beta_{2i_0} $ which is stable under uniform perturbations. Assuming that $M>0$ is sufficiently large such that $|\xi^*|< M $, then the uniform stability gives that, fixed $0<\eta<1$,  for $\la$ small enough the left hand side of \eqref{ttcc2} has a zero $\xi^*_\la$ with $|\xi_\la^*-\xi^*|\leq \eta$ or, equivalently, the left hand side of \eqref{ttcc1} has a zero $\xi_\la$ with $ |\xi_\la- \hat\la^{\frac12}\xi^*|\leq  \hat\la^{\frac12} \eta. $ So, for such $\xi_\la$ the linear system \eqref{ttcc1} has only the trivial solution $c_1=c_2=0$. The arbitrariness of $\eta$ implies
\beq\label{ariaaaa}\xi_\la- \hat\la^{\frac12}\,\xi^*=o(\la^\frac12).\eeq
We may therefore proceed in the same manner as in Theorem \ref{th4}. Then, once a solution of problem \eqref{inter} with 
$c_j=0$ has been found, we have also obtained a solution to the original problem \eqref{eq0}--\eqref{eqq0}  
of the form 
 $$v_\la=PW_\la+\phi_\la$$   for some $\xi=\xi_\la$
  and $\phi_\la$ satisfying \eqref{valco} and \eqref{ariaaaa}. Proceeding as in the proof of Theorem \ref{th4}, we also have $\phi_\la=o(1) $ uniformly in $B_1$. Moreover,  ${\cal H}(x^{N+1},\xi_\la)=o(1)$ and $|x^{N+1}-\xi_\la|^2=|x^{N+1}- \hat\la^{\frac12}\xi^*|^{2}(1+o(1))+o(\la)$. Hence,   using 
  \eqref{pro-exp1}, $$v_\la= -2\log\big(\hat\la+\big|x^{N+1}-\hat\la^\frac12\xi^*\big|^{2}\big)
+o(1).$$ 
Moreover, we have  $\hat\la+|x^{N+1}-\hat\la^{\frac12}\, \xi^*|^2\geq c(\hat\la+ |x|^{2(N+1)})$ with $c=\inf_{\R^2}\frac{1+|y^{N+1}- \xi^*|^2}{1+|y|^{2(N+1)}}$, whereas  by concavity we obtain  $\log(\hat\la+|x|^{2(N+1)})\geq \frac12\log(2\hat\la)+\frac12\log (2|x|^{2(N+1)})$, consequently, 

$$ \begin{aligned}&v_\la(x) +2(N+1)\log|x|+\log\la\\&\leq -2\log c-2\log\big(\hat\la+|x|^{2(N+1)}\big)+2(N+1)\log|x|+\log\la+o(1)\\ &\leq -2\log c-\log\big(2\hat\la\big)-\log (2|x|^{2(N+1)}) +2(N+1)\log|x|+\log\la+o(1)=\log\frac{2(N+1)^2}{c^2}+o(1)\end{aligned}$$ which implies that $v_\la$ exhibits simple blow-up in the sense of \eqref{simple}. 

\

\subsection{Proof of Theorem \ref{mest}.} 
Let us fix an odd integer $N\geq 1$  and construct  a family of  coefficient functions $V_{\delta}$, parametrized with $\delta\in(0,1)$,  satisfying 
$$\max_{|\mu|=\frac{N+3}{2}}|D^\mu V^\delta (0)|\geq \frac1C ,
$$
$$V_\delta(0)=1,\quad \|V_\delta\|_{C^{N+2}(\overline B_1)}\leq C ,\quad \frac1C\leq V_\delta(x)\leq C\quad \forall x\in \overline B_1,$$
$$\frac{|{\beta}_{N+1}^\delta|}{|\alpha_{N+1}^\de|} \to 1^-  \hbox{ as } \delta\to 0^+,$$
for some positive constant $C$ independent of $\de$. Here $\Re(\overline{\beta}_{N+1}^\delta x^{N+1})$ denotes the harmonic component of order $N+1$ in the expansion of $V_\delta$ at the origin, and $\alpha_{N+1}^\de=\frac{1}{\kappa_N}\Delta^{\frac{N+1}{2}}V_\delta(0)$, according to Lemma \ref{finallemma}. An example of such a family is given by 

$$V_\delta(x)=1+\frac14\Re(x^{\frac{N+3}{2}})+\frac{1}{4}|x|^{N+1}+\frac{1-\de}{4}\Re(x^{N+1})\hbox{ if } N\geq 3$$
and $$V_\delta (x)=1+\frac14|x|^2+\frac{1-\de}{4}\Re(x^2)\hbox{ if }N=1.$$
Indeed, for such $V_\de$ we have 

\begin{itemize}

\vspace{2mm }

\item $V_\de(0)=1$, $\nabla V(0)=0$, $\frac12\leq V_\delta (x)\leq \frac74\quad \forall x\in \overline{B}_1$;

\vspace{2mm}

\item $\Delta^{\frac{N+1}{2}} V_\de(0)= \frac{\kappa_N}{4}$ and, if $N\geq 3$,   $  \Delta V_\de(0)=\Delta^2 V_\de(0)=\ldots =\Delta^{\frac{N-1}{2}}V_\de (0)=0$;

\vspace{2mm}

\item $D^\mu V_\de(0)=0$ if $1\leq |\mu|\leq \frac{N+1}{2};$

\vspace{2mm}

\item $\partial _{x_1}^{\frac{N+3}{2}}V_\de(0)=(\frac{N+3}{2})!\cdot \frac14$ for $N\geq 3$ and $\partial _{x_1}^2V_\delta(0)=1-\frac\de2$ if $N=1$;

\vspace{2mm}

\item $\|V_\delta\|_{C^m(\overline B_1)}\leq C \quad \forall m\in\N$;

\vspace{2mm}

\item${\beta}_{N+1}^\delta=\frac{1-\de}{4}$ and $\alpha_{N+1}^\de=\frac14$;  consequently
$\frac{|{\beta}_{N+1}^\delta|}{|\alpha_{N+1}^\de|}\to 1^- \hbox{ as }\de\to 0^+$.
\end{itemize}

\vspace{2mm}

Therefore, for each fixed $\delta>0$, we may apply Theorem \ref{th5} with $V=V_\delta$, obtaining   a family of simple blow-up solutions to problem \eqref{eq0}--\eqref{eqq0} of the form 
$$v_\la=-2\log\big(\hat\la+\big|x^{N+1}- \hat\la^{\frac12}\xi^*\big|^2\big)+o(1)$$   with 
$${\cal F}(\xi^*)=-\frac\pi4\frac{\beta_{N+1}^\de}{\alpha_{N+1}^\de}.$$
Notice that both the family $v_\la $ and the vector $\xi^*$ depend on the chosen parameter $\de>0$. To emphasize this dependence, we denote them by $v_\la^\de$ and  $\xi_\de^*$, respectively.

Let $\de_k\to 0^+$ be any sequence of positive numbers. Then there exists  a decreasing sequence  $\la_k\to 0^+$ such that  $v_{\la_k}^{\de_k}$ solves \eqref{eq0}--\eqref{eqq0} and $$\big|v_{\la_k}^{\de_k}(x)+2\log\big(\hat\la_k+\big|x^{N+1}-\hat\la_k^\frac12\xi_{\de_k}^*\big|^2\big)\big|\leq \frac{1}{k}.$$
and, in addition,  $$\hat\la_k^\frac12|\xi_{\de_k}^*|<\frac1k.$$
We define $V_k=V_{\de_k}$, $v_k=v_{\la_k}^{\de_k}$ and $\xi_k^*=\xi_{\de_k}^*$. Then we clearly have 
$$|{\cal F}(\xi^*_k)|=\frac\pi4\frac{| \beta_{N+1}^{\de_k}|}{|\alpha_{N+1}^{\de_k}|}\to {\frac\pi4}^{-}$$
which implies, according to Lemma \ref{fibrafibrafibra}, $$|\xi_k^*|\to +\infty.$$
Moreover, choosing 
 $x_k$ to be an $(N+1)$-th root of $\hat\la_k^\frac12\xi_{k}^*$, so that $x_k^{N+1}=\hat\la_k^\frac12\xi^*_k$, we obtain  $x_k\to 0$ and $$\begin{aligned}v_k(x_k) +2(N+1)\log|x_k|+\log\la_k&=2\log |\xi_k^*|+\log(8(N+1)^2)+O\Big(\frac1k\Big)\to +\infty.
\end{aligned} $$Therefore, $v_k$ reveals a non-simple blow-up profile according to \eqref{nons}. This concludes the proof of Theorem \ref{mest}. 

\begin{lemma}\label{fibra} Let $N\geq  1$ and $0\leq i<\frac32(N+1)$. Let $f:\R^2\to\R^2$ be the vector field defined as $$f(\xi)=\intr|z+\xi|^{\frac{2i}{N+1}}\frac{z}{(1+|z|^2)^3}dz.$$
Then $f\in C^1(\R^2)$, $f(0)=0$  and $$D f(0)=\frac{\Gamma_{2i} }{8(N+1)}\left(\begin{aligned}&1&0\\&0&1\end{aligned}\right) $$ where the numbers $\Gamma_{2i}$ have been defined in \eqref{gammai}.  
\end{lemma}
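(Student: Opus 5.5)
The plan is to prove the three claims in order: regularity, $f(0)=0$, and the formula for $Df(0)$ by differentiating under the integral sign. Write $s:=\frac{2i}{N+1}\in[0,3)$, so that
\[
f(\xi)=\intr |z+\xi|^{s}\,\frac{z}{(1+|z|^2)^3}\,dz .
\]
After the substitution $w=z+\xi$ this becomes
\[
f(\xi)=\intr |w|^{s}\,\frac{w-\xi}{(1+|w-\xi|^2)^3}\,dw .
\]
In this form all the $\xi$-dependence sits in the smooth kernel $g(w,\xi)=\frac{w-\xi}{(1+|w-\xi|^2)^3}$. This avoids the non-smoothness of $|z+\xi|^{s}$ at $z=-\xi$, which matters when $s<1$.

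\emph{Regularity and $f(0)=0$.} The derivatives satisfy $|\partial_\xi g(w,\xi)|\le C(1+|w-\xi|)^{-6}$, and for $|\xi|\le R$ this is bounded by $C_R(1+|w|)^{-6}$. Since $|w|^{s}(1+|w|)^{-6}$ is integrable on $\R^2$ for $s<4$, dominated convergence gives $f\in C^1(\R^2)$, with
\[
\partial_{\xi_j} f(\xi)=\intr |w|^{s}\,\partial_{\xi_j} g(w,\xi)\,dw .
\]
For $f(0)=0$ one uses that the integrand $|z|^{s}z(1+|z|^2)^{-3}$ is odd, so its integral vanishes. The integrability check here is $|z|^{s+1-6}$ at infinity, which requires $s<4$; this holds because $s<3$.

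\emph{Computing $Df(0)$.} The cleanest route is to undo the substitution and differentiate in the original variables, since there $|z+\xi|^{s}$ is differentiable in $\xi$ for $z\neq-\xi$. Its derivative is
\[
\partial_{\xi_j}|z+\xi|^{s}\big|_{\xi=0}=s|z|^{s-2}z_j ,
\]
which is locally integrable because $s-1>-2$. This gives
\[
\partial_{\xi_j}f_k(0)=s\intr |z|^{s-2}\,\frac{z_jz_k}{(1+|z|^2)^3}\,dz .
\]
The justification is either dominated convergence, via a bound on the difference quotients of $|z+\xi|^{s}$ by $C(|z|^{s-1}+|\xi|^{s-1}+1)$ locally, or agreement with the formula from the $w$-representation after integrating by parts. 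By symmetry the off-diagonal entries vanish, and each diagonal entry equals
\[
\frac{s}{2}\intr\frac{|z|^{s}}{(1+|z|^2)^3}\,dz=\frac{i}{N+1}\intr\frac{|z|^{s}}{(1+|z|^2)^3}\,dz .
\]
Comparing with \eqref{gammai}, which defines $\Gamma_{2i}=8i\intr |y|^{s}(1+|y|^2)^{-3}\,dy$, this diagonal entry is exactly $\frac{\Gamma_{2i}}{8(N+1)}$, as claimed.

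\emph{Main obstacle.} The only delicate point is justifying differentiation under the integral near the singular point $z=-\xi$ when $0<s<1$. The plan is to handle this through the $w$-representation, where only the smooth kernel depends on $\xi$. There one integrates by parts the identity
\[
\partial_{\xi_j}g=-\partial_{w_j}g ,
\]
moving the derivative onto $|w|^{s}$; the boundary terms vanish because of the decay of $g$ and because $|w|^{s-1}$ is integrable near $0$. The result is the same formula, and this also gives continuity of $Df$.
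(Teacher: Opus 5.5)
Your argument is correct and is essentially the paper's: oddness gives $f(0)=0$ and the vanishing off-diagonal entries, and rotational symmetry turns each diagonal entry into $\frac{i}{N+1}\intr |z|^{\frac{2i}{N+1}}(1+|z|^2)^{-3}\,dz=\frac{\Gamma_{2i}}{8(N+1)}$; your translation $w=z+\xi$ additionally supplies the $C^1$ justification that the paper's proof leaves implicit. Two minor slips, with $s=\frac{2i}{N+1}$: absolute integrability of $|z|^{s+1}(1+|z|^2)^{-3}$ requires $s<3$ (exactly the hypothesis $i<\frac32(N+1)$), not $s<4$; and for $0<s<1$ the term $|\xi|^{s-1}$ in your difference-quotient bound blows up as $\xi\to0$, so it is not a uniform dominating function, and you should rely on the integration-by-parts route there (or on the bound $C|z|^{s-1}$, which does hold).
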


\begin{proof}

Since $f$ is odd, we have $f(0)=0$. We first compute the Jacobian matrix at $\xi=0$. 
The mixed terms vanish:
$$
\partial_{\xi_1}f_2(0)=\partial_{\xi_2}f_1(0)=\frac{2i}{N+1}\int_{\mathbb{R}^2} z_1 z_2 |z|^{2\frac{i-N-1}{N+1}} (1+|z|^2)^{-3} dz = 0,
$$
since the integrand is odd with respect to $z_1$ or $z_2$.
Moreover, by rotational symmetry,
$$
\partial_{\xi_1}f_1(0)=\partial_{\xi_2}f_2(0)=\frac{i}{N+1} \intr |z|^{\frac{2i}{N+1}} \frac{1}{(1+|z|^2)^3}\,dz.$$

\end{proof}

\begin{lemma}\label{fibrafibrafibra} Let  ${\cal F}:\R^2\to\R^2$ be the vector field defined as $${\cal F}(\xi)=\intr|z+\xi|
\frac{z}{(1+|z|^2)^3}dz.$$
Then:

\begin{itemize}
\item ${\cal F}\in C^1(\R^2)$ and ${\rm det}D{\cal F}(\xi)\neq	 0$ for all $\xi\in\R^2$; 
\item ${\cal F}$ is rotation-equivariant and  $|{\cal F}(\xi)|$ is radially symmetric and strictly increasing in $|\xi|$;
\item $\lim_{|\xi|\to +\infty}|{\cal F}(\xi)|=\frac\pi4$;
\item ${\cal F}$ is injective and its image is the open disk of radius  $\frac\pi4$: ${\cal F}(\R^2)=\big\{|\xi|<\frac\pi4\big\}.$

\end{itemize}
\end{lemma}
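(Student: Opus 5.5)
The plan is to reduce everything to a scalar profile, using the symmetries of the weight $\rho(z):=(1+|z|^2)^{-3}$, and then to read off all four items from one monotonicity computation.

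\emph{Regularity and equivariance.} Since $\xi\mapsto|z+\xi|$ is Lipschitz with gradient $(z+\xi)/|z+\xi|$ (defined for a.e.\ $z$ and bounded by $1$), and $|z|\rho\in L^1(\R^2)$, dominated convergence lets us differentiate under the integral. This gives
$$D{\cal F}(\xi)=\intr \frac{z\otimes (z+\xi)}{|z+\xi|}\rho(z)\,dz .$$
Continuity of this matrix in $\xi$ again follows from dominated convergence, because the singular set $\{z=-\xi\}$ is null. Hence ${\cal F}\in C^1(\R^2)$.

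For equivariance, take any orthogonal matrix $R$ (rotation or reflection). The change of variables $z=Rw$ yields ${\cal F}(R\xi)=R\,{\cal F}(\xi)$. Now apply this with $R$ the reflection across the line $\R\xi$: it fixes $\xi$, so ${\cal F}(\xi)$ is fixed by it and is therefore parallel to $\xi$. Consequently
$${\cal F}(\xi)=h(|\xi|)\,\frac{\xi}{|\xi|},\qquad h(s):=\intr |z+se_1|\,z_1\,\rho(z)\,dz ,$$
and $h(0)=0$ because the integrand is odd in $z_1$.

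\emph{Monotonicity (main step).} Differentiating,
$$h'(s)=\intr \frac{(z_1+s)\,z_1}{|z+se_1|}\rho\,dz .$$
I will pair $z=(z_1,z_2)$ with $(-z_1,z_2)$ and integrate over $z_1>0$ only. With $\psi_a(t)=t/\sqrt{t^2+a^2}$, this gives
$$h'(s)=\int_{z_1>0} z_1\rho\,\big[\psi_{z_2}(s+z_1)+\psi_{z_2}(z_1-s)\big]\,dz .$$
The function $\psi_{z_2}$ is odd and increasing. For $z_1>0$ and $s\ge0$ we have $s+z_1>s-z_1=-(z_1-s)$, hence $\psi_{z_2}(s+z_1)>\psi_{z_2}(s-z_1)=-\psi_{z_2}(z_1-s)$. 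So the bracket is strictly positive for a.e.\ $z$, and therefore $h'(s)>0$ for every $s\ge0$. In particular $h$ is strictly increasing and $h(s)>0$ for $s>0$.

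\emph{Jacobian.} In the frame $(\xi/|\xi|,\ \xi^\perp/|\xi|)$ the polar form of ${\cal F}$ shows that $D{\cal F}(\xi)$ is diagonal with entries $h'(s)$ and $h(s)/s$, where $s=|\xi|>0$. Both are positive. At $\xi=0$ we have $D{\cal F}(0)=h'(0)\,{\rm Id}$ with $h'(0)=\intr z_1^2|z|^{-1}\rho\,dz>0$. Hence $\det D{\cal F}(\xi)>0$ for all $\xi$.

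\emph{Limit and image.} Since $\intr z_1\rho\,dz=0$, we can write
$$h(s)=\intr\big(|z+se_1|-s\big)z_1\rho\,dz .$$
The factor $|z+se_1|-s$ tends to $z_1$ pointwise as $s\to\infty$ and is bounded by $|z|$. By dominated convergence, $h(s)\to\intr z_1^2\rho\,dz=\frac\pi4$.

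Thus $h$ is a strictly increasing bijection $[0,\infty)\to[0,\frac\pi4)$. Combined with the polar form of ${\cal F}$, this shows that ${\cal F}$ is injective, that $|{\cal F}(\xi)|=h(|\xi|)$ is radial and strictly increasing in $|\xi|$, and that ${\cal F}(\R^2)=\{|\xi|<\frac\pi4\}$.

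The only delicate point is the sign of $h'$. The pairing argument above is the step I expect to carry the proof; everything else is dominated convergence.
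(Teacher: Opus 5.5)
Your proposal is correct and follows essentially the paper's proof: the same polar reduction $\mathcal{F}(\xi)=h(|\xi|)\,\xi/|\xi|$ (the paper's $r\phi(r)$), the same pairing argument showing $h'(s)>0$, and $\det D\mathcal{F}(\xi)=h'(|\xi|)\,h(|\xi|)/|\xi|>0$ for $\xi\neq 0$. The differences are minor. Your reflection argument explicitly justifies why $\mathcal{F}(\xi)$ is parallel to $\xi$; the paper invokes only rotation-equivariance, which by itself would not exclude a tangential component. You also obtain the limit $\pi/4$ by subtracting $s\int z_1(1+|z|^2)^{-3}\,dz=0$ and applying dominated convergence, whereas the paper integrates by parts against $\partial_{z_1}(1+|z|^2)^{-2}$.
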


\begin{proof}
We first compute the Jacobian matrix at $\xi=0$. 
The mixed terms vanish:
$$
\partial_{\xi_1}{\cal F}_2(0)=\partial_{\xi_2}{\cal F}_1(0)=\int_{\mathbb{R}^2} z_1 z_2 |z|^{-1} (1+|z|^2)^{-3} dz = 0,
$$
since the integrand is odd with respect to $z_1$ or $z_2$.
Moreover, by rotational symmetry,
$$
\partial_{\xi_1}{\cal F}_1(0)=\partial_{\xi_2}{\cal F}_2(0)=\frac{1}{2} \intr |z| \frac{1}{(1+|z|^2)^3}\,dz$$
and we immediately get $${\rm det}\, D{\cal F}(0)>0.$$
For every rotation $\theta\in\R$, performing the change of variables $z \mapsto e^{{\rm i}\theta}z$ gives
$$
\mathcal{F}(e^{{\rm i}\theta}\xi) = e^{{\rm i}\theta} \mathcal{F}(\xi).
$$
Hence $\mathcal{F}$ is rotation-equivariant, and therefore there exists a scalar function $\phi:(0,\infty)\to\mathbb{R}$ such that
\beq\label{fib}
\mathcal{F}(\xi) = \phi(|\xi|)\,\xi.
\eeq
By choosing $\xi=(r,0)$ in \eqref{fib}, we write
$$
\phi(r)
=
\frac{1}{r}
\int_{\mathbb{R}^2}
|z+(r,0)|\,\frac{z_1}{(1+|z|^2)^3}\,dz.
$$
Split $\mathbb{R}^2$ into the regions $\{z_1>0\}$ and $\{z_1<0\}$, and perform the change of variables $z\mapsto -z$ in the second  integral. This yields
$$
\phi(r)
=
\frac{1}{r}
\int_{z_1>0}
\frac{z_1}{(1+|z|^2)^3}
\left(
|z+(r,0)| - |z-(r,0)|
\right) dz\quad \forall r>0.
$$
If $z_1>0$, then $|z+(r,0)| > |z-(r,0)|$. 
Hence
$$
\phi(r) > 0 \quad \text{for all } r>0.
$$
Differentiating under the integral sign (justified by dominated convergence), we obtain
$$
\phi(r)+r\phi'(r)=\frac{d}{dr}\big(r\phi(r)\big)
=
\int_{\mathbb{R}^2}
|z+(r,0)|^{-1}
\frac{(z_1+r)z_1}{(1+|z|^2)^3}
\,dz.
$$
Repeating the same symmetry argument as before, this can be rewritten as
$$\phi(r)+r\phi'(r)=
\int_{z_1>0}
\frac{z_1}{(1+|z|^2)^3}
\Big[
|z+(r,0)|^{-1}(z_1+r)
+
|z-(r,0)|^{-1}(z_1-r)
\Big] dz.
$$
Since   $|z+(r,0)|^{-1}(z_1+r)
>
|z-(r,0)|^{-1}(r-z_1)$ if $z_1>0$,  the term in brackets is  positive. Hence  \beq\label{moment}(r\phi(r))'=\phi(r)+r\phi'(r) > 0.\eeq Then by \eqref{fib}  $$
\det D\mathcal{F}(\xi)
=
\phi(|\xi|)\,(\phi(|\xi|)+|\xi|\phi'(|\xi|))
>0
\quad \text{for all } \xi \neq 0.
$$
 
Consequently $|{\cal F}(\xi)|=\phi(|\xi|)|\xi|$ is  strictly increasing in $|\xi|$ by \eqref{moment}. Hence, the rotational equivariance of ${\cal F}$ implies that ${\cal F}$ is injective. 
Moreover, using integration by parts,  $$\begin{aligned}
r\phi(r)&=\intr |z+(r,0)|\frac{z_1}{(1+|z|^2)^3} dz=-\frac14\intr |z+(r,0)|\partial_{z_1}\bigg(\frac{1}{(1+|z|^2)^2}\bigg) dz\\ &=\frac14\intr \frac{z_1+r}{|z+(r,0)|}\frac{1}{(1+|z|^2)^2} dz\longrightarrow  \frac14\intr\frac{1}{(1+|z|^2)^2}dz=\frac\pi4\end{aligned}$$ as $r\to +\infty$, and  therefore
$$\{r\phi(r)\,|\, r\geq 0\}=\Big[0,\frac\pi4\Big).$$  
Finally we  derive $$|F(\xi)|=|\xi|\phi(|\xi|)\longrightarrow  \frac\pi4\hbox{ as }|\xi|\to +\infty$$
and 
the characterization of the image of ${\cal F}$ follows immediately   $${\cal F}(\R^2)=\{ r\phi(r) e^{{\rm i}\theta}\,|\,r\geq 0 , \,\theta\in \R\}=\Big\{\xi\,\Big|\,|\xi|< \frac\pi4\Big\}.$$ 

\end{proof}

\appendix
\renewcommand{\theequation}{\Alph{section}.\arabic{equation}}

\section{}

We compute explicit formulas for the quantities $\Gamma_{2m}$ and collect some integral identities used throughout the paper.  

\begin{lemma} 
\label{finalaux} Let $N,m\in\N$ with $N\geq 1$ and $m\leq N+1$. Then the following holds: $$\Gamma_{2m}=\left\{\begin{aligned}&4\pi^2\frac{m^2(N+1-m)}{(N+1)^2}\frac{1}{\sin(\frac{\pi m}{N+1})}&\hbox{ if }&m\leq N\\ &4\pi m& \hbox{ if }&m=N+1\end{aligned}\right. $$ 
where the numbers $\Gamma_{2m}$ have been defined in \eqref{gammai}.
\end{lemma}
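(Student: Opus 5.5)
We must compute $\Gamma_{2m}=8m\int_{\R^2}|y|^{\frac{2m}{N+1}}(1+|y|^2)^{-3}\,dy$ for $m\le N+1$. The plan is to pass to polar coordinates and reduce to a Beta integral. Set $s=\frac{m}{N+1}\in(0,1]$. Then
$$\Gamma_{2m}=16\pi m\int_0^\infty \frac{r^{2s+1}}{(1+r^2)^3}\,dr .$$
With the substitution $t=r^2$ this becomes
$$\Gamma_{2m}=8\pi m\int_0^\infty\frac{t^{s}}{(1+t)^3}\,dt=8\pi m\,B(s+1,2-s)=8\pi m\,\frac{\Gamma(s+1)\Gamma(2-s)}{\Gamma(3)}.$$
Here we use the standard identity $\int_0^\infty t^{a-1}(1+t)^{-a-b}\,dt=B(a,b)$ with $a=s+1$ and $b=2-s$. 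This is valid since $0<s<2$, which is exactly the range assumed in \eqref{gammai}.

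For the case $m=N+1$ we have $s=1$. Then $\Gamma(2)\Gamma(1)/2=1/2$, which gives $\Gamma_{2m}=4\pi m$ directly. Alternatively, one can check that $\int_0^\infty t(1+t)^{-3}\,dt=\frac12$ by elementary integration.

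For $m\le N$ we have $0<s<1$. We write $\Gamma(s+1)=s\Gamma(s)$ and $\Gamma(2-s)=(1-s)\Gamma(1-s)$, and then apply Euler's reflection formula $\Gamma(s)\Gamma(1-s)=\pi/\sin(\pi s)$. This yields
$$\Gamma_{2m}=4\pi m\,s(1-s)\frac{\pi}{\sin(\pi s)}=4\pi^2\frac{m^2(N+1-m)}{(N+1)^2}\,\frac{1}{\sin\big(\frac{\pi m}{N+1}\big)},$$
once we substitute $s=m/(N+1)$ and $1-s=(N+1-m)/(N+1)$. As a consistency check, letting $s\to1$ recovers $4\pi m$, because $(1-s)\pi/\sin(\pi s)\to1$.

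There is no real obstacle in this argument. The only points requiring care are the prefactor bookkeeping ($8m\cdot2\pi$, followed by the factor $\frac12$ from $dt=2r\,dr$) and the correct identification of the Beta parameters. If one prefers to avoid the Beta function, the integral can instead be computed by a contour/Mellin argument or by integrating by parts twice down to $\int_0^\infty t^{s-1}(1+t)^{-1}\,dt=\pi/\sin(\pi s)$. In that route the factors $s(1-s)/2$ appear from the two integrations by parts.
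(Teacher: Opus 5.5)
Your proposal is correct and follows essentially the same route as the paper: the paper also reduces $\Gamma_{2m}=8m\int_{\R^2}|y|^{\frac{2m}{N+1}}(1+|y|^2)^{-3}dy$ to a Beta-function integral, obtaining $\frac{\pi^2}{2}\frac{m(N+1-m)}{(N+1)^2}\frac{1}{\sin(\frac{\pi m}{N+1})}$ for $m\le N$, and treats $m=N+1$ by direct integration. Your version supplies the details the paper leaves implicit, namely the substitution $t=r^2$, the parameters $B(s+1,2-s)$, and the reflection formula, and all of your constants check out.
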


\begin{proof} 
The integrals $\Gamma_{2m}$ belong to the general family $$I_{2m,k,N}=\intr |y|^{\frac{2m}{N+1}}\frac{1}{(1+|y|^2)^k} dy,\quad 0\leq \frac{2m}{N+1}<2(k-1).$$  The general formula for this integral is expressed in terms of the Beta function. 
In particular, for $k=3$ and $m\leq N$
$$I_{2m,3,N}=\intr |y|^{\frac{2m}{N+1}}\frac{1}{(1+|y|^2)^3} dy=\frac{\pi^2}{2}\frac{m(N+1-m)}{(N+1)^2}\frac{1}{\sin(\frac{\pi m}{N+1})}.$$

Taking into account that 
$$\Gamma_{2m}=8m I_{2m,3,N}, 
$$ we obtain the claimed formula for $m\leq N$. For $m=N+1$ the value of $\Gamma_{2m}=\Gamma_{2N+2}$ can be computed by direct integration. 
\end{proof}

\

Next, we derive some integral identities associated with the change of variable $x\mapsto x^{N+1}$ which appears frequently when dealing with $(N+1)$-symmetric functions. \begin{lemma}\label{copy} Let $N\in\N$ and let $\zeta:\R^2\to \R$ be  a continuous function such that   $|x|^{2N} \zeta(x^{N+1})\in L^1(\R^2)$.
Then 
$$\intr |x|^{2N} \zeta(x^{N+1}) dx=\frac{1}{N+1}\intr \zeta(y)dy.$$
\end{lemma}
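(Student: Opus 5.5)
The identity follows from the change of variables $y=x^{N+1}$ in complex notation. This map is an $(N+1)$-to-one covering of $\C\setminus\{0\}$ onto itself, and the origin has measure zero. I would check the formula first for nonnegative continuous $\zeta$, where Tonelli's theorem lets us handle possibly infinite integrals freely. The general case then follows by splitting $\zeta=\zeta^+-\zeta^-$: integrability of $|x|^{2N}\zeta(x^{N+1})$ gives finiteness of both pieces on the left, and hence, by the nonnegative case, on the right.

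The cleanest route is polar coordinates. Write $x=re^{{\rm i}\theta}$, so that $x^{N+1}=r^{N+1}e^{{\rm i}(N+1)\theta}$. Then
$$\intr |x|^{2N}\zeta(x^{N+1})\,dx=\int_0^\infty\!\!\int_0^{2\pi} r^{2N+1}\zeta\big(r^{N+1}e^{{\rm i}(N+1)\theta}\big)\,d\theta\,dr .$$

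First substitute $\varphi=(N+1)\theta$ in the angular integral. Since $\theta\mapsto\zeta(\cdot\,e^{{\rm i}(N+1)\theta})$ runs over $N+1$ full periods on $[0,2\pi]$, the inner integral equals $\int_0^{2\pi}\zeta(r^{N+1}e^{{\rm i}\varphi})\,d\varphi$, with the factor $\frac{1}{N+1}$ from $d\theta=d\varphi/(N+1)$ cancelled by the $N+1$ periods.

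Next substitute $\rho=r^{N+1}$, so that $d\rho=(N+1)r^N\,dr$ and
$$r^{2N+1}\,dr=r^{N+1}\cdot r^N\,dr=\frac{\rho}{N+1}\,d\rho .$$
The integral becomes
$$\frac{1}{N+1}\int_0^\infty\!\!\int_0^{2\pi}\zeta(\rho e^{{\rm i}\varphi})\,\rho\,d\varphi\,d\rho=\frac1{N+1}\intr\zeta(y)\,dy .$$

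An equivalent global argument uses the holomorphic map $x\mapsto x^{N+1}$. Its real Jacobian is $|(N+1)x^N|^2=(N+1)^2|x|^{2N}$, and by the area formula the left-hand side equals $\frac{1}{(N+1)^2}\cdot(N+1)\intr\zeta$, the multiplicity being $N+1$.

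There is no real obstacle here. The only points needing care are the bookkeeping of the factor $N+1$, which arises from both the periodicity and the radial substitution, and the justification of Fubini/Tonelli through the reduction to nonnegative $\zeta$ described above.
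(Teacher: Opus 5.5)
Your proof is correct and follows the paper's argument: polar coordinates with the substitution $(\rho',\theta')=(\rho^{N+1},(N+1)\theta)$, where the factor $N+1$ from covering $N+1$ angular periods cancels the angular Jacobian and the radial Jacobian supplies the remaining $\frac{1}{N+1}$. Your reduction to $\zeta\geq 0$ via Tonelli justifies the Fubini step and shows $\zeta\in L^1$, which the paper leaves implicit; the area-formula check is an alternative derivation of the same identity. Otherwise the route is the paper's.
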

\begin{proof}We use polar coordinates $x = (\rho,\theta)$ and perform the change of variables $(\rho',\theta')=(\rho^{N+1},(N+1)\theta)$. Then 
$$\begin{aligned} \intr |x|^{2N} \zeta(x^{N+1}) dx&=\int_0^{+\infty} d\rho\int_0^{2\pi} \rho^{2N+1}\zeta(\rho^{N+1} e^{{\rm i}(N+1) \theta}) d\theta
\\ &= \frac{1}{(N+1)^2}\int_0^{+\infty} d\rho'\int_0^{2(N+1) \pi}  \rho'\zeta(\rho' e^{{\rm i}\theta'}) d\theta' \\ &=\frac{1}{N+1}\int_0^{+\infty} d\rho'\int_0^{2\pi}  \rho'\zeta(\rho' e^{{\rm i}\theta'}) d\theta'
\\& =\frac{1}{N+1}\intr \zeta(y) dy.\end{aligned}$$

\end{proof}

\begin{lemma}\label{copy0}
Let $\ell,\,N\in\N$ with  $\frac{\ell}{N+1}\not\in \N$  and let $\zeta:\R^2\to\R$ be a continuous  $\frac{2\pi}{N+1}$-symmetric function, i.e. $\zeta(y e^{{\rm i}\frac{2\pi}{N+1}})=\zeta(y)$, such that $\zeta(y)|y|^{\ell}\in L^1(\R^2)$. Then 
$$ \intr  
\zeta(y) \Re (y^{\ell})dy=\intr   \zeta(y)
\Im (y^{\ell})dy=0.$$ As a corollary, if $\zeta(y)=\zeta(|y|)$ is radially symmetric, then $\intr \zeta(|y|)\Re (y^\ell)dy=\intr \zeta(|y|)\Im (y^\ell)dy=0$ for all $\ell\in\N$. 
\end{lemma}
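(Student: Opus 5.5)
The plan is to exploit the $\frac{2\pi}{N+1}$-rotational invariance of $\zeta$ together with the covariance of $y^\ell$ under rotation. I would set $\omega:=e^{{\rm i}\frac{2\pi}{N+1}}$ and consider the complex-valued integral
$$I:=\intr \zeta(y)\,y^{\ell}\,dy,$$
which is absolutely convergent since $|\zeta(y) y^\ell|=|\zeta(y)||y|^\ell\in L^1(\R^2)$. Its real and imaginary parts are exactly the two integrals in the statement, so it suffices to show $I=0$.

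The key step is the change of variables $y=\omega z$, which is a rotation and hence has unit Jacobian. Using $\zeta(\omega z)=\zeta(z)$, this gives
$$I=\intr \zeta(\omega z)\,\omega^{\ell} z^{\ell}\,dz=\omega^{\ell}\intr \zeta(z)z^\ell\,dz=\omega^\ell I .$$
Hence $(1-\omega^\ell)I=0$. Since $\omega^\ell=e^{{\rm i}\frac{2\pi\ell}{N+1}}=1$ exactly when $\frac{\ell}{N+1}\in\N\cup\{0\}$, the hypothesis $\frac{\ell}{N+1}\notin\N$ (with $\ell\in\N$, so $\ell\ge1$) yields $\omega^\ell\neq1$, and therefore $I=0$. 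Taking real and imaginary parts gives both identities.

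For the corollary, I would observe that a radial $\zeta$ is $\frac{2\pi}{m}$-symmetric for every $m\in\N$. Given $\ell\in\N$, I would apply the main statement with $N+1$ replaced by any integer $m>\ell$, for instance $m=\ell+1$; then $\frac{\ell}{m}\notin\N$. Alternatively, one can use the rotation by an arbitrary angle $\theta$ to get $I=e^{{\rm i}\ell\theta}I$ for all $\theta$. There is no real obstacle here. The only points to check are integrability, which justifies the change of variables, and the root-of-unity condition, so the argument is essentially a one-line symmetry computation.
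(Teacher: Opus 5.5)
Your proof is correct and is essentially the paper's argument: both use the invariance $\zeta(\omega y)=\zeta(y)$ to obtain $I=\omega^{\ell}I$. The paper then averages over $k=0,\dots,N$ and uses $\sum_{k=0}^{N}\omega^{k\ell}=0$, while you conclude directly from $\omega^{\ell}\neq 1$; your treatment of the radial corollary (replacing $N+1$ by $\ell+1$, or rotating by an arbitrary angle) also fills in a step the paper leaves implicit.
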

\begin{proof}
Let $\omega = e^{\frac{2\pi i}{N+1}}$. By the invariance of $\zeta$ under the rotation $y \mapsto \omega^k y$, we have
$$
\int_{\mathbb{R}^2} \zeta(y)\, y^\ell \, dy
= \int_{\mathbb{R}^2} \zeta(\omega^k y)\, (\omega^k y)^\ell \, dy
= \omega^{k\ell} \int_{\mathbb{R}^2} \zeta(y)\, y^\ell \, dy.
$$
Averaging over $k=0,\dots,N$ gives
$$
\int_{\mathbb{R}^2} \zeta(y)\, y^\ell \, dy
= \frac{1}{N+1} \sum_{k=0}^N \omega^{k\ell}
\int_{\mathbb{R}^2} \zeta(y)\, y^\ell \, dy.
$$
Since
$$
\sum_{k=0}^N \omega^{k\ell} = 0 \quad \text{if } \frac{\ell}{N+1} \notin \mathbb{N},
$$
it follows that
$$
\int_{\mathbb{R}^2} \zeta(y)\, y^\ell \, dy = 0.
$$
Taking real and imaginary parts yields the claim.
\end{proof}

\end{document}